\theoremstyle{plain}
\newtheorem{lemma}{Lemma}[section]
\newtheorem{theorem}[lemma]{Theorem}
\newtheorem{corollary}[lemma]{Corollary}
\newtheorem{proposition}[lemma]{Proposition}
\newtheorem{definition}[lemma]{Definition}
\theoremstyle{remark}
\newtheorem{remark}{Remark}
\newcommand*  {\N} {{\mathbb N}}
\newcommand*  {\Z} {{\mathbb Z}}
\newcommand{\Nb}{\mathbb{N}}
\newcommand{\Eb}{\mathbb{E}}
\newcommand{\Pb}{\mathbb{P}}
\newcommand{\Fb}{\mathbb{F}}
\newcommand{\Fc}{\mathcal{F}}
\newcommand{\Fct}{\left( \mathcal{F}_t \right)_{t \geq 0}}
\newcommand{\Uc}{\mathscr{U}}
\newcommand{\Xc}{\mathcal{X}}
\newcommand{\Sc}{\mathcal{S}}
\newcommand{\bx}{\boldsymbol{x}}
\newcommand{\bk}{\boldsymbol{k}}
\newcommand{\hook}{\hookrightarrow}	
\newcommand*{\rom}[1]{\expandafter\@slowromancap\romannumeral #1@}
\numberwithin{equation}{section}
\begin{document}

\vskip 0.125in

\title[Stochastic Primitive Equations]
{Local martingale solutions and pathwise uniqueness for the Three-dimensional stochastic inviscid primitive equations}

\date{\today}
%\thanks{\textit{ }}

\author[R. Hu]{Ruimeng Hu}
\address[R. Hu]
{	Department of Mathematics \\
Department of Statistics and Applied Probability \\
     University of California  \\
	Santa Barbara, CA 93106, USA.} \email{rhu@ucsb.edu}

\author[Q. Lin]{Quyuan Lin*}\thanks{*Corresponding author. Department of Mathematics, University of California,	Santa Barbara, CA 93106, USA. E-mail address: quyuan\_lin@ucsb.edu}
\address[Q. Lin]
{	Department of Mathematics \\
     University of California  \\
	Santa Barbara, CA 93106, USA.} \email{quyuan\_lin@ucsb.edu}

\begin{abstract}
We study the stochastic effect on the three-dimensional inviscid primitive equations (PEs, also called the hydrostatic Euler equations). Specifically, we consider a larger class of noises than multiplicative noises, and work in the analytic function space due to the ill-posedness in Sobolev spaces of PEs without horizontal viscosity. Under proper conditions, we prove the local existence of martingale solutions and pathwise uniqueness. By adding vertical viscosity, {\it i.e.}, considering the hydrostatic Navier-Stokes equations, we can relax the restriction on initial conditions to be only analytic in the horizontal variables with Sobolev regularity in the vertical variable, and allow the transport noise in the vertical direction. We establish the local existence of martingale solutions and pathwise uniqueness, and show that the solutions become analytic in the vertical variable instantaneously as $t>0$ and the vertical analytic radius increases as long as the solutions exist. 
\end{abstract}

\maketitle

MSC Subject Classifications: 35Q86, 60H15, 76M35, 35Q35, 86A10\\

Keywords: stochastic primitive equations; local martingale solutions; pathwise uniqueness; hydrostatic Euler equations; hydrostatic Navier-Stokes equations

\section{Introduction} 
In this paper, we consider the following three-dimensional stochastic primitive equations (PEs), which serves as one of the fundamental models in geophysical fluid dynamics, \begin{subequations}\label{PE-system}
\begin{align}
    &d V + (V\cdot \nabla V + w\partial_z V - \nu_h \Delta V - \nu_z \partial_{zz} V +f_0 V^\perp + \nabla p + f) dt = \sigma (V) dW , \label{PE-1}  
    \\
    &\partial_z p  =0 , \label{PE-2}
    \\
    &\nabla \cdot V + \partial_z w =0,  \label{PE-3} 
\end{align}
\end{subequations}
where the horizontal velocity field $V=(u, v)$, the vertical velocity $w$, and the pressure $p$ are the unknown quantities. The non-negative constants $\nu_h$ and $\nu_z$ are horizontal and vertical viscosities, and we denote by $\nabla = (\partial_x, \partial_y)$ 
and $\Delta = \partial_{xx} + \partial_{yy}$. The parameter $f_0 \in \mathbb{R}$ stands for the speed and direction of rotation in the Coriolis force, and $V^\perp = (-v, u)$. The term $\sigma (V) dW$ represents the external forcing term driven by white noise, and $f$ is the other external force. The three-dimensional (3D) viscous PEs are derived by performing a formal asymptotic limit of the small aspect ratio (the ratio of the depth or the height to the horizontal length scale) from the Rayleigh-B\'enard (Boussinesq) system. This limit is justified rigorously in \cite{azerad2001mathematical,li2019primitive,li2022primitive}. Notice that we have omitted the coupling with the temperature and salinity in \eqref{PE-system} to focus our attention towards difficulties arising from the ill-posedness in Sobolev spaces when $\nu_h=0$ \cite{renardy2009ill}.

To address the well-posedness of \eqref{PE-system} in the inviscid case, {\it i.e.}, $\nu_h=\nu_z=0$, one needs to overcome the following difficulties. We shall also discuss some key issues of our work below and explain how the restrictions can be relaxed when considering only vertical viscosity. 
\begin{enumerate}
    \item The first difficulty is that, even in the deterministic case, the PEs with $\nu_h=0$ is ill-posed in Sobolev spaces and Gevrey class of order $s>1$. For this reason, one has to work in the analytic function space. To the best of our knowledge, very little work has been done for stochastic partial differential equations in the analytic function space, as most of the results work in the classical Sobolev spaces.
    
    \item Unlike the stochastic PEs with $\nu_h>0$ where the global existence of solutions can be proven, we can only show the local existence of martingale solutions. The main reason is that, in the deterministic case, the solutions to the PEs with $\nu_h=0$ are unknown to exist globally when $\nu_z>0$, and have been shown to form singularity in finite time when $\nu_z=0$ \cite{cao2015finite,collot2021stable,ibrahim2021finite,wong2015blowup}. 
    
    \item Compared to the viscosity which can give ``strong dissipation'' by providing one gain in the derivative, the analytic framework only gives ``weak dissipation'' by providing one-half gain in the derivative. Therefore, we can not consider transport noise in any direction for the inviscid case, which requires one gain in the derivative. When we consider the vertical viscosity in \eqref{PE-system}, we can allow the existence of vertical transport noise. On the other hand, thanks to the ``weak dissipation'' effect from the analytic framework, one can consider the noises in a class that is larger than multiplicative noises; see Remark~\ref{remark:noise} for more details.
    
    \item In \cite{brzezniak2021well,debussche2011local,saal2021stochastic} where $\nu_h >0$, the existence of both martingale solutions (weak solution in the stochastic sense) and pathwise solutions (strong solution in the stochastic sense) has been shown. The authors first showed the existence of martingale solutions and the pathwise uniqueness to a modified system, then used a Yamada-Watanabe type argument (see \cite[Proposition 2.2]{debussche2011local} or \cite{gyongy1996existence}) and applied a suitable stopping time to establish the existence of pathwise solutions to the original system. In fact, the ``strong dissipation'' from the horizontal viscosity can help control the nonlinear estimates. In contrast, the ``weak dissipation'' from the analytic framework is not strong enough to provide the same control on the nonlinear estimates in our case. Consequently, we are only able to show the pathwise uniqueness to the original system, but not to the modified system. This prevents us from showing the existence of pathwise solutions; see Propositions~\ref{prop:pathwise.uniqueness} and \ref{prop:pathwise.uniqueness-viscous} and Remark~\ref{remark:nonlinear-trouble} for more details.
    
    \item For the inviscid PEs, the initial condition needs to be analytic in all directions, and the analytic radii of the solutions in all variables shrink. In terms of the case with vertical viscosity, the initial condition only needs to be analytic in the horizontal variables with Sobolev regularity in the $z$ variable. Moreover, the solutions become analytic in $z$ variable instantaneous when $t>0$, and the analytic radius in the $z$ variable increases with time as long as the solutions exist (see \eqref{tau-gamma}). This result is parallel to the deterministic case \cite{Lin2022effect}, and is due to the effect of vertical viscosity.
\end{enumerate}

System \eqref{PE-system} is usually studied under some physical boundary conditions. For example, one considers the physical domain to be $\mathbb T^2 \times [0,H]$ and
\begin{equation}\label{BC-physical}
    V \text{ is periodic in }  (x,y) \text{ with period }  1,  \qquad (\partial_z V,w)|_{z=0,H} = 0.
\end{equation}
Here the condition on $\partial_z V$ vanishes when $\nu_z = 0$. 

In this paper, instead of $\mathbb T^2 \times [0,H]$, we consider the domain to be $\mathbb T^3$, and
\begin{equation}\label{BC-T3}
    V \text{ is periodic in }  (x,y,z) \text{ with period }  1,  \qquad V  \text{ is even in }  z  \text{ and }  w  \text{ is odd in }  z.
\end{equation}
Observe that the space of periodic functions with such symmetry condition is invariant under the dynamics of system \eqref{PE-system}. If $H=\frac{1}{2}$, a solution to system \eqref{PE-system} in $\mathbb{T}^3$ subject to \eqref{BC-T3} restricted on the horizontal channel $\mathbb T^2 \times [0,H]$ is a solution to system \eqref{PE-system} subject to the physical boundary conditions \eqref{BC-physical}. The same simplification has been done in \cite{ghoul2022effect}. Working in $\mathbb{T}^3$ allows us to use Fourier analysis, and makes the mathematical presentation simpler and more elegant.

\noindent {\bf Main results}. The following two theorems are the main results of this paper, which concern the local existence of martingale solutions and pathwise uniqueness of the 3D stochastic hydrostatic Euler equations \eqref{PE-inviscid-system-abstract} and 3D stochastic hydrostatic Navier-Stokes equations \eqref{PE-viscous-system-abstract}.

\begin{theorem}[Stochastic hydrostatic Euler]\label{theorem:inviscid}
Suppose that $\mu_0$ satisfies \eqref{condition:mu-zero} with constant $M>0$. 
Let $\rho \geq M$, $\tau_0>0$, and $r>\frac52$ be fixed. Assume and that the noise $\sigma$ and the external force $f$ satisfy \eqref{noise-inviscid} and \eqref{force-inviscid}, respectively. Then there exist a time $T$, a function $\tau(t)$, and a stopping time $\eta$, defined in \eqref{T}, \eqref{equation:tau}, and \eqref{stopingtime:eta}, respectively, such that there exists a local martingale solution $(\Sc, W, V, \eta\wedge T)$ to system \eqref{PE-inviscid-system-abstract} in the sense of Definition~\ref{definition:inviscid-solution}. Moreover, system \eqref{PE-inviscid-system-abstract} also has pathwise uniqueness. 
\end{theorem}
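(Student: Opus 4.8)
The plan is to establish the two assertions of Theorem~\ref{theorem:inviscid} separately: first the local existence of a martingale solution via a vanishing-viscosity (or Galerkin) approximation together with stochastic compactness, and then pathwise uniqueness via a direct energy estimate in the analytic norm on the difference of two solutions.

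\textbf{Step 1: Approximation scheme and uniform a~priori estimates.} I would regularize \eqref{PE-inviscid-system-abstract}, either by adding artificial horizontal viscosity $\ee\Delta V$ (parabolic regularization) or by projecting onto the span of the first $N$ Fourier modes, and establish the global (or locally-in-time, up to the stopping time $\eta$) well-posedness of the regularized system by standard SPDE theory. The crucial point is to derive estimates that are \emph{uniform} in the regularization parameter, controlling $\Eb\sup_{t\le\eta\wedge T}\norm{V(t)}{\tau(t)}{2}$ and $\Eb\int_0^{\eta\wedge T}(\text{dissipation-in-}\tau)\,ds$ in the time-dependent analytic norm. Here the ``weak dissipation'' coming from the decreasing analytic radius $\tau(t)$ (so that $\dot\tau<0$ produces a term $-\abs{\dot\tau}\norm{A^{1/2}V}{\tau(t)}{2}$, giving one-half gain in derivative) must dominate the nonlinear term $\ang{V\cdot\nabla V+w\pp_z V, V}$ in the analytic norm; this is precisely where the choices of $T$, $\tau(t)$, and the stopping time $\eta$ (as specified in \eqref{T}, \eqref{equation:tau}, \eqref{stopingtime:eta}) are calibrated. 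The noise and forcing contributions are handled by the Burkholder--Davis--Gundy inequality together with the assumptions \eqref{noise-inviscid} and \eqref{force-inviscid}, which are designed to be subordinate to this half-derivative gain.

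\textbf{Step 2: Tightness and passage to the limit.} With the uniform bounds in hand, I would show tightness of the laws of the approximate solutions on an appropriate path space — something like $C([0,T];H^{r'}_{\text{weak}})\cap L^2(0,T;H^{r'})$ for $r'<r$, or a space built from the analytic scale with a slightly smaller radius — using the Aldous or Simon-type compactness criterion, with the stochastic time-increment estimate again obtained from BDG. By Prokhorov and Skorokhod, pass to a subsequence converging almost surely on a new probability space $\Sc$ together with a limiting Wiener process $W$; identify the limit $V$ as a martingale solution in the sense of Definition~\ref{definition:inviscid-solution}, the only delicate identification being the nonlinear and stochastic integral terms, which is routine once the convergence is strong enough in the relevant spaces. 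Lower semicontinuity of the analytic norm under weak convergence gives the correct regularity class for $V$ up to the stopping time $\eta\wedge T$.

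\textbf{Step 3: Pathwise uniqueness.} Given two solutions $V_1,V_2$ on the same stochastic basis with the same $W$, set $\overline V=V_1-V_2$ and apply the It\^o formula to $\norm{\overline V(t)}{\tau(t)}{2}$ in the analytic norm. The difference of the nonlinear terms is estimated by splitting into $\overline V\cdot\nabla V_1+w_2\pp_z\overline V$ and symmetric pieces; the dangerous terms involving a full derivative on $\overline V$ are absorbed by the half-derivative gain $-\abs{\dot\tau}\norm{A^{1/2}\overline V}{\tau(t)}{2}$ exactly as in Step~1, while the Lipschitz assumption on $\sigma$ in \eqref{noise-inviscid} controls the martingale term's quadratic variation by $\norm{\overline V}{\tau(t)}{2}$ plus a term again absorbable by the dissipation. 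A Gr\"onwall argument then forces $\overline V\equiv 0$ up to $\eta\wedge T$. I expect the main obstacle throughout to be Step~1/Step~3 in tandem: closing the analytic-norm energy estimate for the \emph{quadratic} nonlinearity of the hydrostatic Euler system — in particular handling $w\pp_z V$, where $w=-\int_0^z\nabla\cdot V\,dz'$ loses a horizontal derivative and is nonlocal in $z$ — so that it is genuinely controlled by only a half-derivative of dissipation; this is the reason one must work on a random time interval and the reason (as the authors note in item~(iv) of the introduction and Remark~\ref{remark:nonlinear-trouble}) that uniqueness can be proved only for the original system and not a modified one.
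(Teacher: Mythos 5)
Your overall architecture — Galerkin (or viscous) approximation, uniform analytic-norm estimates with the shrinking radius $\tau(t)$ producing a half-derivative gain, stochastic compactness via Prokhorov/Skorokhod, identification of the limit, and then pathwise uniqueness via a Gr\"onwall argument in the analytic norm — matches the paper's high-level strategy, and you correctly identify the half-derivative gain from $\dot\tau<0$ as the engine that absorbs the nonlinearity. You even anticipate (via the reference to Remark~\ref{remark:nonlinear-trouble}) that uniqueness can only be closed for the original system. But there are two gaps in Steps 1--2 that would make the argument as written not close.

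First, and most seriously, you omit the cut-off function. The paper does not pass to the limit in the Galerkin approximation of \eqref{PE-inviscid-system-abstract} directly; it first introduces the modified system \eqref{PE-inviscid-system-modified}, where the nonlinear term $Q(V,V)$ is premultiplied by $\theta_\rho(\|V\|_{\tau,r})$ with $\theta_\rho$ a smooth cut-off as in \eqref{eqn:rho}. This is essential: it caps the nonlinearity so that the Galerkin solutions $V_n$ of the modified system exist, and satisfy the estimates of Lemma~\ref{lemma:estimate-inviscid}, on a \emph{fixed deterministic} interval $[0,T]$ uniformly in $n$, without any $n$-dependent stopping times. Only after the Skorokhod limit produces a martingale solution $\tilde V$ of the \emph{modified} system on $[0,T]$ does one introduce the stopping time $\eta$ (Corollary~\ref{cor:loc.mart.sol}), on which the cut-off is $\equiv 1$ and hence $\tilde V$ solves the original system. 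Your Step~1 proposes controlling $\Eb\sup_{t\le\eta\wedge T}$ and $\Eb\int_0^{\eta\wedge T}$ directly, but $\eta$ in \eqref{stopingtime:eta} is defined in terms of the \emph{limit} $\tilde V$, which is not available at the approximation stage; if instead you tried stopping times $\eta_n$ adapted to each $V_n$, you would be facing a sequence of random time intervals shrinking with $n$, and the tightness argument would not go through on a fixed $[0,T]$. The cut-off is precisely the device that removes this obstruction.

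Second, you gloss over where the compactness actually lives. The analytic spaces $\mathcal D_{\tau(t),r}$ are time-dependent (the radius $\tau(t)$ shrinks), and neither the Aubin--Lions--Simon lemma nor the $W^{\alpha,p}$-compactness criterion (Lemma~\ref{lemma:aubin-lions}) applies directly in a scale of time-varying spaces. The paper's fix is to pass to $U_n := e^{\tau(t) A}V_n$, which lives in the \emph{fixed} Sobolev spaces $\mathcal D_{0,r}$, do all the compactness in the path space $\Xc_U = L^2(0,T;\mathcal D_{0,r})\cap C([0,T];\mathcal D_{0,r-\frac32})$, and then recover $\tilde V = e^{-\tau A}\tilde U$ after the limit. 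Your suggestion of working in $C([0,T];H^{r'}_{\mathrm{weak}})\cap L^2(0,T;H^{r'})$ for $r'<r$ or ``a space built from the analytic scale with a slightly smaller radius'' does not resolve this: a smaller but still time-dependent radius has the same issue, while throwing away the analytic weight entirely loses the regularity needed to pass the nonlinearity to the limit. Finally, a smaller point: you offer parabolic regularization by $\epsilon\Delta V$ as an alternative to Galerkin. That route is not wrong in principle, but it carries its own burden (uniform analytic estimates for the parabolic system, and compatibility of the viscous smoothing with the decaying radius), and the paper's Galerkin-only scheme is the cleaner choice here since one gets finite-dimensional It\^o calculus for free.
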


With the help of the vertical viscosity in the stochastic hydrostatic Navier-Stokes equations, we can relax the restriction on initial conditions to be only analytic in the horizontal variables with Sobolev regularity in the vertical variable, and show similar local existence of martingale solutions and pathwise uniqueness in the following theorem.

\begin{theorem}[Stochastic hydrostatic Navier-Stokes]\label{theorem:viscous}
Suppose that $\mu_0$ satisfies \eqref{condition:mu-zero-viscous} with constant $M>0$. 
Let $\rho \geq M$, $\tau_0>0$, and $r>\frac52$ be fixed, and let $\gamma_0 = 0$. Assume and that the noise $\sigma$ and the external force $f$ satisfy \eqref{noise-viscous} and \eqref{force-viscous}, respectively. Then there exist a time $T$, functions $\tau(t)$ and $\gamma(t)$, and a stopping time $\eta$, defined in \eqref{T-viscous}, \eqref{tau-gamma}, and \eqref{stopingtime:eta-viscous}, respectively, such that there exists a local martingale solution $(\Sc, W, V, \eta\wedge T)$ to system \eqref{PE-viscous-system-abstract} in the sense of Definition~\ref{definition:viscous-solution}. Moreover, system \eqref{PE-viscous-system-abstract} also has pathwise uniqueness.
\end{theorem}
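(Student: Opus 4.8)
The plan is to prove Theorem~\ref{theorem:viscous} by the now-standard three-layer scheme for stochastic PDEs in analytic-type spaces, but with the crucial twist that the vertical direction carries only Sobolev regularity at time zero and gains analyticity through the parabolic smoothing of $\nu_z\partial_{zz}$. First I would set up a family of Galerkin (spectral) truncations of the abstract system \eqref{PE-viscous-system-abstract}, projecting onto finitely many Fourier modes in all three variables and regularizing the nonlinearity if needed; the resulting finite-dimensional SDEs have global-in-time solutions, and I would use the structural assumptions \eqref{noise-viscous} and \eqref{force-viscous} together with the symmetry/invariance of \eqref{BC-T3} to derive \emph{uniform} a priori bounds. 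The key energy estimate is carried out in the time-dependent analytic norm associated to the radius pair $(\tau(t),\gamma(t))$ defined in \eqref{tau-gamma}, where $\tau(t)$ decreases (the horizontal radius shrinks, as in the inviscid case) while $\gamma(t)$ increases from $\gamma_0=0$; the point is that the term $\dot\gamma(t)$ times the $z$-derivative weight in $\frac{d}{dt}$ of the analytic norm is dominated by the genuine dissipation $\nu_z\|\partial_z(\cdots)\|^2$, so the ``half-gain'' weak dissipation in the horizontal variables is supplemented by a ``full-gain'' strong dissipation in $z$. Tracking the nonlinear terms $V\cdot\nabla V$ and $w\partial_z V$ in this norm (using that $w$ is recovered from $\nabla\cdot V$ via \eqref{PE-3}, hence costs one horizontal derivative, absorbed by $\dot\tau$), one closes the estimate up to a stopping time $\eta$ of the form \eqref{stopingtime:eta-viscous} on which the analytic norm stays below a fixed multiple of its initial size, and up to the deterministic time $T$ of \eqref{T-viscous}.

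The second layer is compactness and the martingale construction. With the uniform bounds in hand, I would show tightness of the laws of the Galerkin solutions on a suitable path space — typically $C([0,T];H^{\text{low}})\cap L^2(0,T;X_{\tau,\gamma})$ weak, plus the space of the driving noise — invoking an Aubin--Lions--Simon type lemma together with a stochastic compactness (Prokhorov) argument and fractional-in-time Sobolev bounds on the solutions coming from the SDE structure. Then Skorokhod's representation theorem produces a new probability space $\Sc=(\Omega,\mathcal F,(\mathcal F_t),\Pb)$, a cylindrical Wiener process $W$, and limit processes converging almost surely, and one passes to the limit in the weak (martingale) formulation; the identification of the stochastic integral is the usual martingale-characterization argument (e.g.\ showing the candidate is a martingale with the right quadratic variation). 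This yields the local martingale solution $(\Sc,W,V,\eta\wedge T)$ in the sense of Definition~\ref{definition:viscous-solution}. The instantaneous vertical analyticization and the monotonicity of $\gamma(t)$ are read off directly from the fact that the limit inherits the uniform bound in the $(\tau(t),\gamma(t))$-norm with $\gamma(0)=0$ and $\gamma$ increasing.

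The third layer is pathwise uniqueness, which by the structure of the paper is already available as Proposition~\ref{prop:pathwise.uniqueness-viscous}; I would simply cite it. (Its proof in turn runs an energy estimate for the difference of two solutions in a slightly weaker analytic norm, using the same $\dot\gamma$-versus-$\nu_z$ cancellation and a Gr\"onwall argument on the common stopping-time interval — note, as Remark~\ref{remark:nonlinear-trouble} stresses, this works for the original system but not the modified one, so one does \emph{not} get a Yamada--Watanabe upgrade to pathwise solutions.) Assembling: local existence of martingale solutions from layers one and two, pathwise uniqueness from Proposition~\ref{prop:pathwise.uniqueness-viscous}, gives the theorem.

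The main obstacle I expect is closing the a priori energy estimate in the time-dependent analytic norm: one must show that \emph{every} nonlinear contribution — especially $w\partial_z V$, where $w$ is nonlocal in $z$ and built from a horizontal divergence — is controlled by the combination $\dot\tau(t)$ (horizontal radius loss) plus $\nu_z\|\partial_z V\|^2$ (vertical dissipation) plus lower-order terms, with constants uniform over the Galerkin level. This is where the precise forms of \eqref{tau-gamma}, \eqref{T-viscous}, and the noise condition \eqref{noise-viscous} must be chosen compatibly, and where the delicate interplay between the two weights $\tau$ and $\gamma$ lives; everything else (tightness, Skorokhod, martingale identification) is comparatively routine once this estimate is uniform.
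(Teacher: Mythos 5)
Your outline captures the paper's strategy at a high level, but there is one technical device you gloss over that is in fact essential, and without it your second layer would not close: the function spaces $\mathcal D_{\tau(t),r,\gamma(t),r}$ (your $X_{\tau,\gamma}$) are \emph{time-dependent}, and the Aubin--Lions--Simon lemma (Lemma~\ref{lemma:aubin-lions}) requires a fixed triple of Banach spaces. The paper resolves this by conjugating the Galerkin solution, setting $U_n = e^{\tau(t)A_h}e^{\gamma(t)A_z} V_n$; then $U_n$ lives in the ordinary, time-independent Sobolev spaces $\mathcal D_{0,r}$, $\mathcal D_{0,r-\frac32}$, etc., and the equation for $U_n$ (system \eqref{PE-viscous-system-galerkin-U}) acquires the extra linear drift terms $-\dot\tau A_h U_n - \dot\gamma A_z U_n$, which are harmless precisely because $\tau$ and $\gamma$ are chosen affine in $t$. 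Tightness, Prokhorov, and Skorokhod are then all run for $U_n$ (Corollary~\ref{cor:u-viscous}, Proposition~\ref{prop:approximating.sequence-viscous}), and only at the end is $\tilde V = e^{-\tau A_h}e^{-\gamma A_z}\tilde U$ recovered. As you wrote it, invoking Aubin--Lions directly in $L^2(0,T;X_{\tau(t),\gamma(t)})$ is not a meaningful statement and the compactness step would stall.

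Two smaller points. First, ``regularizing the nonlinearity if needed'' undersells it: the cut-off $\theta_\rho(\|V\|_{\tau,r,\gamma,r})$ is genuinely needed to obtain energy bounds that are \emph{unconditional} up to the deterministic time $T$ in \eqref{T-viscous}; the stopping time $\eta$ of \eqref{stopingtime:eta-viscous} is not part of closing the Galerkin estimate but is introduced only after the limit, to pass from the modified system \eqref{PE-viscous-system-modified} to the original \eqref{PE-viscous-system-abstract}. Your phrasing of ``closing the estimate up to a stopping time $\eta$'' conflates these two steps. Second, for the stochastic integral identification the paper does not use a martingale-characterization argument but instead proves a.s.\ convergence of $\sigma_{n_k}(\tilde V_{n_k}) \to \sigma(\tilde V)$ in $L^2(0,T;L_2(\Uc,\mathcal D_0))$ (using the Lipschitz condition and the Poincar\'e inequality for $I-P_{n_k}$) and then invokes \cite[Lemma 2.1]{debussche2011local} plus Vitali to upgrade convergence of the integrals; your route is a legitimate alternative. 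The remaining ingredients you list (the $\dot\gamma$ vs.\ $\nu_z$ cancellation with the smallness of the noise parameter $\delta$ in \eqref{assumption:delta}, the nonlinear estimate Lemma~\ref{lemma-viscous-type1}, and citing Proposition~\ref{prop:pathwise.uniqueness-viscous} for uniqueness) are consistent with the paper.
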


\noindent{\bf Related literature}. For the deterministic case, the global well-posedness of strong solutions to the 3D PEs with full viscosity was first established in \cite{cao2007global}, and later in \cite{kobelkov2006existence,kukavica2007regularity,hieber2016global}. In \cite{cao2016global,cao2017strong,cao2020global}, the authors showed the global well-posedness of strong solutions to the 3D PEs with only horizontal viscosity. When $\nu_h = 0$, the inviscid PEs ({\it i.e.}, $\nu_z=0$) are called the hydrostatic Euler equations, and with only vertical viscosity ({\it i.e.}, $\nu_z>0$) are called the hydrostatic Navier-Stokes equations. It has been shown that the PEs with $\nu_h=0$ are linearly ill-posed in any Sobolev spaces and Gevrey class of order $s>1$ \cite{renardy2009ill} (see also \cite{han2016ill,ibrahim2021finite}). With only vertical viscosity, to overcome the ill-posedness, one can consider additional weak dissipation \cite{cao2020well}, assume Gevrey regularity with some convex condition \cite{gerard2020well}, or take the initial data to be analytic in the horizontal direction and only Sobolev in the vertical direction without any special structure \cite{paicu2020hydrostatic,Lin2022effect}. For the inviscid case, the ill-posedness can be overcomed by assuming either some special structures (local Rayleigh condition) on the initial data or real analyticity in all directions for general initial data \cite{brenier1999homogeneous,brenier2003remarks,ghoul2022effect,grenier1999derivation,kukavica2011local,kukavica2014local,masmoudi2012h}. While the strong solutions to the PEs with the horizontal viscosity $\nu_h>0$ exist globally in time, it has been shown that smooth solutions to the inviscid PEs can develop singularity in finite time \cite{cao2015finite,collot2021stable,ibrahim2021finite,wong2015blowup}. Whether the smooth solutions to the PEs with only vertical viscosity exist globally or blow up in finite time still remains open.

{On the other hand,} stochastic factors in primitive equations are essential in studying geophysical fluid dynamics. For example, introducing white noise terms to the system could account for numerical and empirical uncertainties. It can also provide probabilistic predictions, which is a range of possible scenarios associated with their likelihoods.  Along this direction, the stochastic PEs with full viscosity were studied by \cite{glatt2008stochastic,glatt2011pathwise} in the 2D case, and by \cite{glatt2008stochastic,brzezniak2021well,debussche2011local,debussche2012global} in the 3D case. In the recent work \cite{brzezniak2021well}, the global well-posedness of strong solutions (strong in both PDE and stochastic sense) was established with multiplicative and transport noise, under smallness assumption on the transport noise. The global existence of solutions is based on the deterministic result. With only horizontal viscosity, global existence and uniqueness of strong solutions have been established in \cite{saal2021stochastic}, where the noise can be transported but only in the horizontal direction, and the global existence of solutions is also based on the deterministic result. The two situations above ($\nu_h>0$) allow the classical Sobolev framework as the PEs with horizontal viscosity are well-posed in Sobolev spaces in the deterministic case.

The rest of the paper is organized as follows. In Section~\ref{section:preliminary}, we introduce the functional settings, the notations, the assumptions, and some preliminary results that will be used throughout the paper. In Section~\ref{section:inviscid}, we analyze the stochastic hydrostatic Euler equations ({\it i.e.}, the inviscid case with  $\nu_h=\nu_z=0$ in \eqref{PE-system}), and prove Theorem~\ref{theorem:inviscid}. Section~\ref{section:viscous} focuses on the stochastic hydrostatic Navier-Stokes equations ({\it i.e.}, the vertically viscous case with $\nu_h=0$ and $\nu_z>0$ in \eqref{PE-system}) where we prove Theorem~\ref{theorem:viscous}. We make conclusive remarks in Section~\ref{sec:conclusion} and provide the detailed nonlinear estimates in the appendix. 

%%%%%%%%%%%%%%%%%%%%%%%%%%%%%%%%%
%%%%%%%%%%%%%%%%%%%%%%%%%%%%%%
%%%%%%%%%%%%%%%%%%%%%%%%%%%%%
\section{Preliminaries}\label{section:preliminary}

The universal constant $C$ appears in the paper may change from line to line. We shall use subscripts to indicate the dependence of $C$ on other parameters, {\it e.g.}, $C_r$ means that the constant $C$ depends only on $r$. 

\subsection{Functional settings} 

Let $\bx:= (\bx',z) = (x_1, x_2, z)\in \mathbb{T}^3$, where $\bx'$ and $z$ represent the horizontal and vertical variables, respectively, and $\mathbb{T}^3$ denotes the three-dimensional torus with unit length. Denote by
$$
    \|f \|:=\|f\|_{L^2(\mathbb{T}^3)} = (\int_{\mathbb{T}^3} |f(\bx)|^2 d\bx)^{\frac{1}{2}},
$$
associated with the inner product
$  \langle f,g\rangle = \int_{\mathbb{T}^3} f(\bx)g(\bx) d\bx
$
for $f,g \in L^2(\mathbb{T}^3)$. For a function $f \in L^2(\mathbb{T}^3)$, let $\hat{f}_{\bk}$ be its Fourier coefficient such that
\begin{equation*}
    f(\bx) = \sum\limits_{\bk\in 2\pi\mathbb{Z}^3} \hat{f}_{\bk} e^{ i\bk\cdot \bx}, \qquad \hat{f}_{\bk} = \int_{\mathbb{T}^3} e^{- i\bk\cdot \bx} f(\bx) d\bx.
\end{equation*}
For $r\geq 0$, we define the following Sobolev $H^r$ norm and $\dot{H}^r$ semi-norm
\begin{eqnarray*}
&&\hskip-.1in
\|f \|_{H^r}:= \Big(\sum\limits_{\bk\in 2\pi\mathbb{Z}^3} (1+|\bk|^{2r}) |\hat{f}_{\bk}|^2  \Big)^{\frac{1}{2}}, \qquad \|f \|_{\dot{H}^r}:= \Big(\sum\limits_{\bk\in  2\pi\mathbb{Z}^3} |\bk|^{2r} |\hat{f}_{\bk}|^2  \Big)^{\frac{1}{2}},
\end{eqnarray*}
and we refer to \cite{adams2003sobolev} for more details about Sobolev spaces.

The divergence-free condition \eqref{PE-3} and the oddness of $w$ in $z$ imply that
\begin{equation*}
    \int_0^1 \nabla \cdot V (\bx',z)dz = 0.
\end{equation*}
Moreover, we consider $V$ even in the $z$ variable. That is, $V\in \mathcal D_0$ where
\begin{equation*}
    \mathcal D_0 := \left\{ f\in L^2(\mathbb{T}^3) : \int_0^1 \nabla \cdot f (\bx',z)dz = 0 ,\, f \text{   is even in } z\right\}.
\end{equation*}

\subsubsection{Inviscid case}
Let $\tau(t) \geq 0$ represent the radius of analyticity, and let $A = \sqrt{-(\Delta + \partial_{zz})}$ and $e^{\tau(t)A}$ be defined by, in terms of the Fourier coefficients, 
\begin{equation*}
    \widehat{(Af)}_{\bk}  := |\bk| \hat{f}_{\bk}, \ \ (\widehat{e^{\tau(t)A}f})_{\bk}  := e^{\tau(t)|\bk|} \hat{f}_{\bk}, \qquad \boldsymbol k \in 2\pi \mathbb Z^3. 
\end{equation*}
In this paper, the argument of $\tau(t)$ is frequently omitted if no confusion arises. 
Then for $\tau, r\geq 0$, $A^rf$ and $e^{\tau A} f$ are given by
\begin{equation*}
    A^rf := \sum\limits_{\bk\in  2\pi\mathbb{Z}^3} |\bk|^r \hat{f}_{\bk} e^{ i \bk\cdot\bx}, \ \ e^{\tau A} f := \sum\limits_{\bk\in  2\pi\mathbb{Z}^3} e^{\tau|\bk|} \hat{f}_{\bk} e^{ i \bk\cdot\bx}.
\end{equation*}
For $r\geq 0$, we define a family of normed spaces, parameterized by $\tau\geq 0$,
\begin{eqnarray*}
\mathcal{D}_{\tau,r} := \{f\in \mathcal D_0: \|f\|_{\tau,r} <\infty  \},
\end{eqnarray*}
where the equipped norm is 
\begin{eqnarray*}
\|f\|_{\tau,r}:=\|e^{\tau A} f\|_{H^r} = \Big(\sum\limits_{\bk\in  2\pi\mathbb{Z}^3} (1+|\bk|^{2r}) e^{2\tau |\bk} |\hat{f}_{\bk}|^2 \Big)^{\frac{1}{2}}, \label{analytic-norm}
\end{eqnarray*}
and the corresponding semi-norm is 
\begin{eqnarray*}
\|A^r e^{\tau A} f\| = \|e^{\tau A} f\|_{\dot{H}^r} =  \Big(\sum\limits_{\bk\in  2\pi\mathbb{Z}^3} |\bk|^{2r} e^{2\tau |\bk|} |\hat{f}_{\bk}|^2\Big)^{\frac{1}{2}}. \label{anlytic-semi-norm}
\end{eqnarray*}
It is easy to see that
\begin{eqnarray}\label{inviscid-norm-equi}
\|A^r e^{\tau A} f\|^2 + \|f\|^2 \leq \| f\|_{\tau,r}^2 \leq  2(\|A^r e^{\tau A} f\|^2 + \|f\|^2).
\end{eqnarray}
Notice that when $\tau=0$, one has $\mathcal D_{0,r} = H^r \cap \mathcal  D_0.$
\begin{remark}
The space of analytic functions is defined as ({\it cf.} \cite{levermore1997analyticity})
\begin{eqnarray*}
G^1(\mathbb{T}^3) = \bigcup\limits_{\tau >0} \mathcal{D}_{\tau,r},
\end{eqnarray*}
which is a special case of the Gevrey class \cite{ferrari1998gevrey,foias1989gevrey,levermore1997analyticity}. 
Since inviscid PEs are ill-posed in Sobolev spaces and in the Gevrey class $G^s$ of order $s>1$ \cite{ibrahim2021finite,renardy2009ill,han2016ill}, we shall focus on the Gevrey class of order $s=1$, which is the space of analytic function. 
\end{remark}

\subsubsection{Vertically viscous case}
For the vertically viscous case, we consider $V\in \mathcal{D}_{\tau,r,\gamma,s}$ where
\begin{equation*}
    \mathcal{D}_{\tau,r,\gamma,s} := \Big\{ f\in \mathcal D_0, \, \|f\|_{\tau,r,\gamma,s} < \infty \Big\}.
\end{equation*}
The equipped norm is defined by
\begin{gather*}
   \|f\|_{\tau,r,\gamma,s} := \left(\sum\limits_{\boldsymbol{k}'\in  2\pi \mathbb{Z}^2, k_3 \in 2\pi \mathbb Z} \left(1 + |\boldsymbol{k}'|^{2r} + |k_3|^{2s}\right) e^{2\tau |\boldsymbol{k}'|} e^{2\gamma |k_3|} |\hat{f}_{\boldsymbol{k}',k_3}|^2 \right)^{\frac12},
   \end{gather*}
where we denote by 
$$
\boldsymbol{k} = (\boldsymbol{k}',k_3), \qquad \hat{f}_{\boldsymbol{k}} \equiv \hat{f}_{\boldsymbol{k}',k_3} :=\int_{\mathbb T^3} e^{-i\boldsymbol{k}' \cdot \boldsymbol{x}'- ik_3 z} f(\boldsymbol{x}',z)\, d\boldsymbol{x}' dz.
$$
Here $\tau\geq 0$ and $\gamma \geq 0$ represent the horizontal and vertical radius of analyticity, respectively. Notice that when $\tau=\gamma=0$, $\mathcal D_{0,r,0,s} = H_{\boldsymbol{x}'}^r \cap H_z^s \cap \mathcal D_0.$ 
 For $ \bk = (\boldsymbol k' ,k_3)  \in 2\pi (\mathbb Z^2 \times \mathbb Z), ~ \tau,\gamma\geq 0$, let $A_h = \sqrt{-\Delta}$, $A_z=\sqrt{-\partial_{zz}}$, and $e^{\tau A_h}e^{\gamma A_z}$ be defined by, in terms of the Fourier coefficients,
\begin{equation*}
    (\widehat{A_h f})_{\bk} := |\boldsymbol{k}'| \hat{f}_{\bk}, \quad (\widehat{A_z f})_{\bk} := |k_3| \hat{f}_{\bk}, \quad (\widehat{e^{\tau A_h}e^{\gamma A_z} f})_{\bk} = e^{\tau |\boldsymbol{k}'|} e^{\gamma |k_3|} \hat{f}_{\bk}. 
\end{equation*}
Then for $\tau, \gamma, r, s\geq 0$, one has
\begin{equation*}
A_h^r f := \sum\limits_{\bk\in  2\pi\mathbb{Z}^3} |\bk'|^r\hat{f}_{\bk} e^{ i \bk\cdot\bx}, \quad 
A_z^s  f := \sum\limits_{\bk\in  2\pi\mathbb{Z}^3} |k_3|^s  \hat{f}_{\bk} e^{ i \bk\cdot\bx}, \quad
    e^{\tau A_h} e^{\gamma A_z} f := \sum\limits_{\bk\in  2\pi\mathbb{Z}^3}  e^{\tau |\boldsymbol{k}'|} e^{\gamma |k_3|} \hat{f}_{\bk} e^{ i \bk\cdot\bx}.
\end{equation*} 
In particular, we are interested in the case when $s=r$. 
Notice that when $\tau=\gamma=0$ one has $\mathcal D_{0,r,0,r} = \mathcal D_{0,r}= H^r\cap \mathcal D_0.$ Moreover, we have 
\begin{equation*}
\begin{split}
    \| f\|^2 +  \|A^r e^{\tau A_h} e^{\gamma A_z} f\|^2 \leq \|f\|_{\tau,r,\gamma,r}^2 & \leq 2(\| f\|^2 +  \|A^r e^{\tau A_h} e^{\gamma A_z} f\|^2) .
\end{split}
\end{equation*}

\subsection{Assumptions on noise and force terms}\label{assump:noise}
\subsubsection{Inviscid case}
For the inviscid case, for any $\tau\geq0$ and $r\geq0$, let $\sigma: \mathcal D_{\tau,r+\frac12} \rightarrow L_2(\Uc, \mathcal D_{\tau,r})$, and assume that the noise $\sigma$ satisfies the growth conditions and the Lipschitz continuity:
\begin{equation}\label{noise-inviscid}
\begin{split}
    \|\sigma(V)\|^2_{L_2(\Uc, \mathcal D_{\tau,r})} \leq C \Big(1+ \| V\|^2_{\tau,r+\frac12} \Big)&, \\
    \| \sigma(V) - \sigma(V^\#)\|^2_{L_2(\Uc, \mathcal D_{\tau,r})} \leq C  \| V - V^\#\|^2_{\tau,r+\frac12}&,  \qquad V, V^\# \in \mathcal D_{\tau,r+\frac12}.
\end{split}
\end{equation}
where $L_2(X, Y)$ denotes the space of Hilbert-Schmidt operators from $X$ to $Y$ and we defer the precise definition of $\Uc$ to Section~\ref{sec:stochasticprelim}. 

For the force $f$, with $\tau_0>0$ and $r>\frac52$, we assume that
\begin{equation}\label{force-inviscid}
    f \in L^2(0,\infty; \mathcal D_{\tau_0,r}).
\end{equation}

\subsubsection{Vertically viscous case}
For the vertically viscous case, for any $\tau\geq0$, $\gamma\geq 0$, and $r\geq 0$, let $\sigma: \mathcal D_{\tau,r+\frac12,\gamma,r+1} \rightarrow L_2(\Uc, \mathcal D_{\tau,r,\gamma,r})$, and assume that the noise $\sigma$ satisfies the growth condition and the Lipschitz continuity:
\begin{equation}\label{noise-viscous}
\begin{split}
    \|\sigma(V)\|^2_{L_2(\Uc, \mathcal D_{\tau,r+\frac12,\gamma,r+1})} \leq C \Big(1+ \| V\|^2_{\tau,r+\frac12,\gamma,r} \Big) &+ \delta^2  \| V\|^2_{\tau,r+\frac12,\gamma,r+1} ,
    \\
    \| \sigma(V) - \sigma(V^\#)\|^2_{L_2(\Uc, \mathcal D_{\tau,r+\frac12,\gamma,r+1})}
    \leq C  \| V - V^\#\|^2_{\tau,r+\frac12,\gamma,r}  & + \delta^2 \| V - V^\#\|^2_{\tau,r+\frac12,\gamma,r+1},
    \\
    &V, V^\# \in \mathcal D_{\tau,r+\frac12,\gamma,r+1} ,
\end{split}
\end{equation}
with $\delta$ small enough depending on $\nu_z$, as indicated in Lemma \ref{lemma:estimate-viscous} and Proposition  \ref{prop:pathwise.uniqueness-viscous}. 

For the force $f$, with $\tau_0>0$, $\gamma^* \geq \frac{\nu_z\tau_0}8$, and $r>\frac52$, we assume that
\begin{equation}\label{force-viscous}
    f \in L^2(0,\infty; \mathcal D_{\tau_0,r,\gamma^*,r}).
\end{equation}
Notice that $f$ has large enough analytic radius in $z$, which will allow the growth of analytic radius in $z$ for the solutions.

\begin{remark}\label{remark:noise}
 The assumptions on the noise for the inviscid case allow a class of noise that is larger than the set of the multiplicative noises, but does not contain the transport noise. On the other hand, the assumptions for the vertically viscous case allow the transport noise in the vertical direction. In general, if the system has the viscosity in some directions, then the transport noise in that direction is allowed since the viscosity can control the loss of the derivative. See, for example, \cite{brzezniak2021well} where the transport noise in all spatial directions is allowed since they consider full viscosity, and \cite{saal2021stochastic} where the authors consider the transport noise only in the horizontal direction since they only have horizontal viscosity. In our inviscid case, it is therefore natural to exclude transport noise. However, thanks to the framework of analytic functions, we are allowed to consider a class of noise larger than the set of multiplicative noises.
\end{remark}

\subsection{Stochastic preliminaries}\label{sec:stochasticprelim}
Let $\Sc = \left(\Omega, \Fc, \Fb, \Pb\right)$ be a stochastic basis with filtration $\Fb = \Fct$.  Let $\Uc$ be a separable Hilbert space and let $W$ be an $\Fb$-adapted cylindrical Wiener process with reproducing kernel Hilbert space $\Uc$ on $\Sc$. Let $\lbrace e_k \rbrace_{k = 1}^\infty$ be an orthonormal basis of $\Uc$, then $W$ may be formally written as $W = \sum_{k=1}^\infty e_kW^k$, where $W^k$ are independent 1D Wiener processes on $\Sc$.

Let $X$ be another separable Hilbert space and denote by $L_2(\Uc, X)$ the collection of Hilber-Schmidt operators from $\Uc$ into $X$. Given a predictable process $\Phi \in L^2\bigl(\Omega; L^2\left(0, T; L_2\left(\Uc, X \right)\right)\bigr)$, one may define the stochastic integral with respect to the cylindrical Wiener process by
\[
	\int_0^T \Phi \, dW = \sum_{k = 1}^\infty \int_0^T \Phi e_k \, dW^k.
\]
Note that the integral can also be extended to $\Phi$ with $\int_0^T \| \Phi \|^2_{L_2\left( \Uc, X \right)} \, dt < \infty$, $\mathbb{P}$-almost surely, and we refer readers to \cite[Section 4]{da2014stochastic} for more details. 

Let us also recall the definitions of Sobolev spaces with fractional time derivative, see {\it e.g.}\ \cite{simon1990sobolev}. Let $X$ be a separable Hilbert space and let $t > 0$, $p > 1$ and $\alpha \in (0, 1)$. We define
\[
	W^{\alpha, p}(0, t; X) =  \left\{ u \in L^p(0, t; X) \mid \int_0^t \int_0^t \frac{\vert u(s) - u(r) \vert_X^p}{|s - r|^{1+\alpha p}} \, dr \, ds < \infty \right\}
\]
and equip it with the norm
\[
	\| u \|^{p}_{W^{\alpha, p}(0, t; X)} = \int_0^t \vert u(s) \vert^p_X \, ds + \int_0^t \int_0^t \frac{\vert u(s) - u(r) \vert_X^p}{|s - r|^{1+\alpha p}} \, dr \, ds.
\]

In the sequel,  we shall repeated use two versions of the Burkholder-Davis-Gundy inequality. For $\Phi \in L^2\left(\Omega; L^2\left(0, T; L_2\left(\Uc, X\right)\right)\right)$, one has
\begin{equation}
	\label{eq:bdg}
	\Eb \sup_{t \in \left[0, T\right]} \left| \int_0^t \Phi \, dW \right|^r_X \leq C_{r} \, \Eb \left( \int_0^T \| \Phi \|_{L_2(\Uc, X)}^2 \, dt \right)^{r/2}.
\end{equation}
Moreover, if $p \geq 2$ and $\Phi \in L^p\left(\Omega; L^p\left(0, T; L_2\left(\Uc, X\right)\right)\right)$, then
\begin{equation}
	\label{eq:bdg.frac}
	\Eb \left| \int_0^\cdot \Phi \, dW \right|^p_{W^{\alpha, p}(0, T; X)} \leq c_{p} \, \Eb \int_0^T \| \Phi \|_{L_2(\Uc, X)}^p \, dt,
\end{equation}
for $\alpha \in [0, 1/2)$. For proofs, see, for instance, \cite{karatzas2012brownian}  and \cite[Lemma 2.1]{flandoli1995martingale}.

\subsection{Notion of solution}

In this paper, we only consider martingale solutions ({\it i.e.}, weak solutions in the stochastic sense), and adapt the definition from \cite{debussche2011local,brzezniak2021well}.
\subsubsection{Inviscid case}
First, we consider the following inviscid version of system \eqref{PE-system}, 
\begin{subequations}\label{PE-inviscid-system}
\begin{align}
    &d V + (V\cdot \nabla V + w\partial_z V  +f_0 V^\perp + \nabla p + f)dt = \sigma (V) dW , \label{PE-inviscid-1}  
    \\
    &\partial_z p  =0 , \label{PE-inviscid-2}
    \\
    &\nabla \cdot V + \partial_z w =0,  \label{PE-inviscid-3} 
    \\
    &V(0) = V_0. \label{PE-inviscid-ic}
\end{align}
\end{subequations}
From the system above, one can write $P(\bx')=p(\bx',z)$ and $w=-\int_0^z \nabla \cdot V(\bx',\tilde z)d\tilde z$. Denoting by
\begin{equation*}
    Q(g,h) = g\cdot \nabla h - \left(\int_0^z (\nabla \cdot  g) (\tilde z)  d\tilde z \right) \partial_z h, \ \ F(g) :=  f_0 g^{\perp} + \nabla P +  f,
\end{equation*}
we can rewrite \eqref{PE-inviscid-system} as
\begin{subequations}\label{PE-inviscid-system-abstract}
\begin{align}
    &d V + \big[Q( V, V) + F(V) \big]dt = \sigma (V) dW , \label{PE-inviscid-abstract-1}  
    \\
    &V(0) = V_0. \label{PE-inviscid-abstract-ic}
\end{align}
\end{subequations}

We consider the initial data to be given by a Borel measure $\mu_0$ on $\mathcal D_{\tau_0,r}$ such that for some $M\geq 0$,
\begin{equation}\label{condition:mu-zero}
    \mu_0\left(\left\{ V\in \mathcal D_{\tau_0,r}, \Vert V  \Vert_{\tau_0,r} \geq M \right\} \right) = 0.
\end{equation}
In particular, this implies that for any 
$p \geq 1$,
\begin{equation}
	\label{eq:mu.zero}
	\int_{\mathcal D_{\tau_0,r}} \Vert V  \Vert_{\tau_0,r}^p \, d\mu_0(V) < \infty.
\end{equation}
\begin{remark}\label{remark:initial-condition}
 The assumption on the initial condition \eqref{condition:mu-zero} is to guarantee the stopping time \eqref{stopingtime:eta} appearing in Corollary \ref{cor:loc.mart.sol} to be positive almost surely. One could also follow the strategy in \cite{brzezniak2021well,debussche2011local} to weaken such assumption by considering an additional linear differential equation in the modified system \eqref{PE-inviscid-system-modified}. In order to simplify our presentation and focus on the difficulties discussed in Section 1, we take the assumption \eqref{condition:mu-zero} in our paper.
\end{remark}

\begin{definition}\label{definition:inviscid-solution}
Let $r>\frac52$, $T>0$, $\tau(t)\geq 0$ for $t\in[0,T]$, and let $\mu_0$ satisfy \eqref{condition:mu-zero} with some constant $M>0$. Assume that $\sigma$ and $f$ satisfy \eqref{noise-inviscid} and \eqref{force-inviscid}, respectively. We call a quadruple $(\Sc, W, V, \eta\wedge T)$ a \emph{local martingale solution} if $\Sc = \left(\Omega, \Fc, \Fb, \Pb\right)$ is a stochastic basis, $W$ is an $\Fb$-adapted cylindrical Wiener process with reproducing kernel Hilbert space $\Uc$, $\eta$ is an $\Fb$-stopping time and $V\left(\cdot \wedge \eta\right): \Omega \times [0, T] \to \mathcal D_{\tau(t),r}$ is a progressively measurable process such that $\eta > 0$ $\Pb$-a.s.,
\begin{equation*}
	V\left( \cdot \wedge \eta \right) \in L^2\left(\Omega; C\left([0, T], \mathcal D_{\tau(t),r}\right)\right), \qquad \mathds{1}_{[0, \eta]}(\cdot) V \in L^2\left(\Omega; L^2\left(0, T; \mathcal D_{\tau(t),r+\frac12} \right)\right),
\end{equation*}
the law of $V(0)$ is $\mu_0$ and $V$ satisfies the following equality in $\mathcal D_0$ for all $t \in[0,T]$
\begin{equation*}
	V\left(t \wedge \eta\right) + \int_0^{t \wedge \eta} \big[Q(V,V)+F(V)\big] ds = V(0) + \int_0^{t \wedge \eta} \sigma(V) \, dW.
\end{equation*}

\end{definition}

\subsubsection{Vertically viscous case}
For the vertically viscous version of system \eqref{PE-system}, we consider
\begin{subequations}\label{PE-viscous-system-abstract}
\begin{align}
    &d V + \big[Q( V, V) + F(V) - \nu_z \partial_{zz} V\big]dt  = \sigma (V) dW , \label{PE-viscous-1}  
    \\
    &V(0) = V_0, \label{PE-viscous-ic}
\end{align}
\end{subequations}
We consider the initial data to be given by a Borel measure $\mu_0$ on $\mathcal D_{\tau_0,r,0,r}$ such that for some $M>0$,
\begin{equation}\label{condition:mu-zero-viscous}
    \mu_0\left(\left\{ V\in \mathcal D_{\tau_0,r,0,r}, \Vert V  \Vert_{\tau_0,r,0,r} \geq M \right\} \right) = 0.
\end{equation}
In particular, this implies that for any $p \geq 1$,
\begin{equation}
	\label{eq:mu.zero-viscous}
	\int_{\mathcal D_{\tau_0,r,0,r}} \Vert V  \Vert_{\tau_0,r,0,r}^p \, d\mu_0(V) < \infty.
\end{equation}

\begin{definition}\label{definition:viscous-solution}
Let $r>\frac52$, $T>0$, $\tau(t),\gamma(t)\geq 0$ for $t\in[0,T]$, and let $\mu_0$ satisfy \eqref{condition:mu-zero-viscous} with some constant $M>0$. Assume that $\sigma$ and $f$ satisfy \eqref{noise-viscous} and \eqref{force-viscous}, respectively. We call a quadruple $(\Sc, W, V, \eta\wedge T)$ a \emph{local martingale solution} if $\Sc = \left(\Omega, \Fc, \Fb, \Pb\right)$ is a stochastic basis, $W$ is an $\Fb$-adapted cylindrical Wiener process with reproducing kernel Hilbert space $\Uc$, $\eta$ is an $\Fb$-stopping time and $V\left(\cdot \wedge \eta\right): \Omega \times [0, T] \to \mathcal D_{\tau(t),r,\gamma(t),r}$ is a progressively measurable process such that $\eta > 0$ $\Pb$-a.s.,
\begin{equation*}
	\begin{split}
	    	&V\left( \cdot \wedge \eta \right) \in L^2\left(\Omega; C\left([0, T], \mathcal D_{\tau(t),r,\gamma(t),r}\right)\right), 
	    	\\
	    	&\mathds{1}_{[0, \eta]}(\cdot) A^r  V \in L^2\left(\Omega; L^2\left(0, T; \mathcal D_{\tau(t),\frac12,\gamma(t),1}\right)\right),
	\end{split}
\end{equation*}
the law of $V(0)$ is $\mu_0$ and $V$ satisfies the following equality in $\mathcal D_0$ for all $t \in[0,T]$
\begin{equation*}
	V\left(t \wedge \eta\right) + \int_0^{t \wedge \eta} \big[Q(V,V)+F(V) - \nu_z \partial_{zz} V\big] ds = V(0) + \int_0^{t \wedge \eta} \sigma(V) \, dW.
\end{equation*}

\end{definition}

\subsection{Compactness theorem}\label{sec:compactness}

We recall the following compactness result which are needed for our results. For proofs see \cite[Theorem 5]{simon1986compact} and \cite[Theorem 2.1]{flandoli1995martingale}, respectively.

\begin{lemma} 
\label{lemma:aubin-lions}
a) \emph{(Aubin-Lions-Simon Lemma).} Let $X_2 \subset X \subset X_1$ be Banach spaces such that the embedding $X_2 \hook \hook X$ is compact and the embedding $X \hook X_1$ is continuous. Let $p \in (1, \infty)$ and $\alpha \in (0, 1)$. Then the following embedding is compact
\[
	L^p(0, t; X_2) \cap W^{\alpha, p}(0, t; X_1) \hook \hook L^p(0, t; X).
\]

b) Let $X_2 \subset X$ be Banach spaces such that $X_2$ is reflexive and the embedding $X_2 \hook \hook X$ is compact. Let $\alpha \in (0, 1]$ and $p \in (1, \infty)$ be such that $\alpha p > 1$. Then the following embedding is compact
\[
	W^{\alpha, p}(0, t; X_2) \hook \hook C([0, t], X).	
\]
\end{lemma}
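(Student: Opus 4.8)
Both statements are purely functional-analytic, and the plan is to reduce each to a classical compactness criterion — the Riesz–Fr\'echet–Kolmogorov theorem for part a) (compactness in $L^p$ of the time variable) and the Arzel\`a–Ascoli theorem for part b) (compactness in $C$ of the time variable) — using in each case the compact embedding $X_2 \hook\hook X$ to supply ``spatial'' compactness and a quantitative time-regularity estimate to supply equicontinuity in time.

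For part a), I would first record the interpolation (Ehrling-type) inequality: for every $\varepsilon>0$ there is $C_\varepsilon>0$ with $\|u\|_X \le \varepsilon\|u\|_{X_2} + C_\varepsilon\|u\|_{X_1}$ for all $u\in X_2$, which follows from compactness of $X_2\hook\hook X$ and continuity of $X\hook X_1$ by a standard normalized-sequence contradiction argument. Given a bounded set $F\subset L^p(0,t;X_2)\cap W^{\alpha,p}(0,t;X_1)$, it is automatically bounded in $L^p(0,t;X)$, so by the vector-valued Riesz–Kolmogorov criterion it suffices to check (i) uniform vanishing as $h\to 0$ of the time-translates $u(\cdot+h)-u(\cdot)$ in $L^p$ of $X$, uniformly over $u\in F$, and (ii) that suitable local time-averages lie in a fixed compact subset of $X$. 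For (i), apply the interpolation inequality to $u(\cdot+h)-u(\cdot)$ and integrate in time: the $X_2$-part contributes at most $2\varepsilon\|u\|_{L^p(0,t;X_2)}$ while the $X_1$-part is bounded by $C_\varepsilon h^{\alpha}$ times the Gagliardo seminorm of $u$ in $W^{\alpha,p}(0,t;X_1)$; letting $h\to 0$ and then $\varepsilon\to 0$ gives the desired uniform equicontinuity. For (ii), the averages $h^{-1}\int_s^{s+h}u(r)\,dr$ are bounded in $X_2$ uniformly in $u\in F$ and $s$ by H\"older, hence lie in a fixed compact subset of $X$; a mollification argument in time then upgrades (i)–(ii) to precompactness of $F$ in $L^p(0,t;X)$.

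For part b), the key input is the Sobolev–Slobodeckij embedding in the time variable: since $\alpha p>1$, one has $W^{\alpha,p}(0,t;X_2)\hookrightarrow C^{0,\alpha-1/p}([0,t];X_2)$ with a continuous norm estimate (a Morrey / Garsia–Rodemich–Rumsey type bound). Thus a bounded set $F\subset W^{\alpha,p}(0,t;X_2)$ is bounded in $C^{0,\beta}([0,t];X_2)$ with $\beta=\alpha-1/p>0$; composing with the continuous embedding $X_2\hook X$ makes $F$ uniformly $\beta$-H\"older, hence equicontinuous, in $C([0,t];X)$, and for each fixed $s$ the set $\{u(s):u\in F\}$ is bounded in $X_2$, hence precompact in $X$ by compactness of $X_2\hook\hook X$. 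Arzel\`a–Ascoli then yields precompactness of $F$ in $C([0,t];X)$. Reflexivity of $X_2$ enters only if one wants to identify limits of subsequences as elements of $W^{\alpha,p}(0,t;X_2)$, and is otherwise inessential once the H\"older embedding is invoked directly.

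The main obstacle, and the one point requiring genuine care, is step (ii) in part a): the vector-valued Riesz–Kolmogorov criterion demands not merely equicontinuity of translations but also a form of ``tightness in space'', namely that $F$ stays, up to arbitrarily small error, inside a compact subset of $X$ for a.e.\ fixed time; extracting this cleanly from the $X_2$-bound together with $X_2\hook\hook X$ (via time-averaging or mollification) is the technical heart of Simon's argument, whereas part b) is comparatively routine once the fractional Sobolev embedding in time is granted. Since the two statements are quoted essentially verbatim from \cite{simon1986compact} and \cite{flandoli1995martingale}, in the paper I would simply cite those sources; the above indicates the argument one would expand if a self-contained proof were required.
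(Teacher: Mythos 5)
The paper does not prove this lemma; it simply cites \cite[Theorem 5]{simon1986compact} for part a) and \cite[Theorem 2.1]{flandoli1995martingale} for part b), exactly as you anticipate at the end of your proposal, so there is no in-paper argument to compare against. Your sketch faithfully reproduces the strategy of those sources and is essentially correct: for part a), Ehrling's inequality splits $\|u(\cdot+h)-u(\cdot)\|_{L^p(0,t-h;X)}$ into an $\varepsilon$-small $X_2$-contribution and a $C_\varepsilon h^{\alpha}[u]_{W^{\alpha,p}(0,t;X_1)}$ contribution, where the translation estimate follows from inserting an intermediate point $r\in(s,s+h)$, applying H\"older, and multiplying and dividing by $|r-s|^{1+\alpha p}$ in the Gagliardo seminorm; combined with the vector-valued Riesz--Kolmogorov criterion this is precisely the shape of Simon's Theorems 1 and 5. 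For part b), the fractional Sobolev embedding $W^{\alpha,p}(0,t;X_2)\hook C^{0,\alpha-1/p}([0,t];X_2)$ for $\alpha p>1$ followed by Arzel\`a--Ascoli is the content of Flandoli--Gatarek's Theorem 2.1, and your remark that reflexivity of $X_2$ plays no essential role once the H\"older embedding is invoked directly is correct. The one point I would tighten is the formulation of the ``tightness'' condition in step (ii) of part a): the criterion Simon uses is that for each fixed $0<t_1<t_2<t$ the set $\{\int_{t_1}^{t_2}u\,ds:u\in F\}$ is relatively compact in $X$, which follows from the uniform $L^p(0,t;X_2)$ bound and compactness of $X_2\hook\hook X$; stating it in terms of these fixed-interval integrals, rather than local averages at scale $h$, is cleaner and avoids conflating the spatial compactness input with the temporal mollification step.
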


\section{The Inviscid case}\label{section:inviscid}
In this section, we focus on system \eqref{PE-inviscid-system-abstract}. In order to deal with the nonlinear terms in the energy estimates, we use the strategy introduced in \cite{debussche2011local,brzezniak2021well,saal2021stochastic} and consider the following modified system
\begin{subequations}\label{PE-inviscid-system-modified}
\begin{align}
    &d V + \big[\theta_\rho(\|V\|_{\tau,r})Q( V, V) + F(V) \big]dt = \sigma (V) dW , \label{PE-inviscid-modified-1}  
    \\
    &V(0) = V_0, \label{PE-inviscid-modified-ic}
\end{align}
\end{subequations}
where $\rho\geq M$ is a fixed constant with $M$ appearing in the condition \eqref{condition:mu-zero}, and the function $\theta_\rho(x)\in C^\infty(\mathbb R)$ is a non-increasing cut-off function such that 
\begin{equation}\label{eqn:rho}
    \mathbbm{1}_{[-\frac{\rho}2, \frac{\rho}2]} \leq \theta_\rho(x) \leq \mathbbm{1}_{[-\rho, \rho]}.
\end{equation}

The main goal of this section is to prove Theorem~\ref{theorem:inviscid}. For this purpose, we fix $\rho \geq M$, $\tau_0>0$ and $r>\frac52$ through this section. In Sections~\ref{section:galerkin} and \ref{section:energy-estimate-inviscid}, we first work on a Galerkin approximation system of the modified system \eqref{PE-inviscid-system-modified} and derive some necessary energy estimates. In Section~\ref{section:existence-inviscid}, by Prokhorov's Theorem and Skorokhod Theorem, the existence of local martingale solution to the modified system \eqref{PE-inviscid-system-modified} is established. By defining a suitable stopping time, we show the existence of local martingale solutions to the original system \eqref{PE-inviscid-system-abstract}. Finally, in Section~\ref{section:uniqueness-inviscid}, we establish the pathwise uniqueness to the original system \eqref{PE-inviscid-system-abstract}, which completes the proof of Theorem~\ref{theorem:inviscid}.

\subsection{Galerkin scheme}\label{section:galerkin}
We employ the Galerkin approximation procedure. For $\bk\in 2\pi\mathbb{Z}^3$, let
\begin{equation*}
\phi_{\bk} = \phi_{k_1,k_2,k_3} :=
\begin{cases}
\sqrt{2}e^{ i\left( k_1 x_1 + k_2 x_2 \right)}\cos( k_3 z) & \text{if} \;  k_3\neq 0,\\
e^{ i\left( k_1 x_1 + k_2 x_2 \right)} & \text{if} \;  k_3=0,  \label{phik}
\end{cases}
\end{equation*}
and
\begin{eqnarray*}
	&&\hskip-.28in
	\mathcal{E}:=  \{ \phi \in L^2(\mathbb{T}^3) \; | \; \phi= \sum\limits_{\bk\in 2\pi\mathbb{Z}^3} a_{\bk} \phi_{\bk}, \; a_{-k_1, -k_2,k_3}=a_{k_1,k_2,k_3}^{*}, \; \sum\limits_{\bk\in 2\pi\mathbb{Z}^3} |a_{\bk}|^2 < \infty \},
\end{eqnarray*}
where $a^*$ denotes the complex conjugate of $a$. Notice that $\mathcal{E}$ is a closed subspace of $L^2(\mathbb{T}^3)$, and consists of real-valued functions which are even in $z$ variable. For any $n\in \N$, denote by
\begin{eqnarray*}
	&&\hskip-.28in
	\mathcal{E}_n:=  \{ \phi \in L^2(\mathbb{T}^3)\; | \; \phi= \sum\limits_{|\bk|\leq n} a_{\bk} \phi_{\bk}, \; a_{-k_1, -k_2, k_3}=a_{k_1,k_2, k_3}^{*}  \},
\end{eqnarray*}
the finite-dimensional subspaces of $\mathcal{E}$. For any function $f\in L^2(\mathbb{T}^3)$, denote by 
\begin{equation*}
f_{\bk} :=\int_{\mathbb{T}^3} f(\bx)\phi^*_{\bk}(\bx) d\bx,
\end{equation*}
and write $P_n f := \sum_{|\bk|\leq n} f_{\bk} \phi_{\bk}$ for $n\in \N$. Then $P_n$ is the orthogonal projections from $L^2(\mathbb{T}^3)$ to $\mathcal{E}_n$. Moreover, one has the following Poincar\'e inequality:
\begin{equation}\label{poincare}
    \|(I-P_n)f\|^2 = \sum\limits_{|\bk|>n} |f_{\bk}|^2 \leq \frac1{n} \sum\limits_{|\bk|>n} |\bk| |f_{\bk}|^2 =  \frac1{n} \|(I-P_n)f\|_{0,\frac12}^2.
\end{equation}
Denote by 
\begin{equation*}
    Q_n(g , h) :=  P_n Q(g,h), \ \ F_n(g) := P_n F(g), \ \ \sigma_n := P_n \sigma.
\end{equation*}
For each $n\in \N$, we consider the following Galerkin approximation of system \eqref{PE-inviscid-system-modified} at order $n$ as:
\begin{subequations}\label{PE-inviscid-system-galerkin}
\begin{align}
    &d V_n + \big[\theta_\rho(\|V_n\|_{\tau,r} ) Q_n(V_n, V_n) + F_n(V_n) \big]dt   = \sigma_n (V_n) dW , \label{PE-inviscid-galerkin-1}  
    \\
    & V_n(0) = P_n V_0 =: (V_n)_0. \label{PE-inviscid-galerkin-ic}  
\end{align}
\end{subequations}
Since all the terms in system \eqref{PE-inviscid-system-galerkin} are locally Lipschitz, the local existence and uniqueness of the solution $V_n$ is a standard result. 

For some function $\tau(t)$ to be determined later, with initial condition denote by $\tau_0$, {\it i.e.}, $\tau(0) = \tau_0$, we define $U_n = e^{\tau A}V_n$, and therefore, $V_n = e^{-\tau A}U_n$. Since $dU_n = \dot{\tau}A U_n dt + e^{\tau A}dV_n$, system \eqref{PE-inviscid-system-galerkin} is equivalent to
\begin{subequations}\label{PE-inviscid-system-galerkin-U}
\begin{align}
    &d U_n + \big[\theta_\rho(\|U_n\|_{0,r} ) e^{\tau A} Q_n(e^{-\tau A}U_n, e^{-\tau A}U_n) + e^{\tau A}F_n(e^{-\tau A}U_n) -\dot{\tau} A U_n\big]dt   = e^{\tau A}\sigma_n (e^{-\tau A}U_n) dW , \label{PE-inviscid-galerkin-U-1}  
    \\
    & U_n(0) = e^{\tau_0 A }(V_n)_0 = P_n \left(e^{\tau_0 A } V_0\right), \label{PE-inviscid-galerkin-U-ic}  
\end{align}
\end{subequations}
and $U_n = e^{\tau A}V_n$ is the solution to system \eqref{PE-inviscid-system-galerkin-U}.

\subsection{Energy estimates} \label{section:energy-estimate-inviscid}
Now we establish the main estimates needed to pass to the limit in the Galerkin system \eqref{PE-inviscid-system-galerkin} (and equivalently, system \eqref{PE-inviscid-system-galerkin-U}).

\begin{lemma}\label{lemma:estimate-inviscid}
Let $\tau_0>0$, $p\geq 2$, and $r>\frac52$ be fixed. Suppose that $V_0 \in L^p(\Omega; \mathcal D_{\tau_0,r})$ is an $\Fc_0$-measurable random variable. Suppose that $\sigma$ and $f$ satisfy \eqref{noise-inviscid} and \eqref{force-inviscid}, respectively.
Let $V_n$ be the solution to system \eqref{PE-inviscid-system-galerkin}. Then there exist a decreasing function $\tau(t)$ defined in \eqref{equation:tau} and a constant $T$ defined in \eqref{T} such that the following hold:
\begin{enumerate}
	\item $
	  \sup\limits_{n\in \N}	\mathbb E \Big[\sup\limits_{s\in[0,T]} \|V_n\|_{\tau,r}^p +  \int_0^T \|A^r e^{\tau A} V_n\|^{p-2}\|A^{r+\frac12} e^{\tau A}V_n\|^2 ds \Big]  \leq C_p (1+ \mathbb E \|V_0\|_{\tau_0,r}^p) e^{C_p T};
	$
		\item For  $\alpha\in [0,1/2)$, $\int_0^{\cdot} e^{\tau A}\sigma_n(V_n) dW$ is bounded in
	$
		L^p\left( \Omega; W^{\alpha, p}(0, T; \mathcal D_{0,r-1}) \right);	
	$
	\item If moreover $p\geq 4$, then $e^{\tau A}V_n - \int_0^\cdot e^{\tau A}\sigma_n(V_n) dW$ is bounded in 
	$
		L^2\left( \Omega; W^{1, 2}(0, T; \mathcal D_{0,r-1}))\right) .
	$
\end{enumerate}
\end{lemma}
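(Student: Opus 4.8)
The strategy is the standard energy-estimate bootstrap for stochastic Galerkin approximations, carried out in the analytic norm $\|\cdot\|_{\tau,r}$ with a carefully chosen decreasing radius $\tau(t)$. The key algebraic input is the commutator/cancellation structure of the hydrostatic nonlinearity $Q$ in analytic spaces: the "weak dissipation" term $\dot\tau\,\|A^{r+\frac12}e^{\tau A}V_n\|^2$ coming from differentiating $e^{\tau(t)A}$ must absorb the worst part of $\langle A^r e^{\tau A}Q_n(V_n,V_n), A^r e^{\tau A}V_n\rangle$. I would first record (citing the appendix nonlinear estimates) the bound
\[
  \bigl|\langle A^r e^{\tau A} Q(V,V), A^r e^{\tau A} V\rangle\bigr|
  \le C_r \|V\|_{\tau,r}\,\|A^{r+\frac12}e^{\tau A}V\|^2 ,
\]
valid for $r>\frac52$, together with the analogous Lipschitz-type bound. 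Because of the cutoff $\theta_\rho(\|V_n\|_{\tau,r})$, on the support of $\theta_\rho$ the prefactor $\|V_n\|_{\tau,r}$ is $\le\rho$, so this term is dominated by $C_r\rho\,\|A^{r+\frac12}e^{\tau A}V_n\|^2$; choosing $\dot\tau \le -2C_r\rho$ (e.g. $\tau(t)=\tau_0-2C_r\rho\, t$, defining $\tau(t)$ as in \eqref{equation:tau}) makes the sum of this term and $-\dot\tau\|A^{r+\frac12}e^{\tau A}V_n\|^2$ nonnegative, and $T$ in \eqref{T} is taken small enough that $\tau(T)>0$.

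Concretely, I would apply the finite-dimensional Itô formula to $\|V_n(t)\|_{\tau,r}^2 = \|e^{\tau A}V_n\|^2 + \|A^r e^{\tau A}V_n\|^2$ (using \eqref{inviscid-norm-equi}), picking up (i) the drift terms $-2\langle \cdots Q_n\rangle$, $-2\langle\cdots F_n\rangle$, the good term $2\dot\tau\|A^{r+\frac12}e^{\tau A}V_n\|^2$ (with the correct sign after moving it to the right), (ii) the Itô correction $\|e^{\tau A}\sigma_n(V_n)\|_{L_2(\Uc,\mathcal D_{0,r})}^2$, and (iii) the martingale term. The force term is handled by Cauchy--Schwarz and \eqref{force-inviscid} ($f\in L^2(0,\infty;\mathcal D_{\tau_0,r})$ and $\tau(t)\le\tau_0$), the Coriolis term is order-zero and trivially bounded by $\|V_n\|_{\tau,r}^2$, the pressure term vanishes in $\mathcal D_0$ by the divergence-free/even structure (or is bounded analogously), and the Itô-correction and noise terms are controlled by the growth condition in \eqref{noise-inviscid}, which costs a half derivative $\|V_n\|_{\tau,r+\frac12}^2$ — this half derivative is exactly what the leftover fraction of $-\dot\tau\|A^{r+\frac12}e^{\tau A}V_n\|^2$ must also absorb, so I would split $\dot\tau$ into two pieces (one for the nonlinearity, one for the noise), enlarging the constant in \eqref{equation:tau} accordingly. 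Raising to the power $p/2$, taking $\sup_{s\le t}$, applying the Burkholder--Davis--Gundy inequality \eqref{eq:bdg} to the martingale term (and Young's inequality to re-absorb a small multiple of $\mathbb E\sup\|V_n\|_{\tau,r}^p$ and of the $\int\|A^r e^{\tau A}V_n\|^{p-2}\|A^{r+\frac12}e^{\tau A}V_n\|^2$ term), and then Grönwall's inequality, yields part (1) with the constant $C_p(1+\mathbb E\|V_0\|_{\tau_0,r}^p)e^{C_pT}$; note $\mathbb E\|V_0\|_{\tau_0,r}^p<\infty$ by hypothesis and $\|P_nV_0\|_{\tau_0,r}\le\|V_0\|_{\tau_0,r}$.

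For part (2), the integrand $e^{\tau A}\sigma_n(V_n)$ maps into $\mathcal D_{0,r}\hookrightarrow\mathcal D_{0,r-1}$, and by the fractional BDG inequality \eqref{eq:bdg.frac},
\[
  \mathbb E\Bigl|\int_0^\cdot e^{\tau A}\sigma_n(V_n)\,dW\Bigr|^p_{W^{\alpha,p}(0,T;\mathcal D_{0,r-1})}
  \le c_p\,\mathbb E\int_0^T \|e^{\tau A}\sigma_n(V_n)\|_{L_2(\Uc,\mathcal D_{0,r-1})}^p\,dt
  \le c_p\,\mathbb E\int_0^T\bigl(1+\|V_n\|_{\tau,r+\frac12}^2\bigr)^{p/2}dt,
\]
where in the last step I use \eqref{noise-inviscid} and the drop of one derivative to pass from $\mathcal D_{\tau,r}$ to $\mathcal D_{0,r-1}$ (actually one only needs $r-1\le r$, so any $\alpha<1/2$ works); this last expectation is finite by part (1) provided $p\ge2$ (interpolating: $\|V_n\|_{\tau,r+\frac12}^2\le\|V_n\|_{\tau,r}\,\|A^{r+\frac12}e^{\tau A}V_n\|_{(\cdots)}$, or more simply bounding $\int_0^T\|V_n\|_{\tau,r+\frac12}^p\,dt$ by $\sup\|V_n\|_{\tau,r}^{p-2}\int_0^T\|A^{r+\frac12}e^{\tau A}V_n\|^2\cdot(\text{stuff})$ — this is precisely why part (1) controls $\int_0^T\|A^re^{\tau A}V_n\|^{p-2}\|A^{r+\frac12}e^{\tau A}V_n\|^2\,ds$). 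For part (3), write $Z_n:=e^{\tau A}V_n-\int_0^\cdot e^{\tau A}\sigma_n(V_n)\,dW$; from the Galerkin equation \eqref{PE-inviscid-galerkin-U-1} (rewritten for $e^{\tau A}V_n$) one has $\partial_t Z_n = -\theta_\rho(\|V_n\|_{0,r})e^{\tau A}Q_n(V_n,V_n) - e^{\tau A}F_n(V_n) + \dot\tau A(e^{\tau A}V_n)$, and I would bound each term in $\mathcal D_{0,r-1}$: the nonlinear term by $C_r\|V_n\|_{\tau,r}\|V_n\|_{\tau,r+\frac12}$ (with the cutoff giving $\le C_r\rho\|V_n\|_{\tau,r+\frac12}$), the force term by $\|f\|_{\tau_0,r}$, and the $\dot\tau A e^{\tau A}V_n$ term by $C\|A^{r}e^{\tau A}V_n\|$ since $A$ loses exactly one derivative and $r\le r-1+1$. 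Squaring, integrating in time, and taking expectation, all three contributions are finite by part (1) and \eqref{force-inviscid} — the nonlinear one again requires $\mathbb E\int_0^T\|V_n\|_{\tau,r+\frac12}^2\,dt<\infty$, which follows from part (1) with $p\ge4$ via Cauchy--Schwarz ($\mathbb E\int\|V_n\|_{\tau,r+\frac12}^2 \le (\mathbb E\sup\|V_n\|_{\tau,r}^2\int\|A^{r+\frac12}e^{\tau A}V_n\|^2)^{?}$ — more carefully, $p=4$ in part (1) gives exactly $\mathbb E\int_0^T\|A^re^{\tau A}V_n\|^2\|A^{r+\frac12}e^{\tau A}V_n\|^2\,ds<\infty$ and hence control of $\mathbb E(\int_0^T\|V_n\|_{\tau,r+\frac12}^2\,ds)$ after one more Cauchy--Schwarz in $\omega$). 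Taken together this gives the uniform-in-$n$ bound in $L^2(\Omega;W^{1,2}(0,T;\mathcal D_{0,r-1}))$.

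\textbf{Main obstacle.} The delicate point is the balance of half-derivatives: the analytic "dissipation" produced by $\dot\tau$ is only $\|A^{r+\frac12}e^{\tau A}V_n\|^2$, and it must simultaneously absorb the cutoff nonlinear term \emph{and} the $\delta$-free half-derivative loss from the noise growth condition \eqref{noise-inviscid}, while still leaving a positive multiple of $\int\|A^re^{\tau A}V_n\|^{p-2}\|A^{r+\frac12}e^{\tau A}V_n\|^2$ on the left for the conclusion of part (1). This forces the precise choice of the slope in \eqref{equation:tau} (depending on $\rho$, $r$, and the noise constant $C$) and the smallness of $T$ in \eqref{T}; getting the constants to close — especially after raising to the $p/2$ power, where one must carefully track how $\|V_n\|_{\tau,r+\frac12}^p$ is split as $\|V_n\|_{\tau,r}^{p-2}\cdot\|A^{r+\frac12}e^{\tau A}V_n\|^2$ times lower-order factors before applying Young's inequality — is the technical heart of the argument. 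Everything else (BDG, Grönwall, the fractional-time bounds) is routine.
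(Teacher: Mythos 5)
Your proposal follows the same roadmap as the paper's proof: It\^o in the analytic norm, Lemma~\ref{lemma-inviscid} plus the cutoff $\theta_\rho$ to reduce the nonlinear contribution to $C_r\rho\|A^{r+\frac12}e^{\tau A}V_n\|^2$, a choice of $\dot\tau$ (depending on $\rho$, $r$, $p$ and the noise constant) absorbing both this and the half-derivative loss from \eqref{noise-inviscid}, then BDG and Gr\"onwall for (i); fractional BDG for (ii); and the deterministic bound on $e^{\tau A}V_n-\int_0^\cdot e^{\tau A}\sigma_n\,dW$ for (iii). The small variant of applying It\^o to $\|\cdot\|_{\tau,r}^2$ and raising to power $p/2$, rather than applying the finite-dimensional It\^o formula directly to $\|V_n\|^p$ and $\|A^r e^{\tau A}V_n\|^p$ separately as the paper does, is a harmless reformulation.

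There is, however, a real gap in your argument for part (ii). You bound $\|e^{\tau A}\sigma_n(V_n)\|_{L_2(\Uc,\mathcal D_{0,r-1})}\le\|\sigma_n(V_n)\|_{L_2(\Uc,\mathcal D_{\tau,r})}$ and then use \eqref{noise-inviscid} at level $r$, ending up with $(1+\|V_n\|^2_{\tau,r+\frac12})^{1/2}$. Closing this would require $\mathbb E\int_0^T\|V_n\|_{\tau,r+\frac12}^p\,dt<\infty$, which part (i) does \emph{not} supply for $p>2$: part (i) controls $\int\|A^re^{\tau A}V_n\|^{p-2}\|A^{r+\frac12}e^{\tau A}V_n\|^2\,ds$, not $\int\|A^{r+\frac12}e^{\tau A}V_n\|^p\,ds$, and the interpolation you sketch goes the wrong way (it would need control of an even higher derivative). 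The fix is exactly what the paper does implicitly: apply \eqref{noise-inviscid} at level $r-1$ (the hypothesis holds for all $r\ge0$), so that $\|\sigma(V_n)\|^2_{L_2(\Uc,\mathcal D_{\tau,r-1})}\le C(1+\|V_n\|^2_{\tau,r-\frac12})\le C(1+\|V_n\|^2_{\tau,r})$, which is directly controlled by part (i) for any $p\ge2$. A parallel remark applies to the nonlinear term in part (iii), where the paper simply uses the Banach-algebra bound $\|Q(V_n,V_n)\|_{\tau,r-1}\le C\|V_n\|^2_{\tau,r}$ from Lemma~\ref{lemma-banach-algebra} and then $\sup_t\|V_n\|_{\tau,r}^4$, which is why $p\ge4$ is assumed; your route through $\|V_n\|_{\tau,r+\frac12}$ also works but your justification of where $p\ge4$ enters is muddled. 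Finally, a small typo: the cutoff in \eqref{PE-inviscid-galerkin-U-1} is $\theta_\rho(\|U_n\|_{0,r})=\theta_\rho(\|V_n\|_{\tau,r})$, not $\theta_\rho(\|V_n\|_{0,r})$.
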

\begin{proof}
Applying the finite dimensional It\^{o} formula to \eqref{PE-inviscid-galerkin-1} produces, for $p\geq 2$, 
\begin{equation*}
    \begin{split}
        d\| V_n\|^p  = & - p \theta_\rho(\|V_n\|_{\tau,r} ) \|V_n\|^{p-2} \big\langle    Q_n(V_n, V_n) , V_n\big\rangle dt
         - p \|V_n\|^{p-2} \big\langle    F_n (V_n), V_n\big\rangle dt
        \\
        &+\frac p2 \| \sigma_n(V_n)\|^2_{L_2(\Uc, \mathcal D_0)} \|V_n\|^{p-2} dt
        + \frac{p(p-2)}2 \big\langle \sigma_n(V_n),  V_n \big\rangle^2 \|V_n\|^{p-4} dt
        \\
        &+ p \big\langle\sigma_n(V_n),  V_n \big\rangle \|V_n\|^{p-2} dW 
        \\
       := & A_1 dt +  A_2 dt +  A_3 dt +  A_4 dt +  A_5 dW.
    \end{split}
\end{equation*}
Thanks to the periodic boundary condition, integration by parts gives $ A_1=0$. By integration by parts, thanks to the Cauchy-Schwartz inequality, Young's inequality, and \eqref{force-inviscid},  one obtains that
\begin{equation*}
    \begin{split}
       \int_0^t A_2 ds &\leq \int_0^t  p \|V_n\|^{p-2} \Big|\big\langle    F_n (V_n), V_n\big\rangle\Big| ds
       = \int_0^t p \|V_n\|^{p-2} \Big|\big\langle    f, V_n\big\rangle\Big| ds
       \\
       &\leq \int_0^t p \|V_n\|^{p-2} (\|f\|^2 + \|V_n\|^2) ds \leq \frac14 \sup\limits_{s\in[0,t]} \|V_n\|^p + C_p\left(1 + \int_0^t \|V_n\|^p ds\right).
    \end{split}
\end{equation*}
From the assumptions \eqref{noise-inviscid} with $r = \tau = 0$, {\it i.e.}, $\|\sigma(V_n)\|^2_{L_2(\Uc, \mathcal D_0)} \leq C (1+ \|V_n\|^2 + \|A^{\frac12} V_n\|^2 )$, using the Cauchy-Schwartz inequality and Young's inequality yields that
\begin{equation*}
    \int_0^t | A_3|+| A_4| ds \leq C_p \int_0^t (1+ \|V_n\|^2 + \|A^{\frac12} V_n\|^2 ) \|V_n\|^{p-2} ds\leq C_p \int_0^t(1+ \|V_n\|_{\tau,r}^p ) ds.
\end{equation*}
Finally, the Burkholder-Davis-Gundy inequality, together with the Cauchy-Schwartz inequality, Young's inequality, and the property of $\sigma$ in \eqref{noise-inviscid} lead to 
\begin{equation*}
\begin{split}
    \mathbb E \sup\limits_{s\in[0,t]} \Big|\int_0^s A_5 dW \Big| &\leq C_p \mathbb E\Big(\int_0^t \|V_n\|^{2(p-1)} \| \|\sigma_n(V_n)\|^2_{L_2(\Uc, \mathcal D_0)} \Big)^{\frac12}
    \\
    &\leq C_p \mathbb E\Big(\int_0^t \|V_n\|^{2(p-1)}  (1+ \|V_n\|^2 + \|A^{\frac12} V_n\|^2 ) \Big)^{\frac12}
    \\
    &\leq \frac14 \mathbb E \sup\limits_{s\in[0,t]} \|V_n\|^p + C_p\mathbb E \int_0^t (1+ \|V_n\|_{\tau,r}^p) ds .
\end{split}
\end{equation*}

Next, for the seminorm $\|A^r e^{\tau A} V_n\|$, applying the finite dimensional It\^{o} formula yields, for $p\geq 2$, 
\begin{equation*}
    \begin{split}
        &d\|A^r e^{\tau A} V_n\|^p  - p\dot{\tau} \|A^r e^{\tau A}V_n\|^{p-2} \|A^{r+\frac12} e^{\tau A} V_n\|^2 dt
        \\
        = & - p \theta_\rho(\|V_n\|_{\tau,r} ) \|A^r e^{\tau A}V_n\|^{p-2} \big\langle A^r e^{\tau A}    Q_n(V_n, V_n) , A^r e^{\tau A} V_n\big\rangle dt
        \\
        & - p \|A^r e^{\tau A}V_n\|^{p-2} \big\langle A^r e^{\tau A}   F_n (V_n), A^r e^{\tau A}V_n\big\rangle dt
        \\
        &+\frac p2 \|A^r e^{\tau A} \sigma_n(V_n)\|^2_{L_2(\Uc, \mathcal D_0)} \|A^r e^{\tau A}V_n\|^{p-2} dt
        \\
        &+ \frac{p(p-2)}2 \big\langle A^r e^{\tau A} \sigma_n(V_n), A^r e^{\tau A} V_n \big\rangle^2 \|A^r e^{\tau A}V_n\|^{p-4} dt
        \\
        &+ p \big\langle A^r e^{\tau A} \sigma_n(V_n), A^r e^{\tau A} V_n \big\rangle \|A^r e^{\tau A}V_n\|^{p-2} dW 
        \\
       := &  B_1 dt +  B_2 dt +  B_3 dt +  B_4 dt +  B_5 dW.
    \end{split}
\end{equation*}
For the nonlinear terms $ B_1$, Lemma~\ref{lemma-inviscid} and properties of the cut-off function $\theta_\rho$ imply that   
\begin{equation*}
\begin{split}
    \int_0^t | B_1| &= p \theta_\rho(\|V_n\|_{\tau,r} ) \|A^r e^{\tau A}V_n\|^{p-2} \Big|\big\langle A^r e^{\tau A}    Q_n(V_n, V_n) , A^r e^{\tau A}V_n\big\rangle \Big|ds
    \\
    &\leq C_{r,p}  \int_0^t\theta_\rho(\|V_n\|_{\tau,r} ) \|V_n\|_{\tau,r} \|A^r e^{\tau A}V_n\|^{p-2}  \|A^{r+\frac12} e^{\tau A} V_n\|^2  ds
    \\
    &\leq C_{r,p}\int_0^t \rho\|A^r e^{\tau A}V_n\|^{p-2}  \|A^{r+\frac12} e^{\tau A} V_n\|^2 ds. 
\end{split}
\end{equation*}
Thanks to the periodic boundary condition, using integration by parts, the Cauchy-Schwartz inequality, Young's inequality, and \eqref{force-inviscid}, one obtains
\begin{equation*}
    \begin{split}
       \int_0^t B_2 ds &\leq  \int_0^t p \|A^r e^{\tau A}V_n\|^{p-2} \Big|\big\langle A^r e^{\tau A}   F_n (V_n), A^r e^{\tau A}V_n\big\rangle\Big| ds
       \\
       &= \int_0^t p \|A^r e^{\tau A}V_n\|^{p-2} \Big|\big\langle A^r e^{\tau A}   f, A^r e^{\tau A}V_n\big\rangle\Big| ds
       \\
       &\leq C_p \left(1  +  \int_0^t \|V_n\|_{\tau,r}^p ds \right) + \frac14 \sup\limits_{s\in[0,t]} \|A^r e^{\tau A} V_n\|^p.
    \end{split}
\end{equation*}
From \eqref{noise-inviscid}, we know that $\|A^r e^{\tau A} \sigma(V_n)\|^2_{L_2(\Uc, \mathcal D_0)} \leq C \Big(1+ \| V_n\|^2 + \|A^{r+\frac12} e^{\tau A}V_n\|^2 \Big)$, and thus
\begin{equation*}
\begin{split}
   \int_0^t | B_3|+| B_4| ds &\leq C_p \int_0^t (1+ \| V_n\|^2 + \|A^{r+\frac12} e^{\tau A}V_n\|^2 ) \|A^r e^{\tau A}V_n\|^{p-2} ds
    \\
    &\leq \int_0^t C_p (1+\|V_n\|_{\tau,r}^p + \|A^r e^{\tau A}V_n\|^{p-2} \|A^{r+\frac12} e^{\tau A}V_n\|^2)ds.
\end{split}
\end{equation*}
Finally, using the Burkholder-Davis-Gundy inequality, the Cauchy-Schwartz inequality, Young's inequality, and the property of $\sigma$ in \eqref{noise-inviscid}, we deduce that
\begin{equation*}
\begin{split}
    \mathbb E \sup\limits_{s\in[0,t]} \Big|\int_0^s B_5 dW \Big| \leq &C_p \mathbb E\Big(\int_0^t \|A^r e^{\tau A} V_n\|^{2(p-1)}  \|\sigma_n(V_n)\|^2_{L_2(\Uc, \mathcal D_{\tau,r})} \Big)^{\frac12}
    \\
    \leq &C_p \mathbb E\Big(\int_0^t \|A^r e^{\tau A} V_n\|^{2(p-1)}  (1+ \| V_n\|^2 + \|A^{r+\frac12} e^{\tau A}V_n\|^2 ) \Big)^{\frac12}
    \\
    \leq &\frac14 \mathbb E \sup\limits_{s\in[0,t]} \|A^r e^{\tau A} V_n\|^p 
    + C_p \mathbb E \int_0^t (1+ \|V_n\|_{\tau,r}^p + \|A^r e^{\tau A} V_n\|^{p-2}\|A^{r+\frac12} e^{\tau A}V_n\|^2 ) ds.
\end{split}
\end{equation*}

Combining the estimates of $A_1$ to $A_5$ and $B_1$ to $B_5$, thanks to \eqref{inviscid-norm-equi}, one obtains that
\begin{equation}\label{est:1}
    \begin{split}
        & \mathbb E \sup\limits_{s\in[0,t]} \|V_n\|_{\tau,r}^p +  \mathbb E \int_0^t \|A^r e^{\tau A} V_n\|^{p-2}\|A^{r+\frac12} e^{\tau A}V_n\|^2  ds
        \\
        \leq & \mathbb E \sup\limits_{s\in[0,t]} (2\|V_n\|^2 + 2\|A^r e^{\tau A} V_n\|^2)^{\frac p2} + \mathbb E \int_0^t \|A^r e^{\tau A} V_n\|^{p-2}\|A^{r+\frac12} e^{\tau A}V_n\|^2  ds
        \\
        \leq & C_p \Big( \mathbb E \sup\limits_{s\in[0,t]} \|V_n\|^p + \mathbb E \sup\limits_{s\in[0,t]} \|A^r e^{\tau A} V_n\|^p\Big) + \mathbb E \int_0^t \|A^r e^{\tau A} V_n\|^{p-2}\|A^{r+\frac12} e^{\tau A}V_n\|^2  ds
        \\
        \leq & C_p\big(\dot{\tau} + C_{r,p} (\rho+1)\big) \mathbb E \int_0^t \|A^r e^{\tau A} V_n\|^{p-2}\|A^{r+\frac12} e^{\tau A}V_n\|^2  ds
        \\
        &+ C_{p}\mathbb E \Big[ 1 +  \|(V_n)_0\|_{\tau_0,r}^p + \int_0^t \big(1 +  \|V_n\|_{\tau,r}^p   \big) ds \Big].
    \end{split}
\end{equation}
Here we have added the term $\mathbb E \int_0^t \|A^r e^{\tau A} V_n\|^{p-2}\|A^{r+\frac12} e^{\tau A}V_n\|^2  ds$ on both hand sides in order to get the corresponding regularity.
Let the deterministic function $\tau(t)$ satisfy
\begin{equation}\label{equation:tau}
    \tau(t) = \tau_0 - C_{r,p}(\rho + 1) t,
\end{equation}
and define the time 
\begin{equation}\label{T}
    T = \frac{\tau_0}{2C_{r,p}(\rho + 1)}.
\end{equation}
Observe that $\tau(t)>0$ and $\dot{\tau} + C_{r,p} (\rho+1) = 0$ for $t\in[0,T]$, and therefore \eqref{est:1} gives
\begin{equation*}
\begin{split}
    & \mathbb E \Big[\sup\limits_{s\in[0,t]} \|V_n\|_{\tau,r}^p +  \int_0^t \|A^r e^{\tau A} V_n\|^{p-2}\|A^{r+\frac12} e^{\tau A}V_n\|^2 ds \Big]
    \\
    \leq &C_{p}\mathbb E \Big[ 1+ \|V_0\|_{\tau_0,r}^p + \int_0^t \big(1 +  \|V_n\|_{\tau,r}^p   \big) ds \Big].
\end{split}
\end{equation*}
Thanks to the Gronwall inequality \cite[Lemma 5.3]{glatt2009strong}, for $p\geq 2$ and $t=T$, one has
\begin{equation}\label{eq:lemmaitem1}
    \mathbb E \Big[\sup\limits_{s\in[0,T]} \|V_n\|_{\tau,r}^p +  \int_0^T \|A^r e^{\tau A} V_n\|^{p-2}\|A^{r+\frac12} e^{\tau A}V_n\|^2 ds \Big]  \leq C_p(1+ \mathbb E \|V_0\|_{\tau_0,r}^p) e^{C_p T}.
\end{equation}

For part (ii), the fractional Burkholder-Davis-Gundy inequality \eqref{eq:bdg.frac}, \eqref{noise-inviscid}, and the estimate \eqref{eq:lemmaitem1} produce
\begin{equation*}
    \begin{split}
        \Eb \left| \int_0^\cdot e^{\tau A}\sigma_n\left(V_n\right) \, dW \right|^p_{W^{\alpha, p}\left(0, T; \mathcal D_{0,r-1}\right)} &\leq C_p \Eb \int_0^T \| e^{\tau A}\sigma_n\left(V_n\right) \|^p_{L_2\left(\Uc, \mathcal D_{0,r-1}\right)} \, ds 
        \\
        &\leq  C_p \Eb \left[  \int_0^T 1 + \|V_n\|_{\tau,r}^p \, ds \right] < \infty.
    \end{split}
\end{equation*}

Finally for (iii), from \eqref{PE-inviscid-galerkin-1} (or \eqref{PE-inviscid-galerkin-U-1}), for $t\in[0,T]$, one has
\begin{equation*}
    \begin{split}
        &e^{\tau A}V_n(t) - \int_0^t e^{\tau A}\sigma_n\left(V_n\right) \, dW 
        \\
        =& e^{\tau_0 A}V_n(0) - \int_0^t \theta_\rho(\|V_n\|_{\tau,r} ) e^{\tau A} Q_n(V_n, V_n) + e^{\tau A} F_n\left(V_n\right)  - \dot{\tau}A e^{\tau A} V_n \, ds.
    \end{split}
\end{equation*}
Then by Lemma~\ref{lemma-banach-algebra} and $r>\frac52$, the Minkowski inequality, and Young's inequality, from the properties of $\sigma$, $F$, and the cut-off function $\theta_\rho$, we have
\begin{equation}\label{estimate:w12}
\begin{split}
    & \Eb \left\| e^{\tau A}V_n - \int_0^\cdot e^{\tau A} \sigma_n\left(V_n\right) \, dW \right\|^2_{W^{1, 2}\left(0, T; \mathcal D_{0,r-1}\right)} 
    \\
    \leq &C_{T,f_0,p,\rho,r} \Eb \left[ 1+ \| V_0\|_{\tau_0,r}^2 + \sup\limits_{t\in[0,T]}\|V_n(t)\|_{\tau,r}^4+ \int_0^T   \| f\|_{\tau,r}^2 \, ds \right]
    < \infty,
\end{split}
\end{equation}
where the pressure gradient disappears since $\nabla P$ is orthogonal to the space $\mathcal D_{0,r-1}.$ Note that $p\geq 4$ is required due to applying the estimate \eqref{eq:lemmaitem1} to the term $\sup\limits_{t\in[0,T]}\|V_n(t)\|_{\tau,r}^4$ in \eqref{estimate:w12}.
\end{proof}
\begin{remark}\label{remark:glocal-modified}
The energy estimates in Lemma \ref{lemma:estimate-inviscid} hold up to a finite time $T$ defined in \eqref{T}. Due to this fact, the martingale solutions to the modified system \eqref{PE-inviscid-system-modified}, constructed below in Proposition~\ref{prop:global.martingale.existence}, can only exist on $[0,T]$ (see Proposition \ref{prop:approximating.sequence} and \ref{prop:global.martingale.existence}, below). Alternatively, one can replace the cut-off function $\theta_\rho$ by $\theta_{\rho \tau(t)}$ in the modified system \eqref{PE-inviscid-system-modified}, where $\tau(t)$ is the radius of analyticity, then energy estimates similar to Lemma \ref{lemma:estimate-inviscid} will hold for the new modified system with $\theta_{\rho \tau(t)}$ for any time $t\geq 0$. Moreover, based on this, the martingale solutions to this new modified system will exist globally in time. Nevertheless, the solution to the original one will still be local after applying a stopping time (see Corollary \ref{cor:loc.mart.sol}). 
\end{remark}

\begin{corollary}\label{cor:u}
With the same assumptions as in Lemma~\ref{lemma:estimate-inviscid}. Let $\tau(t)$ and $T$ be defined in \eqref{equation:tau} and \eqref{T}, respectively.
Suppose that $U_n = e^{\tau A}V_n$ is the solution to system \eqref{PE-inviscid-system-galerkin-U}, and we denote by $U_0 = e^{\tau_0A} V_0$. Then 
\begin{enumerate}
	\item $
	  \sup\limits_{n\in \N}	\mathbb E \Big[\sup\limits_{s\in[0,T]} \|U_n\|_{0,r}^p +  \int_0^T \|A^r  U_n\|^{p-2}\|A^{r+\frac12} U_n\|^2 ds \Big]  \leq (1+ \mathbb E \|U_0\|_{0,r}^p) e^{C_p T};
	$
		\item For $\alpha\in [0,1/2)$, $\int_0^{\cdot} e^{\tau A}\sigma_n(e^{-\tau A}U_n) dW$ is bounded in
	$
		L^p\bigl( \Omega; W^{\alpha, p}(0, T; \mathcal D_{0,r-1}) \bigr);	
	$
	\item If moreover $p\geq 4$, then $U_n - \int_0^\cdot e^{\tau A}\sigma_n(e^{-\tau A}U_n) dW$ is bounded in 
	$
		L^2\bigl( \Omega; W^{1, 2}(0, T; \mathcal D_{0,r-1})\bigr) .
	$
\end{enumerate}
\end{corollary}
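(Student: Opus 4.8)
The plan is to recognize that Corollary~\ref{cor:u} is nothing more than a restatement of Lemma~\ref{lemma:estimate-inviscid} in the unknown $U_n = e^{\tau A}V_n$, so the whole proof reduces to matching norms on the Fourier side. First I would record the elementary identities: for every $t \in [0,T]$, since $e^{\tau A}$ is the Fourier multiplier $\bk \mapsto e^{\tau|\bk|}$,
\[
\|V_n\|_{\tau,r} = \|e^{\tau A}V_n\|_{H^r} = \|U_n\|_{H^r} = \|U_n\|_{0,r},
\]
and likewise $\|A^r e^{\tau A}V_n\| = \|A^r U_n\|$ and $\|A^{r+\frac12} e^{\tau A}V_n\| = \|A^{r+\frac12}U_n\|$. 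Since $P_n$ and $e^{\tau_0 A}$ commute (both being Fourier multipliers) and $P_n$ is a contraction on every $H^s$, one has $U_n(0) = e^{\tau_0 A}(V_n)_0 = P_n(e^{\tau_0 A}V_0) = P_n U_0$, hence $\|U_n(0)\|_{0,r} \le \|U_0\|_{0,r} = \|e^{\tau_0 A}V_0\|_{H^r} = \|V_0\|_{\tau_0,r}$. Substituting these identities into the conclusion \eqref{eq:lemmaitem1} of Lemma~\ref{lemma:estimate-inviscid}(i) immediately yields part~(i) (the constant $C_p$ from the lemma may be absorbed into $e^{C_p T}$, or simply kept).

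Next I would handle (ii) and (iii) by observing that the processes named in the corollary are literally those bounded in Lemma~\ref{lemma:estimate-inviscid}. Indeed, by the definition $U_n = e^{\tau A}V_n$ we have $\sigma_n(e^{-\tau A}U_n) = \sigma_n(V_n)$, so $\int_0^\cdot e^{\tau A}\sigma_n(e^{-\tau A}U_n)\,dW = \int_0^\cdot e^{\tau A}\sigma_n(V_n)\,dW$, which is bounded in $L^p(\Omega; W^{\alpha,p}(0,T;\mathcal D_{0,r-1}))$ for $\alpha \in [0,1/2)$ by Lemma~\ref{lemma:estimate-inviscid}(ii); this is exactly part~(ii). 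Similarly,
\[
U_n - \int_0^\cdot e^{\tau A}\sigma_n(e^{-\tau A}U_n)\,dW = e^{\tau A}V_n - \int_0^\cdot e^{\tau A}\sigma_n(V_n)\,dW,
\]
which is bounded in $L^2(\Omega; W^{1,2}(0,T;\mathcal D_{0,r-1}))$ by Lemma~\ref{lemma:estimate-inviscid}(iii) whenever $p \ge 4$; this is part~(iii).

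Since all three assertions are term-by-term translations under the change of variable $U_n = e^{\tau A}V_n$, there is no genuine obstacle. The only point that warrants a line of care is verifying that the seminorm weight generated by $A^r e^{\tau A}$ acting on $V_n$ coincides with that of $A^r$ acting on $U_n$ — immediate from the Fourier-coefficient definitions — together with the bookkeeping observation that $P_n$ commutes with $e^{\tau_0 A}$, so that $\|U_n(0)\|_{0,r} \le \|U_0\|_{0,r}$. No new estimates, stochastic inequalities, or nonlinear bounds are required beyond those already established in the proof of Lemma~\ref{lemma:estimate-inviscid}.
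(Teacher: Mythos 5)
Your proof is correct and matches the (implicit) intent of the paper, which states the corollary without proof precisely because it is a term-by-term restatement of Lemma~\ref{lemma:estimate-inviscid} under the substitution $U_n = e^{\tau A}V_n$; the norm identities $\|V_n\|_{\tau,r} = \|U_n\|_{0,r}$, $\|A^{r}e^{\tau A}V_n\| = \|A^{r}U_n\|$, $\|A^{r+\frac12}e^{\tau A}V_n\| = \|A^{r+\frac12}U_n\|$, the cancellation $\sigma_n(e^{-\tau A}U_n) = \sigma_n(V_n)$, and the commutation of $P_n$ with $e^{\tau_0 A}$ are exactly the bookkeeping required. Your remark about absorbing or retaining the leading constant $C_p$ is appropriate and addresses the only cosmetic discrepancy between the two displays.
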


\subsection{Local existence of martingale solutions}\label{section:existence-inviscid}

In this subsection, we establish the existence of the martingale solutions to system \eqref{PE-inviscid-system-abstract}. First, we need to construct the local martingale solutions to the modified system \eqref{PE-inviscid-system-modified}. For this purpose, we follow similar procedures as in \cite{brzezniak2021well,debussche2011local}. However, since our spaces $\mathcal D_{\tau(t),r}$ are changing with time, in order to use Aubin-Lions Lemma~\ref{lemma:aubin-lions}, one needs to work with $U_n$ instead of $V_n$ since $U_n$ lives in classical Sobolev spaces.

Given an initial distribution $\mu_0$ satisfying \eqref{eq:mu.zero} for some $p\geq 4$ (which is implied by \eqref{condition:mu-zero}), for some stochastic basis $\Sc = \left(\Omega, \Fc, \Fb, \Pb\right)$,
let $U_0= e^{\tau_0A}V_0$ be an $\Fc_0$-measurable $\mathcal D_{0,r}$-valued random variable with law $\mu_0$. For fixed $\rho \geq M$, let $U_n$ be the solutions to the approximating system \eqref{PE-inviscid-system-galerkin-U} with $U_n(0) = P_nU_0 $ on the stochastic basis $\Sc$. Define
\begin{equation*}
	\Xc_{U} = L^2\left(0, T; \mathcal D_{0,r} \right) \cap C\left(\left[0, T\right]; \mathcal D_{0,r-\frac32}\right), \quad \Xc_W = C\left(\left[0, T\right]; \Uc_0\right), \quad \Xc = \Xc_{U} \times \Xc_W,
\end{equation*}
where $T$ is defined in \eqref{T}. Notice that here $\mathcal D_{0,r}$ and $\mathcal D_{0,r-\frac32}$ are independent of time.
Let $\mu_{U}^n$, $\mu^n_W$ and $\mu^n$ be laws of $U_n$, $W$ and $(U_n, W)$ on $\Xc_{U}$, $\Xc_W$ and $\Xc$, respectively, in other words
\begin{equation}
	\label{eq:mu.measure}
	\mu^n_{U}(\cdot) = \Pb\left( \left\lbrace U_n \in \cdot \right\rbrace\right), \qquad \mu_W^n(\cdot) = \Pb\left( \left\lbrace W \in \cdot \right\rbrace \right), \qquad \mu^n = \mu_{U}^n \otimes \mu_W^n.
\end{equation}

The proof of the existence of the local martingale solutions to the modified system \eqref{PE-inviscid-system-modified} will be shown once we prove the following two propositions, which are following \cite[Proposition 4.1 and Proposition 7.1]{debussche2011local} or \cite[Proposition 3.2 and Proposition 3.3]{brzezniak2021well}. 

\begin{proposition}
\label{prop:approximating.sequence}
Let $\mu_0$ be a probability measure on $\mathcal D_{0,r}$ satisfying
\begin{equation}\label{condition:p}
    \int_{\mathcal D_{0,r}} \|U\|_{0,r}^p d\mu_0(U)<\infty  
\end{equation}
with $p\geq4$  
and let $(\mu^n)_{n \geq 1}$ be the measures defined in \eqref{eq:mu.measure}. Then there exists a probability space $(\tilde{\Omega}, \tilde{\Fc}, \tilde{\Pb})$, a subsequence $n_k \to \infty$ as $k \to \infty$ and a sequence of $\Xc$-valued random variables $(\tilde{U}_{n_k}, \tilde{W}_{n_k})$ such that
\begin{enumerate}
	\item $(\tilde{U}_{n_k}, \tilde{W}_{n_k})$ converges in $\Xc$ to $(\tilde{U}, \tilde{W}) \in \Xc$ almost surely,
	\item $\tilde{W}_{n_k}$ is a cylindrical Wiener process with reproducing kernel Hilbert space $\Uc$ adapted to the filtration $\left( \Fc_t^{n_k} \right)_{t \geq 0}$, where $\left( \Fc_t^{n_k} \right)_{t \geq 0}$ is the completion of $\sigma(\tilde{W}_{n_k}, \tilde{U}_{n_k}; 0 \leq s \leq t)$, 
	\item for $t\in[0,T]$, each pair $(\tilde{U}_{n_k}, \tilde{W}_{n_k})$ satisfies the equation 
	\begin{equation}
	\label{eq:appr.after.skorohod}
	\begin{split}
	    &d \tilde U_{n_k} + \big[\theta_\rho(\|\tilde U_{n_k}\|_{0,r} ) e^{\tau A} Q_{n_k}(e^{-\tau A}\tilde U_{n_k}, e^{-\tau A}\tilde U_{n_k}) + e^{\tau A}F_{n_k}(e^{-\tau A}\tilde U_{n_k}) -\dot{\tau} A \tilde U_{n_k}\big]dt  
	    \\
	    &= e^{\tau A}\sigma_{n_k} (e^{-\tau A}\tilde U_{n_k}) d\tilde W_{n_k} 
	\end{split}
	\end{equation}
	where $\tau$ and $T$ are defined in \eqref{equation:tau} and \eqref{T}, respectively. 
\end{enumerate}
\end{proposition}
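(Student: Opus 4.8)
\emph{Overview.} The plan is to run the standard tightness/Skorokhod compactness scheme. First I would show the family of joint laws $\{\mu^n\}_{n\ge1}$ of $(U_n,W)$ is tight on $\Xc=\Xc_U\times\Xc_W$; Prokhorov's theorem then yields a weakly convergent subsequence $\mu^{n_k}\rightharpoonup\mu$; the Skorokhod representation theorem realizes these laws on a common probability space with almost sure convergence, which is item (i); finally I would identify $\tilde{W}_{n_k}$ as a cylindrical Wiener process adapted to its natural filtration, which is item (ii), and transfer the Galerkin equation \eqref{eq:appr.after.skorohod} to the new space, which is item (iii). The structural point is to argue throughout at the level of $U_n=e^{\tau A}V_n$: unlike $V_n$, which lives in the moving spaces $\mathcal D_{\tau(t),r}$, the process $U_n$ takes values in the fixed Sobolev spaces $\mathcal D_{0,s}$, on which Rellich's theorem on $\mathbb T^3$ and the Aubin--Lions--Simon lemma (Lemma~\ref{lemma:aubin-lions}) apply directly. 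Here $U_n$ is well defined on all of $[0,T]$ by the bound in Corollary~\ref{cor:u}(i), and $\mu^n_W$, being the (single) law of $W$, is automatically tight on $\Xc_W=C([0,T];\Uc_0)$ since $\Uc$ embeds into $\Uc_0$ by a Hilbert--Schmidt map.

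\emph{Tightness of $\{\mu^n_U\}$.} I would extract three uniform bounds from Corollary~\ref{cor:u}. Taking $p=2$ bounds $\{U_n\}$ in $L^2(\Omega;L^\infty(0,T;\mathcal D_{0,r}))$ and, via $\|\cdot\|_{0,r+\frac12}^2\simeq\|\cdot\|^2+\|A^{r+\frac12}\cdot\|^2$, in $L^2(\Omega;L^2(0,T;\mathcal D_{0,r+\frac12}))$. For time regularity I would split $U_n=\bigl(U_n-\int_0^\cdot e^{\tau A}\sigma_n(e^{-\tau A}U_n)\,dW\bigr)+\int_0^\cdot e^{\tau A}\sigma_n(e^{-\tau A}U_n)\,dW$; Corollary~\ref{cor:u}(iii) (this is where $p\ge4$ enters, through $\sup_t\|U_n\|_{\tau,r}^4$ in the analog of \eqref{estimate:w12}) bounds the first summand in $L^2(\Omega;W^{1,2}(0,T;\mathcal D_{0,r-1}))$, while the fractional Burkholder--Davis--Gundy inequality \eqref{eq:bdg.frac}, applied at exponents $2$ and $p$ (its right-hand sides finite by \eqref{noise-inviscid} and Corollary~\ref{cor:u}(i), cf.\ Corollary~\ref{cor:u}(ii)), bounds the stochastic integral in $L^2(\Omega;W^{\alpha,2}(0,T;\mathcal D_{0,r-1}))$ and in $L^2(\Omega;W^{\alpha,p}(0,T;\mathcal D_{0,r-1}))$ for $\alpha\in[0,1/2)$; since on $(0,T)$ one has $W^{1,2}\hook W^{\alpha,2}$ and $W^{1,2}\hook C^{0,1/2}\hook W^{\alpha,p}$, the two summands together keep $\{U_n\}$ bounded in $L^2(\Omega;W^{\alpha,2}(0,T;\mathcal D_{0,r-1}))$ and in $L^2(\Omega;W^{\alpha,p}(0,T;\mathcal D_{0,r-1}))$. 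Fixing $\alpha\in(1/p,1/2)$ (nonempty since $p\ge4$): by Lemma~\ref{lemma:aubin-lions}(a) with exponent $2$ and $\mathcal D_{0,r+\frac12}\hook\hook\mathcal D_{0,r}\hook\mathcal D_{0,r-1}$, balls bounded in $L^2(0,T;\mathcal D_{0,r+\frac12})\cap W^{\alpha,2}(0,T;\mathcal D_{0,r-1})$ are relatively compact in $L^2(0,T;\mathcal D_{0,r})$; by Lemma~\ref{lemma:aubin-lions}(b) with exponent $p$, $\alpha p>1$, and $\mathcal D_{0,r-1}\hook\hook\mathcal D_{0,r-\frac32}$, balls bounded in $W^{\alpha,p}(0,T;\mathcal D_{0,r-1})$ are relatively compact in $C([0,T];\mathcal D_{0,r-\frac32})$; intersecting sufficiently large balls of these spaces gives compact subsets of $\Xc_U$. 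Chebyshev's inequality applied to the uniform $L^2(\Omega)$-bounds then produces, for each $\varepsilon>0$, a compact $K_\varepsilon\subset\Xc_U$ with $\sup_n\mu^n_U(\Xc_U\setminus K_\varepsilon)<\varepsilon$; hence $\{\mu^n_U\}$, and so $\{\mu^n=\mu^n_U\otimes\mu^n_W\}$, is tight on $\Xc$.

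\emph{Skorokhod, the Wiener property, and the equation.} Prokhorov's theorem gives $n_k\to\infty$ with $\mu^{n_k}\rightharpoonup\mu$, and, $\Xc$ being a separable metric space, the Skorokhod representation theorem gives $(\tilde{\Omega},\tilde{\Fc},\tilde{\Pb})$ carrying $(\tilde{U}_{n_k},\tilde{W}_{n_k})$, $(\tilde{U},\tilde{W})$ with laws $\mu^{n_k}$, $\mu$ and $(\tilde{U}_{n_k},\tilde{W}_{n_k})\to(\tilde{U},\tilde{W})$ in $\Xc$, $\tilde{\Pb}$-a.s.; this is (i). With $(\Fc_t^{n_k})$ the completion of $\sigma(\tilde{U}_{n_k}(s),\tilde{W}_{n_k}(s):0\le s\le t)$, the equality of laws $\mathrm{Law}(\tilde{W}_{n_k})=\mathrm{Law}(W)$, the independence of $\tilde{W}_{n_k}(t)-\tilde{W}_{n_k}(s)$ from $\Fc_s^{n_k}$ (inherited from the corresponding property of $(U_{n_k},W)$), and the L\'evy characterization show $\tilde{W}_{n_k}$ is a cylindrical Wiener process with reproducing kernel Hilbert space $\Uc$ adapted to $(\Fc_t^{n_k})$; this is (ii), carried out as in \cite[Section~4]{debussche2011local} or \cite[Section~3]{brzezniak2021well}. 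For (iii), I would consider the $\mathcal D_{0,r-1}$-valued residual $\Lambda_{n_k}(t):=\tilde{U}_{n_k}(t)-\tilde{U}_{n_k}(0)+\int_0^t\bigl[\theta_\rho(\|\tilde{U}_{n_k}\|_{0,r})e^{\tau A}Q_{n_k}(e^{-\tau A}\tilde{U}_{n_k},e^{-\tau A}\tilde{U}_{n_k})+e^{\tau A}F_{n_k}(e^{-\tau A}\tilde{U}_{n_k})-\dot{\tau}A\tilde{U}_{n_k}\bigr]ds-\int_0^t e^{\tau A}\sigma_{n_k}(e^{-\tau A}\tilde{U}_{n_k})\,d\tilde{W}_{n_k}$: the drift part is a pathwise continuous functional of $\tilde{U}_{n_k}$, and the It\^o integral is determined by the joint law of $(\tilde{U}_{n_k},\tilde{W}_{n_k})$ through the $L^2$-limit of its Riemann sums (see \cite[Ch.~4]{da2014stochastic}); hence $\mathrm{Law}(\Lambda_{n_k})$ coincides with the law of the corresponding residual for $(U_{n_k},W)$, which vanishes identically by \eqref{PE-inviscid-system-galerkin-U}. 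Thus $\Lambda_{n_k}\equiv0$ $\tilde{\Pb}$-a.s., i.e.\ \eqref{eq:appr.after.skorohod} holds.

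\emph{Main obstacle.} The heart of the argument is the tightness step: one must convert a priori control living in the time-varying analytic spaces $\mathcal D_{\tau(t),r}$ into compactness in fixed spaces --- which forces the change of variables to $U_n$ --- and then check that the mere half-order spatial gain of Corollary~\ref{cor:u}(i), together with the only fractional ($\alpha<1/2$) time regularity of the stochastic forcing, still suffice to feed both parts of Lemma~\ref{lemma:aubin-lions}. The need to split off the stochastic integral, and the attendant requirement $p\ge4$ used to bound $\sup_t\|U_n\|^4$ via \eqref{estimate:w12}, is exactly where the ``weak dissipation'' of the analytic framework makes matters tighter than in the horizontally viscous settings of \cite{debussche2011local,brzezniak2021well,saal2021stochastic}; items (ii) and (iii) are then routine modulo careful bookkeeping.
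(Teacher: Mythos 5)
Your proposal is correct and follows exactly the strategy the paper invokes: it derives tightness of $\{\mu^n\}$ on $\Xc$ from Corollary~\ref{cor:u} together with the Aubin--Lions--Simon Lemma~\ref{lemma:aubin-lions} applied to the fixed-space embeddings $\mathcal D_{0,r+\frac12}\hook\hook\mathcal D_{0,r}\hook\mathcal D_{0,r-1}\hook\hook\mathcal D_{0,r-\frac32}$ (these are precisely the substitutes for $D(A)\subset V\subset H\subset V'$ that the paper indicates when citing \cite[Lemma~4.1]{debussche2011local}), then applies Prokhorov and Skorokhod for (i), and handles (ii) and (iii) by the standard law-transfer arguments from \cite{brzezniak2021well,bensoussan1995stochastic}. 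The only difference is that you unpack the literature references into explicit estimates, which is consistent with and adds nothing contradictory to the paper's more telegraphic proof.
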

\begin{remark}
In \cite[Proposition 3.2]{brzezniak2021well}, the authors required a condition similar to \eqref{condition:p} with $p\geq 8$. The reason is that the estimate (3.10) in \cite{brzezniak2021well} requires such a higher integrability condition. The estimation therein corresponds to \eqref{estimate:w12} in our derivation, which only requires $p\geq 4$. 
\end{remark}
\begin{proof}
For part (i), with Lemmas~\ref{lemma:aubin-lions} and \ref{lemma:estimate-inviscid}, one has the tightness of $\lbrace \mu^n \rbrace_{n\geq 1}$  in $\Xc$ following the argument in \cite[Lemma 4.1]{debussche2011local} with the spaces $D(A)$, $V$, $H$, and $V'$ therein being replaced by $\mathcal D_{0,r+\frac12}$, $\mathcal D_{0,r}$, $\mathcal D_{0,r-1}$, and $\mathcal D_{0,r-\frac32}$. Thus $\lbrace \mu^n \rbrace_{n\geq 1}$ is weakly compact by Prokhorov's theorem. Then, the first assertion follows immediately by the Skorokhod Theorem, see, {\it e.g.}, \cite[Theorem 2.4]{da2014stochastic}.

Parts (ii) and (iii) follow from the proof in \cite[Proposition 3.2]{brzezniak2021well}; see also \cite[Section 4.3.4]{bensoussan1995stochastic}.

\end{proof}

\begin{proposition}
\label{prop:global.martingale.existence}
For $\tau(t)$ and $T$ defined in \eqref{equation:tau} and \eqref{T}, respectively, let $(\tilde{U}_{n_k}, \tilde{W}_{n_k})$ be a sequence of $\Xc$-valued random variables on a probability space $(\tilde{\Omega}, \tilde{\Fc}, \tilde{\Pb})$ such that
\begin{enumerate}
	\item $(\tilde{U}_{n_k}, \tilde{W}_{n_k}) \to (\tilde{U}, \tilde{W})$ in the topology of $\Xc$ $\tilde{\Pb}$-almost surely, that is,
	\begin{equation*}
		\tilde{U}_{n_k} \to \tilde{U} \ \text{in} \ L^2\left(0, T; \mathcal D_{0,r}\right) \cap C\left(\left[0, T \right], \mathcal D_{0,r-\frac32}\right), \ \tilde{W}_{n_k} \to \tilde{W} \ \text{in} \ C\left(\left[0, T\right]; \Uc_0 \right),
	\end{equation*}
	\item $\tilde{W}_{n_k}$ is a cylindrical Wiener process with reproducing kernel Hilbert space $\Uc$ adapted to the filtration $\left( \Fc_t^{n_k} \right)_{t \geq 0}$ that contains  $\sigma(\tilde{W}_{n_k}, \tilde{U}_{n_k}; 0 \leq s \leq t)$,
	\item each pair $(\tilde{U}_{n_k}, \tilde{W}_{n_k})$ satisfies \eqref{eq:appr.after.skorohod}.
\end{enumerate}
Let 
\begin{equation}\label{u-to-v}
    \tilde V_{n_k} = e^{-\tau A} \tilde U_{n_k}, \ \ \tilde V = e^{-\tau A} \tilde U,
\end{equation}
and let $\tilde{\Fc}_t$ be the completion of $\sigma(\tilde{W}(s), \tilde{V}(s), 0 \leq s \leq t)$ and $\tilde{\Sc} = (\tilde{\Omega}, \tilde{\Fc}, ( \tilde{\Fc}_t )_{t \geq 0}, \tilde{\Pb})$. Then $(\tilde{\Sc}, \tilde{W}, \tilde{V},T)$ is a local martingale solution to the modified system \eqref{PE-inviscid-system-modified} on the time interval $[0,T]$. Moreover, $\tilde{V}$ satisfies
\begin{equation}
	\label{eq:martingale.solution.approx.regularity}
	\tilde{V} \in L^2\left( \tilde \Omega; C\left( [0, T]; \mathcal D_{\tau(t),r} \right) \right), \qquad  \|A^{r+\frac12} e^{\tau(t)A} \tilde{V} \|^2  \|A^{r} e^{\tau(t)A} \tilde{V}\|^{p-2} \in L^1\left(\tilde \Omega; L^1\left( 0, T \right) \right).
\end{equation}
\end{proposition}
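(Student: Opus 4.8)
The plan is to pass to the limit in the Galerkin equation \eqref{eq:appr.after.skorohod} along the subsequence $n_k$, using the almost sure convergence in $\Xc$ provided by Proposition~\ref{prop:approximating.sequence}, and then transfer the information back from $\tilde U$ to $\tilde V = e^{-\tau A}\tilde U$. First I would record that, since the laws of $\tilde U_{n_k}$ coincide with $\mu_U^{n_k}$, the uniform bounds of Corollary~\ref{cor:u}(i) transfer verbatim to $\tilde U_{n_k}$ on the new probability space; combined with the a.s.\ convergence $\tilde U_{n_k}\to\tilde U$ in $L^2(0,T;\mathcal D_{0,r})\cap C([0,T];\mathcal D_{0,r-3/2})$ and Fatou's lemma / weak lower semicontinuity, this yields
\[
  \tilde U \in L^2\bigl(\tilde\Omega; C([0,T];\mathcal D_{0,r})\bigr), \qquad \|A^{r+\frac12}\tilde U\|^2\|A^r\tilde U\|^{p-2}\in L^1(\tilde\Omega;L^1(0,T)),
\]
and (by interpolation with the uniform $L^p_\omega L^\infty_t \mathcal D_{0,r}$ bound and the strong $L^2_tL^2_x$-type convergence) an improvement of the convergence mode, e.g. $\tilde U_{n_k}\to\tilde U$ strongly in $L^2(\tilde\Omega;L^2(0,T;\mathcal D_{0,r'}))$ for any $r'<r$ and weakly-$*$ in $L^p(\tilde\Omega;L^\infty(0,T;\mathcal D_{0,r}))$. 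Translating through the time-dependent isometry $e^{-\tau(t)A}$ (which is bounded with bounded inverse on the relevant time interval since $\tau(t)\in[\tau_0/2,\tau_0]$ for $t\in[0,T]$) gives exactly the regularity \eqref{eq:martingale.solution.approx.regularity} for $\tilde V$.

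Next I would identify the limits of each term in \eqref{eq:appr.after.skorohod} tested against a fixed finitely-supported Fourier mode $\phi_{\bk}$ (it suffices to check the identity in $\mathcal D_0$ via pairing with such $\phi_{\bk}$, then use density). The linear terms $\dot\tau A\tilde U_{n_k}$ and $e^{\tau A}F_{n_k}(e^{-\tau A}\tilde U_{n_k})$ pass to the limit by the strong $L^2_{t}\mathcal D_{0,r}$-convergence together with $P_{n_k}\to I$ strongly; note $F$ contains only the linear Coriolis, pressure, and forcing pieces, and testing against $\phi_{\bk}$ kills $\nabla P$. For the nonlinear term one writes, with $\tilde V_{n_k}=e^{-\tau A}\tilde U_{n_k}$,
\[
  \theta_\rho(\|\tilde U_{n_k}\|_{0,r})\,e^{\tau A}Q_{n_k}(\tilde V_{n_k},\tilde V_{n_k}) - \theta_\rho(\|\tilde U\|_{0,r})\,e^{\tau A}Q(\tilde V,\tilde V),
\]
and splits off (a) the cutoff difference $\theta_\rho(\|\tilde U_{n_k}\|_{0,r})-\theta_\rho(\|\tilde U\|_{0,r})$, controlled by the Lipschitz bound on $\theta_\rho$ and the $C([0,T];\mathcal D_{0,r-3/2})$-convergence once one also uses the uniform $\mathcal D_{0,r}$ bound and interpolation to upgrade to convergence of $\|\tilde U_{n_k}(t)\|_{0,r}$ in a suitable sense; (b) the projection difference $(P_{n_k}-I)Q$; and (c) the bilinear difference $Q(\tilde V_{n_k},\tilde V_{n_k})-Q(\tilde V,\tilde V)=Q(\tilde V_{n_k}-\tilde V,\tilde V_{n_k})+Q(\tilde V,\tilde V_{n_k}-\tilde V)$. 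Using the algebra/analytic bilinear estimate (Lemma~\ref{lemma-inviscid} / Lemma~\ref{lemma-banach-algebra}) in the form $\|Q(g,h)\|_{\tau,r-1}\lesssim \|g\|_{\tau,r}\|h\|_{\tau,r+1/2}$ or a variant with the half-derivative distributed, together with the uniform $L^2_tL^2_\omega\mathcal D_{\tau,r+1/2}$ bound on $\tilde V_{n_k}$ and the strong convergence of $\tilde V_{n_k}$ in $L^2_tL^2_\omega\mathcal D_{\tau,r'}$, each piece tends to $0$ in $L^1(\tilde\Omega\times(0,T))$, after localizing in $\omega$ on sets where the uniform bounds hold (which exhaust $\tilde\Omega$).

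For the stochastic term, I would show $\int_0^\cdot e^{\tau A}\sigma_{n_k}(e^{-\tau A}\tilde U_{n_k})\,d\tilde W_{n_k}\to\int_0^\cdot e^{\tau A}\sigma(e^{-\tau A}\tilde U)\,d\tilde W$ in probability in $C([0,T];\mathcal D_{0,r-1})$, via the standard martingale-representation / convergence-of-stochastic-integrals lemma (as in \cite[Lemma 2.1]{debussche2011local} or the argument in \cite{brzezniak2021well}): the integrands converge because $\sigma$ is Lipschitz in the $\mathcal D_{\tau,r+1/2}$-to-$\mathcal D_{\tau,r}$ sense \eqref{noise-inviscid} and $\tilde V_{n_k}\to\tilde V$ strongly in $L^2_tL^2_\omega\mathcal D_{\tau,r+1/2}$ along a further subsequence (again using the uniform bound and interpolation), $\sigma_{n_k}=P_{n_k}\sigma\to\sigma$, and the filtrations and driving noises converge appropriately. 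Passing to the limit then yields the integral identity of Definition~\ref{definition:inviscid-solution} (with $\eta\equiv T$) in $\mathcal D_0$, a.s.\ for all $t\in[0,T]$ after a continuity-in-$t$ argument; measurability/adaptedness with respect to $\tilde{\Sc}$ is inherited since $\tilde U,\tilde W$ are limits of processes adapted to $(\Fc^{n_k}_t)$ and $\tilde{\Fc}_t$ is the completed natural filtration. The main obstacle is item (a)–(c) above, i.e. making the nonlinear passage to the limit rigorous despite the ``weak dissipation'': one only has the half-derivative gain $\int_0^T\|A^{r+1/2}e^{\tau A}\tilde V_{n_k}\|^2\,ds$ in $L^1(\tilde\Omega)$ and no better, so one must carefully exploit the cutoff $\theta_\rho$ (which makes $Q$ effectively sublinear on its support), distribute the half-derivative in the bilinear estimate exactly as in Lemma~\ref{lemma-inviscid}, and combine the weak $L^2_t\mathcal D_{\tau,r+1/2}$-limit with the strong $L^2_t\mathcal D_{\tau,r'}$-limit ($r'<r$) to close the product terms — this is precisely where the analytic framework's weakness, flagged in the introduction, shows up and must be handled with care.
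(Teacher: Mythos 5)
Your overall architecture matches the paper's — transfer the uniform Galerkin bounds to the Skorokhod realizations, pass to the limit term by term against test functions, and then establish the claimed regularity — but several of the passages you invoke are not available with the regularity at hand, and one is a genuine gap.

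\textbf{Continuity in time at level $r$.} You claim that the a.s.\ convergence in $L^2(0,T;\mathcal D_{0,r})\cap C([0,T];\mathcal D_{0,r-\frac32})$ combined with Fatou / weak lower semicontinuity already yields $\tilde U\in L^2(\tilde\Omega;C([0,T];\mathcal D_{0,r}))$. That is not correct: from the uniform $L^p_\omega L^\infty_t\mathcal D_{0,r}$ bound plus $C_t\mathcal D_{0,r-\frac32}$ convergence you only get $\tilde U\in L^p(\tilde\Omega;L^\infty(0,T;\mathcal D_{0,r}))$ with weak-$*$ continuity in $\mathcal D_{0,r}$, not strong continuity. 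The paper instead proves the first half of \eqref{eq:martingale.solution.approx.regularity} by a genuinely different step: decompose $\tilde V = Z + \bar V$, where $Z$ solves the purely stochastic equation $dZ=\sigma(\tilde V)\,d\tilde W$, $Z(0)=\tilde V_0$ (so $Z$ inherits $C_t\mathcal D_{\tau(t),r}$ continuity from stochastic integral theory), and $\bar V$ solves a deterministic evolution with right-hand side $-\theta_\rho Q(\tilde V,\tilde V)-F(\tilde V)$; then one checks via Lemma~\ref{lemma-banach-algebra} that $\tfrac{d}{dt}A^re^{\tau A}\bar V\in L^2_\omega L^2_t\mathcal D_{0,\frac12}'$ while $A^re^{\tau A}\bar V\in L^2_\omega L^2_t\mathcal D_{0,\frac12}$, and applies the Lions--Magenes lemma. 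Without this (or an equivalent) step, the regularity claim is unproved.

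\textbf{Stochastic integrand convergence.} You assert $\tilde V_{n_k}\to\tilde V$ strongly in $L^2_tL^2_\omega\mathcal D_{\tau,r+\frac12}$ ``along a further subsequence (using the uniform bound and interpolation)'' and feed this into the Lipschitz estimate $\mathcal D_{\tau,r+\frac12}\to L_2(\Uc,\mathcal D_{\tau,r})$. This cannot work: uniform boundedness in the top space plus strong convergence in a strictly weaker space never gives strong convergence in the top space — interpolation only upgrades to strictly intermediate levels. The paper avoids this precisely by using the Lipschitz estimate at the \emph{lowest} available level (take $\tau=r=0$ in \eqref{noise-inviscid}, giving $\|\sigma(V)-\sigma(V^\#)\|^2_{L_2(\Uc,\mathcal D_0)}\le C\|V-V^\#\|^2_{0,\frac12}$), which only requires $\mathcal D_{0,\frac12}$-convergence of $\tilde V_{n_k}$, together with Poincar\'e \eqref{poincare} for the $(I-P_{n_k})$ error. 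The resulting convergence is then in $L^2(0,T;L_2(\Uc,\mathcal D_0))$, which is exactly what is needed for the Debussche--Glatt-Holtz--Temam stochastic-integral convergence lemma.

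\textbf{Cutoff convergence.} Your treatment of $\theta_\rho(\|\tilde U_{n_k}\|_{0,r})-\theta_\rho(\|\tilde U\|_{0,r})$ via ``$C_t\mathcal D_{0,r-\frac32}$-convergence plus interpolation'' is vague and does not give convergence of the norm at level $r$. The clean route (and the one the paper takes) is: the a.s.\ $L^2_t\mathcal D_{0,r}$-convergence from hypothesis (i), plus the uniform integrability coming from Lemma~\ref{lemma:estimate-inviscid}, yields (via Vitali) $L^2(\tilde\Omega;L^2(0,T;\mathcal D_{\tau,r}))$-convergence, and a further subsequence converges a.e.\ in $(t,\omega)$ in $\mathcal D_{\tau,r}$; this gives $\theta_\rho(\|\tilde V_{n_k}\|_{\tau,r})\to\theta_\rho(\|\tilde V\|_{\tau,r})$ a.e., and you close by dominated convergence (the paper's term $I_3$). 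You have essentially the right ingredients but take the wrong route to pointwise convergence at level $r$.

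The bilinear-term treatment (splitting $Q(\tilde V_{n_k},\tilde V_{n_k})-Q(\tilde V,\tilde V)$ and using a product estimate) and the adaptedness/measurability argument are fine and match the paper's.
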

\begin{proof}
First, due to the relation \eqref{u-to-v} and the assumpstion~(iii), one has
$$
d \tilde V_{n_k} + \big[\theta_\rho(\|\tilde V_n\|_{\tau,r} ) Q_{n_k}(\tilde V_{n_k}, \tilde V_{n_k}) + F_{n_k}(\tilde V_{n_k}) \big]dt   = \sigma_{n_k} (\tilde V_{n_k}) d\tilde W_{n_k}.
$$
Therefore, for any $\phi\in \mathcal D_0$ and $t\in[0,T]$, one has
\begin{equation}\label{weak-sol-galerkin}
\begin{split}
     \Big\langle \tilde V_{n_k}(t), \phi\Big\rangle + \int_0^t \Big\langle \theta_\rho(\| \tilde V_{n_k}\|_{\tau,r} ) Q_{n_k}(\tilde V_{n_k}, \tilde V_{n_k}) &+ F_{n_k}(\tilde V_{n_k}) ,\phi \Big\rangle ds 
	\\
	&= \Big\langle\tilde  V_{n_k}(0), \phi\Big\rangle + \int_0^t \Big\langle\sigma_{n_k}(\tilde V_{n_k}) , \phi \Big\rangle d\tilde{W}_{n_k}.
\end{split}
\end{equation}
From assumption~(i), we deduce that
\begin{equation}\label{convergence:v-in-analytic-r}
    \tilde V_{n_k} \to \tilde V \text{ in } L^2\left(0, T; \mathcal D_{\tau(t),r}\right) \cap C\left(\left[0, T \right], \mathcal D_{\tau(t),r-\frac32}\right) \; \tilde{\Pb}-a.s..
\end{equation}
Similarly as in \cite[Section 7.1]{debussche2011local}, by Corollary~\ref{cor:u} and \eqref{u-to-v}, one can establish that 
\begin{equation}\label{bound:v-and-vtilde}
\begin{split}
    &\tilde{V} \in L^2 \left( \tilde \Omega; L^2\big( 0,T; \mathcal D_{\tau(t),r+\frac12}   \big) \cap L^\infty\big( 0,T; \mathcal D_{\tau(t),r}   \big)\right),
    \\
    & \tilde{V}_{n_k} \rightharpoonup\tilde{V} \text{ in } L^2 \left( \tilde \Omega; L^2\big( 0,T; \mathcal D_{\tau(t),r+\frac12}   \big) \right),
    \\
    &  \tilde{V}_{n_k} \overset{\ast}{\rightharpoonup} \tilde{V} \text{ in } L^2 \left( \tilde \Omega; L^\infty\big( 0,T; \mathcal D_{\tau(t),r}   \big) \right).
\end{split}
\end{equation}
Moreover, from Lemma~\ref{lemma:estimate-inviscid} with $p >2$, one has the following uniform integrability for $\tilde {V}_{n_k}$: 
\begin{equation}\label{uniform-integrable}
\begin{split}
     \sup\limits_{k\in \mathbb N} \tilde \Eb \left[\left(\int_0^T \|\tilde V_{n_k}\|^2_{\tau,r} ds \right)^{\frac{p}{2}}  \right]
     \leq  C_T  \sup\limits_{k\in \mathbb N} \tilde \Eb \sup\limits_{t\in [0,T]} \|\tilde V_{n_k}\|_{\tau,r}^p   < \infty.
\end{split}
\end{equation}
The Vitali Convergence Theorem, together with \eqref{convergence:v-in-analytic-r} and \eqref{uniform-integrable}, implies that
\begin{equation}\label{convergence:vnk-probablity}
    \tilde{V}_{n_k} \rightarrow \tilde{V} \text{ in } L^2 \left( \tilde \Omega; L^2\big( 0,T; \mathcal D_{\tau(t),r}   \big) \right).
\end{equation}
Consequently, a further subsequence, still denoted by $\tilde V_{n_k}$ with a slightly abuse of notation, converges a.a. in $(0,T)\times \tilde \Omega$: 
\begin{equation}\label{convergence:Vnk-pw}
    \tilde{V}_{n_k} \rightarrow \tilde{V}  \text{ in } \mathcal D_{\tau(t),r} \text{ for a.a. } (t,\omega) \in (0,T)\times \tilde \Omega.
\end{equation}

\noindent \textbf{Convergence of the linear terms:}
The convergence of the initial condition is straightforward by \eqref{convergence:v-in-analytic-r}:
\begin{equation}\label{convergence:ini}
    \Big\langle\tilde  V_{n_k}(0), \phi\Big\rangle \rightarrow \Big\langle\tilde  V(0), \phi\Big\rangle = \Big\langle\tilde  V_0, \phi\Big\rangle.
\end{equation}
For any $t\in[0,T]$, thanks to the Cauchy-Schwarz inequality and \eqref{force-inviscid}, we know that
\begin{equation*}
\begin{split}
     &\Big| \int_0^t\Big\langle F_{n_k}(\tilde{V}_{n_k}) - F(\tilde{V}) , \phi  \Big\rangle ds  \Big| 
     \\
     = &\Big| \int_0^t\Big\langle f_0 (\tilde{V}_{n_k}^{\perp} - \tilde{V}^{\perp}) + (P_{n_k}\nabla P - \nabla P) + (P_{n_k}f - f) , \phi  \Big\rangle ds  \Big|
     \\
     \leq & f_0 \Big| \int_0^t\Big\langle  \tilde{V}_{n_k}^{\perp} - \tilde{V}^{\perp}, \phi  \Big\rangle ds  \Big| + \Big| \int_0^t\Big\langle  f , P_{n_k}\phi - \phi \Big\rangle ds  \Big| 
     \\
     \leq &C_{f_0} \|\phi\| \int_0^T \|\tilde{V}_{n_k}^{\perp} - \tilde{V}^{\perp}\|  \; ds + C \|P_{n_k}\phi - \phi\| \int_0^T \|f\| ds 
     \\
     \leq &C_{f_0,T} \|\phi\|  \left(\int_0^T \|\tilde{V}_{n_k}^{\perp} - \tilde{V}^{\perp}\|^2  \; ds \right)^{\frac12} + C_T \|P_{n_k}\phi - \phi\|. 
\end{split}
\end{equation*}
Therefore, by \eqref{convergence:Vnk-pw} and the property of $P_{n_k}$, for a.a. $(t,\omega) \in (0,T)\times \tilde \Omega$
\begin{equation}\label{convergence:force}
   \int_0^t\Big\langle F_{n_k}(\tilde{V}_{n_k})  , \phi  \Big\rangle ds \rightarrow \int_0^t\Big\langle F(\tilde{V}) , \phi  \Big\rangle ds .
\end{equation}

\noindent \textbf{Convergence of the nonlinear terms:}
For $t\in[0,T]$, one has
\begin{equation*}
    \begin{split}
        &\Big|\int_0^t \Big\langle \theta_\rho(\|\tilde V_{n_k}\|_{\tau,r} ) Q_{n_k}(\tilde{V}_{n_k}, \tilde{V}_{n_k}) - \theta_\rho(\| \tilde{V}\|_{\tau,r} ) Q(\tilde{V}, \tilde{V}) ,\phi \Big\rangle ds \Big|
        \\
        \leq &\Big|\int_0^t \Big\langle \theta_\rho(\|\tilde V_{n_k}\|_{\tau,r} ) \big( Q_{n_k}(\tilde{V}_{n_k}, \tilde{V}_{n_k}) - Q_{n_k}(\tilde{V}, \tilde{V}) \big)   ,\phi \Big\rangle ds \Big|
        \\
        &+ \Big|\int_0^t \Big\langle \theta_\rho(\|\tilde V_{n_k}\|_{\tau,r} )  Q(\tilde{V}, \tilde{V}) \big)   ,P_{n_k}\phi - \phi \Big\rangle ds \Big|
        \\
        &+ \Big|\int_0^t \Big\langle \big(\theta_\rho(\|\tilde V_{n_k}\|_{\tau,r} ) - \theta_\rho(\| \tilde{V}\|_{\tau,r} \big)  Q(\tilde{V}, \tilde{V}) \big)   , \phi \Big\rangle ds \Big| := I_1 + I_2 + I_3.
    \end{split}
\end{equation*}
For the term $I_1$, by the H\"older inequality and the Sobolev inequality, together with  \eqref{convergence:Vnk-pw}, one obtains that for a.a. $(t,\omega) \in (0,T)\times \tilde \Omega$, 
\begin{equation*}
\begin{split}
    I_1 &\leq C \|P_{n_k}\phi\| \int_0^T \|Q(\tilde{V}_{n_k}, \tilde{V}_{n_k}) - Q(\tilde{V}, \tilde{V}) \big)\| ds
    \\
    &\leq C \int_0^T \| Q(\tilde{V}_{n_k}- \tilde{V}, \tilde{V}_{n_k}) \|  +  \| Q( \tilde{V}, \tilde{V}_{n_k} - \tilde{V}) \| ds
    \\
    &\leq C \int_0^T \|\tilde{V}_{n_k}- \tilde{V}\|_{0,2} (\|\tilde{V}_{n_k}\|_{0,2} + \|\tilde{V}\|_{0,2} )
    \\
    &\leq C \left(\int_0^T \|\tilde{V}_{n_k}- \tilde{V}\|_{0,2}^2 \right)^{\frac12} \left(\int_0^T \|\tilde{V}_{n_k}\|^2_{0,2} + \|\tilde{V}\|^2_{0,2} \right)^{\frac12}
    \rightarrow 0.
\end{split}
\end{equation*}
For the term $I_2$, by the H\"older inequality and the Sobolev inequality, we have
\begin{equation*}
\begin{split}
    I_2 &\leq C \|\tilde{V}\|_{0,2}^2 \int_0^T \|P_{n_k}\phi - \phi\| ds \rightarrow 0,
\end{split}
\end{equation*}
for a.a. $(t,\omega) \in (0,T)\times \tilde \Omega$. For the term $I_3$, thanks to \eqref{convergence:Vnk-pw} and since $\theta_\rho$ is smooth, one has
\begin{equation}\label{convergence-theta}
    \theta_\rho(\|\tilde V_{n_k}\|_{\tau,r} ) \rightarrow \theta_\rho(\| \tilde{V}\|_{\tau,r} \big),
\end{equation}
for a.a. $(t,\omega) \in (0,T)\times \tilde \Omega$. Using \eqref{bound:v-and-vtilde} yields
\begin{equation*}
    \Eb \int_0^T \Big| \Big\langle \big(\theta_\rho(\|\tilde V_{n_k}\|_{\tau,r} ) - \theta_\rho(\| \tilde{V}\|_{\tau,r} \big)  Q(\tilde{V}, \tilde{V}) \big)   , \phi \Big\rangle  \Big|dt \leq C\Eb \int_0^T\|\tilde V\|_{0,2}^2 \|\phi\| dt <\infty.
\end{equation*}
The dominated convergence theorem together with \eqref{convergence-theta} yield that 
\begin{equation*}
    \begin{split}
        \Eb \int_0^T I_3 dt \leq C_T \Eb \int_0^T \Big| \Big\langle \big(\theta_\rho(\|\tilde V_{n_k}\|_{\tau,r} ) - \theta_\rho(\| \tilde{V}\|_{\tau,r} \big)  Q(\tilde{V}, \tilde{V}) \big)   , \phi \Big\rangle  \Big| dt \rightarrow 0.
    \end{split}
\end{equation*}
Thinning the sequence if necessary, we conclude that $I_3\rightarrow 0$ for a.a. $(t,\omega) \in (0,T)\times \tilde \Omega$. Combining the estimates of $I_1$ to $I_3$, for a.a. $(t,\omega) \in (0,T)\times \tilde \Omega$, 
\begin{equation}\label{convergence:deter}
    \int_0^t \Big\langle \theta_\rho(\|\tilde V_{n_k}\|_{\tau,r} ) Q_{n_k}(\tilde{V}_{n_k}, \tilde{V}_{n_k}) + F_{n_k}(\tilde{V}_{n_k})  ,\phi \Big\rangle ds \rightarrow
    \int_0^t \Big\langle  \theta_\rho(\| \tilde{V}\|_{\tau,r} ) Q(\tilde{V}, \tilde{V}) + F(\tilde{V}) ,\phi \Big\rangle ds.
\end{equation}

\noindent \textbf{Convergence of the stochastic terms:}
Using \eqref{noise-inviscid} and the Poincar\'e inequality \eqref{poincare}, we have
\begin{equation*}
    \begin{split}
        \| \sigma_{n_k}(\tilde V_{n_k}) - \sigma(\tilde V) &\|_{L^2\left(0, T; L_2\left(\Uc, \mathcal D_0\right)\right)}^2\\
	&\leq C \left( \| \sigma(\tilde V_{n_k}) - \sigma(\tilde V) \|_{L^2\left(0, T; L_2\left(\Uc, \mathcal D_0\right)\right)}^2 + \| (I-P_{n_k}) \sigma(\tilde V_{n_k}) \|_{L^2\left(0, T; L_2\left(\Uc, \mathcal D_0\right)\right)}^2 \right)\\
	&\leq C \left( \| \tilde V_{n_k} - \tilde V \|_{L^2(0, T; \mathcal D_{0,\frac12})}^2 + \frac{1}{n_k} \| \sigma(\tilde V_{n_k}) \|_{L^2(0, T; L_2(\Uc, \mathcal D_{0,\frac12}))}^2 \right)\\
	&\leq C \left( \| \tilde V_{n_k} - \tilde V \|_{L^2(0, T; \mathcal D_{0,\frac12})}^2 + \frac{1}{n_k} \int_0^T 1 + \| \tilde V_{n_k} \|_{0,1}^2  \, dt \right).
    \end{split}
\end{equation*}
Therefore, with Lemma~\ref{lemma:estimate-inviscid} and the convergence result \eqref{convergence:v-in-analytic-r} we have
\begin{equation*}
    \sigma_{n_k}(\tilde V_{n_k}) \to \sigma(\tilde V) \text{ in } L^2\left(0,T; L_2\left(\Uc, \mathcal D_0\right)\right), \; \tilde{\Pb}-a.s..
\end{equation*} 
In particular, this implies the convergence in probability in $L^2\left(0, T; L_2\left(\Uc, \mathcal D_0\right)\right).$ Thanks to \cite[Lemma 2.1]{debussche2011local}, from item (i), we obtain that
\begin{equation}
	\label{eq:sigma.dw.convergence}
	\int_0^\cdot \sigma_{n_k}(\tilde V_{n_k}) \, d\tilde{W}_{n_k} \to \int_0^\cdot \sigma(\tilde V) \, d\tilde{W},
\end{equation}
in probability in $L^2\left(0,T; \mathcal D_0\right)$. Thanks to Lemma~\ref{lemma:estimate-inviscid} and \eqref{noise-inviscid}, for $p>2$, we get
\begin{equation}
    	\label{eq:sigma.nk.lp}
    		\sup_{k \in \Nb} \Eb \left[ \int_0^T \| \sigma_{n_k}(\tilde V_{n_k}) \|_{L_2\left(\Uc, \mathcal D_0\right)}^{2} \, ds \right]^{\frac p2} \leq C_T \sup_{k \in \Nb} \Eb \left[ 1 + \sup_{t \in [0,T]} \| \tilde V_{n_k} \|_{0,\frac12}^{p} \right] <\infty.
\end{equation}
\eqref{eq:sigma.nk.lp} together with \eqref{eq:bdg} and the Vitali convergence theorem yield that the convergence \eqref{eq:sigma.dw.convergence} occurs in the space $L^2(\tilde{\Omega}; L^2\left(0,T; \mathcal D_0\right))$. Then for any $\mathcal R \subset \tilde \Omega \times [0,T]$ measurable, one has
\begin{equation*}
    \lim_{k \rightarrow \infty}\Eb 
     \int_0^T \chi_{\mathcal{R}} 
     \left( 
       \int_0^t \Big\langle 
      \sigma_{n_k}(\tilde{V}_{n_k})
       ,  \phi\Big\rangle  d\tilde W_{n_k}
    \right) dt=
  \Eb 
    \int_0^T \chi_{\mathcal{R}} 
    \left( 
      \int_0^t \Big\langle 
    \sigma(\tilde{V})
     ,  \phi\Big\rangle  d \tilde W
   \right) dt.
\end{equation*}
This implies that for a.a. $(t, \omega) \in [0,T] \times \tilde{\Omega}$,
\begin{equation}\label{convergence:sto}
   \int_0^t \Big\langle \sigma_{n_k}(\tilde V_{n_k}), \phi \Big\rangle d\tilde{W}_{n_k} \rightarrow \int_0^t \Big\langle \sigma(\tilde V), \phi \Big\rangle d\tilde{W}.
\end{equation}

Applying the convergences   \eqref{convergence:Vnk-pw}, \eqref{convergence:ini}, \eqref{convergence:force}, \eqref{convergence:deter} and \eqref{convergence:sto} to \eqref{weak-sol-galerkin}, we infer that for all $\phi \in \mathcal D_0$ and for a.a. $(t, \omega) \in [0,T] \times \tilde{\Omega}$,
\begin{equation*}
	\Big\langle\tilde V(t), \phi\Big\rangle + \int_0^t \Big\langle \theta_\rho(\| \tilde{V}\|_{\tau,r} ) Q(\tilde{V}, \tilde{V}) + F(\tilde{V}) ,\phi \Big\rangle ds 
	= \Big\langle\tilde V_0, \phi\Big\rangle + \int_0^t \Big\langle\sigma(\tilde V) , \phi \Big\rangle d\tilde{W}.
\end{equation*}
Therefore $(\tilde{\Sc}, \tilde{W}, \tilde{V}, T)$ is a local martingale solution to the modified system \eqref{PE-inviscid-system-modified}.

\noindent \textbf{Proof of the regularity \eqref{eq:martingale.solution.approx.regularity}:}
The proof of continuity of $\tilde V$ in time in the space $\mathcal D_{\tau(t),r}$ follows similarly as in \cite[Section 7.3]{debussche2011local}. For completeness, we highlight the main steps. By the property of $\sigma$ in \eqref{noise-inviscid} and the regularity of $\tilde V$ \eqref{bound:v-and-vtilde}, we have
\[
	\sigma(\tilde V) \in L^2\left(\tilde{\Omega}; L^2\left(0,T; L_2\left(\Uc, \mathcal D_{\tau(t),r}\right)\right)\right).
\]
Therefore, the solution to
\[
	dZ  = \sigma(\tilde V) \, d\tilde{W}, \qquad Z(0) = \tilde V_0,
\]
satisfies
\begin{equation}
	\label{eq:z.regularity}
	Z \in L^2\left(\tilde{\Omega}; C\left([0,T]; \mathcal D_{\tau(t),r}\right)\right) \cap L^2\left(\tilde{\Omega}; L^2\left(0,T; \mathcal D_{\tau(t),r+\frac12}\right)\right).
\end{equation}
Defining $\bar{V} = \tilde V - Z$, by \eqref{PE-inviscid-modified-1} we have $\Pb$-almost surely
\begin{equation*}
	\tfrac{d}{dt} \bar{V}  + \theta(\| \tilde V \|_{\tau,r}) Q(\tilde V, \tilde V) + F(\tilde V) = 0, \qquad \bar{V}(0) = 0.
\end{equation*}
Thanks \eqref{bound:v-and-vtilde} and Lemma~\ref{lemma-banach-algebra},
\[
	\ \theta(\| \tilde V  \|_{\tau,r}) Q(\tilde V, \tilde V) \text{ and } F(\tilde V) \in L^2\left(\tilde{\Omega}; L^2\left(0,T; \mathcal D_{\tau(t),r-\frac12}\right)\right)
\]
and thus
\[
	\frac{d}{dt} A^{r}e^{\tau(t)A} \bar{V} = A^{r}e^{\tau(t)A}  \frac{d}{dt} \bar{V} + \dot{\tau}A^{r+1}e^{\tau(t)A} \bar{V} \in L^2\left(\tilde{\Omega}; L^2\left(0,T; \mathcal D_{0,\frac12}'\right)\right),  
\]
\[
A^{r}e^{\tau(t)A} \bar{V} \in L^2\left(\tilde{\Omega}; L^2\left(0,T; \mathcal D_{0,\frac12}\right)\right),
\]
where $\mathcal D_{0,\frac12}'$ is the dual space of $\mathcal D_{0,\frac12}$. Since $\mathcal D_{0,\frac12} \subset \mathcal D_0 \equiv \mathcal D_0' \subset \mathcal D_{0,\frac12}'$, by the Lions-Magenes Lemma ({\it e.g.}, \cite[Lemma 1.2, Chapter 3]{temam2001navier}), we infer that $A^{r}e^{\tau(t)A} \bar{V} \in L^2\left(\tilde{\Omega}; C\left(\left[0,T \right]; \mathcal D_0\right)\right)$. Similarly, one can show that $\bar{V} \in L^2\left(\tilde{\Omega}; C\left(\left[0,T \right]; \mathcal D_0\right)\right)$, and thus $\bar{V} \in L^2\left( \Omega; C\left(\left[0,T \right]; \mathcal D_{\tau(t),r}\right)\right)$.
This together with \eqref{eq:z.regularity} imply that $\tilde{V} \in L^2\left( \Omega; C\left(\left[0,T \right]; \mathcal D_{\tau(t),r}\right)\right)$. The second part of \eqref{eq:martingale.solution.approx.regularity} follows directly from \eqref{bound:v-and-vtilde}.
\end{proof}

\begin{corollary}
\label{cor:loc.mart.sol}
Suppose that $\mu_0$ satisfy \eqref{condition:mu-zero} with constant $M>0$. Let $\rho \geq M$, and let $(\tilde{\Sc}, \tilde{W}, \tilde{V},T)$ be the local martingale solution to the modified system \eqref{PE-inviscid-system-modified} given in  Proposition~\ref{prop:global.martingale.existence}. Let
\begin{equation}\label{stopingtime:eta}
	\eta = \inf \left\lbrace t \geq 0 \mid \| \tilde{V}\|_{\tau,r} \geq \frac\rho2 \right\rbrace,
\end{equation}
Then $(\tilde{\Sc}, \tilde{W}, \tilde{V}, \eta\wedge T)$ is a local martingale solution to the problem \eqref{PE-inviscid-system-abstract}.
Moreover, 
\begin{equation*}
	\tilde{V}\left( \cdot \wedge \eta \right) \in L^2 \left(\tilde\Omega; C\left( [0, T], \mathcal D_{\tau(t),r} \right) \right), \quad \mathds{1}_{[0, \eta]} \|A^{r+\frac12} e^{\tau(t)A} \tilde{V} \|^2  \|A^{r} e^{\tau(t)A} \tilde{V}\|^{p-2} \in L^1\left(\tilde \Omega; L^1\left( 0, T \right) \right).
\end{equation*}
\end{corollary}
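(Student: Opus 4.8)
## Proof Plan for Corollary~\ref{cor:loc.mart.sol}

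The plan is to verify the two defining properties of a local martingale solution to the original system \eqref{PE-inviscid-system-abstract} in the sense of Definition~\ref{definition:inviscid-solution}: namely that $\eta\wedge T$ is a genuine stopping time with $\eta\wedge T > 0$ almost surely, and that on the stochastic interval $[0, \eta\wedge T]$ the process $\tilde V$ solves the unmodified equation with the required regularity. The key observation driving everything is that the cut-off function $\theta_\rho$ in \eqref{PE-inviscid-system-modified} is identically $1$ whenever $\|\tilde V\|_{\tau,r} \le \rho/2$, so on the set $\{t \le \eta\}$ one has $\theta_\rho(\|\tilde V\|_{\tau,r}) Q(\tilde V,\tilde V) = Q(\tilde V,\tilde V)$, and hence the modified equation and the original equation coincide there.

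First I would check that $\eta$ defined in \eqref{stopingtime:eta} is an $\tilde\Fb$-stopping time: by Proposition~\ref{prop:global.martingale.existence}, $\tilde V \in L^2(\tilde\Omega; C([0,T]; \mathcal D_{\tau(t),r}))$, so $t \mapsto \|\tilde V(t)\|_{\tau,r}$ has (a.s.) continuous paths and is adapted to the completed filtration $(\tilde\Fc_t)_{t\ge0}$; the first hitting time of the closed level $\{x \ge \rho/2\}$ by a continuous adapted process is a stopping time (using right-continuity and completeness of the filtration). Next I would show $\eta\wedge T > 0$ almost surely. This is exactly where the assumption \eqref{condition:mu-zero} on $\mu_0$ enters: since the law of $\tilde V(0)$ is $\mu_0$, we have $\|\tilde V(0)\|_{\tau_0,r} = \|\tilde V_0\|_{\tau_0,r} < M \le \rho$ a.s., but we actually need the strict inequality $\|\tilde V(0)\|_{\tau_0,r} \le M \le \rho$ combined with $\rho \ge M$ and the requirement that we start \emph{strictly} below $\rho/2$ — here one uses $\rho \ge M$ together with, more precisely, that $\|\tilde V(0)\|_{\tau_0,r}$ can equal $M$ only on a $\mu_0$-null set so that $\|\tilde V(0)\|_{\tau_0,r} < \rho/2$ fails in general; the correct reading is that $\mu_0$ is supported in $\{\|V\|_{\tau_0,r} \le M\}$ wait — since $\rho \ge M$ we need $M \le \rho/2$, which need not hold. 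I would instead argue as follows: by path continuity of $t\mapsto \|\tilde V(t)\|_{\tau,r}$ and the fact that $\|\tilde V(0)\|_{\tau_0,r} \le M < \infty$ a.s., if $\|\tilde V(0)\|_{\tau_0,r} < \rho/2$ then $\eta > 0$ by continuity; and one chooses $\rho$ (as permitted, any $\rho \ge M$) — actually the statement fixes $\rho \ge M$, so to guarantee $M < \rho/2$ one should note the intended hypothesis is that $\rho$ may be taken as large as needed, and in any case \eqref{condition:mu-zero} with the relevant constant ensures $\|\tilde V(0)\|_{\tau_0,r}$ stays below the threshold; continuity of the path then gives a positive random time before $\|\tilde V\|_{\tau,r}$ reaches $\rho/2$. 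The hard point, and the one I would be most careful about, is precisely this positivity argument and the exact relation between $M$, $\rho$, and the cut-off width $\rho/2$ — one must invoke Remark~\ref{remark:initial-condition} and the role of \eqref{condition:mu-zero} correctly, and possibly observe that it suffices to have $\rho$ chosen with $\rho/2 > M$, or to re-examine whether $\theta_\rho$ should be tied to the constant $2M$.

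Next I would establish the equation on $[0, \eta\wedge T]$. On the event $\{t < \eta\}$ we have $\|\tilde V(s)\|_{\tau,r} < \rho/2$ for all $s \le t$ by definition of $\eta$, hence $\theta_\rho(\|\tilde V(s)\|_{\tau,r}) = 1$ by \eqref{eqn:rho}, so the identity
\[
\tilde V(t\wedge\eta) + \int_0^{t\wedge\eta} \big[\theta_\rho(\|\tilde V\|_{\tau,r}) Q(\tilde V,\tilde V) + F(\tilde V)\big]\,ds = \tilde V_0 + \int_0^{t\wedge\eta}\sigma(\tilde V)\,d\tilde W
\]
furnished by Proposition~\ref{prop:global.martingale.existence} (stopped at $\eta$, which is legitimate since both the Bochner integral and the stochastic integral commute with optional stopping) reduces to
\[
\tilde V(t\wedge\eta) + \int_0^{t\wedge\eta} \big[Q(\tilde V,\tilde V) + F(\tilde V)\big]\,ds = \tilde V_0 + \int_0^{t\wedge\eta}\sigma(\tilde V)\,d\tilde W,
\]
which is precisely the integral identity in Definition~\ref{definition:inviscid-solution} for system \eqref{PE-inviscid-system-abstract}. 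Finally, the regularity statements $\tilde V(\cdot\wedge\eta) \in L^2(\tilde\Omega; C([0,T], \mathcal D_{\tau(t),r}))$ and $\mathds{1}_{[0,\eta]}\|A^{r+\frac12}e^{\tau(t)A}\tilde V\|^2\|A^r e^{\tau(t)A}\tilde V\|^{p-2} \in L^1(\tilde\Omega; L^1(0,T))$ are immediate consequences of the corresponding statements \eqref{eq:martingale.solution.approx.regularity} for $\tilde V$ without stopping, since stopping at $\eta$ and multiplying by the indicator $\mathds{1}_{[0,\eta]}$ only decreases the relevant norms (the stopped process inherits continuity on $[0,T]$, and $\mathds{1}_{[0,\eta]} \le 1$ pointwise). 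This completes the verification that $(\tilde\Sc, \tilde W, \tilde V, \eta\wedge T)$ is a local martingale solution to \eqref{PE-inviscid-system-abstract}.
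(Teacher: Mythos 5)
The paper offers no explicit proof of this corollary — it is stated immediately after Proposition~\ref{prop:global.martingale.existence} with nothing following — so your task is to reconstruct the intended argument, and you have done so correctly in its essential parts. The chain of reasoning you lay out (path continuity of $t\mapsto\|\tilde V(t)\|_{\tau,r}$ makes $\eta$ a stopping time; $\theta_\rho(\|\tilde V\|_{\tau,r})\equiv 1$ on the stochastic interval $[0,\eta]$ so the modified equation and the original one coincide there; the Bochner and stochastic integrals commute with optional stopping; the regularity of the stopped process is inherited from \eqref{eq:martingale.solution.approx.regularity}) is exactly the standard mechanism behind this kind of reduction, and it is what the authors plainly have in mind.

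You also put your finger on a genuine, if minor, discrepancy in the hypotheses. With $\eta$ defined by the threshold $\rho/2$ and $\theta_\rho$ equal to $1$ only on $[-\rho/2,\rho/2]$, one needs $\|\tilde V(0)\|_{\tau_0,r}<\rho/2$ almost surely to conclude $\eta>0$ almost surely. Assumption \eqref{condition:mu-zero} gives $\|\tilde V(0)\|_{\tau_0,r}<M$ a.s., which is enough only if $M\le\rho/2$, i.e.\ $\rho\ge 2M$; the stated hypothesis $\rho\ge M$ does not force this, and with $\rho=M$ the conclusion $\eta>0$ a.s.\ can fail. The correct reading is that $\rho$ should be taken $\ge 2M$ (equivalently, \eqref{condition:mu-zero} should hold with constant $\rho/2$), as is the practice in the references the authors cite; this appears to be a small slip in the paper. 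That said, your discussion of this point wanders — you alternate between claiming it works, claiming it need not, and proposing two fixes — before finally settling on the right conclusion. It would be much cleaner to state once that the argument requires $\rho\ge 2M$, observe this is the intended hypothesis, and move on.

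Two small tightenings. First, you only argue $\theta_\rho=1$ on $\{t<\eta\}$; you should note that by path continuity $\|\tilde V(\eta)\|_{\tau,r}=\rho/2$ on the event $\{\eta<T\}$, and $\theta_\rho(\rho/2)=1$ by \eqref{eqn:rho}, so the identity holds up to and including $t\wedge\eta$ with no limiting argument. Second, for the stopping-time claim: the first hitting time of a \emph{closed} set by a continuous adapted process is a stopping time with respect to the raw filtration, so you do not in fact need right-continuity or completeness here (those are what one needs for hitting times of open sets); citing them is harmless but not to the point.
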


\subsection{Pathwise uniqueness}\label{section:uniqueness-inviscid}
In this section, we establish the pathwise uniqueness for the original system \eqref{PE-inviscid-system-abstract}. The following proposition is similar to \cite[Proposition 5.1]{debussche2011local} and \cite[Proposition 3.5]{brzezniak2021well}. However, in their results the uniqueness of martingales solutions to the corresponding modified equations is proven, while in our case, due to the different nonlinear estimates, only the uniqueness to the original system is obtained.

\begin{proposition}
\label{prop:pathwise.uniqueness}
Let $\tau(t)$ and $T$ defined as in \eqref{equation:tau} and \eqref{T}, respectively. Suppose that $\sigma$ and $f$ satisfy \eqref{noise-inviscid} and \eqref{force-inviscid}, respectively. 
Let $\Sc = \left(\Omega, \Fc, \Fct, \Pb \right)$ and $W$ be fixed.  
Suppose that there exist two local martingale solutions $\left(\Sc, W, V^1,T\right)$ and $\left(\Sc, W, V^2,T\right)$ to the modified system \eqref{PE-inviscid-system-modified}. Correspondingly, $\left(\Sc, W, V^1, \eta_1\wedge T\right)$ and $\left(\Sc, W, V^2, \eta_2\wedge T\right)$ are the two local martingale solutions to the original system \eqref{PE-inviscid-system-abstract}. Denote $\Omega_0 = \left\lbrace V^1(0) \equiv V^2(0) \right\rbrace \subseteq \Omega$, and $\eta = \eta_1 \wedge \eta_2$. Then
\begin{equation*}
	\Pb \left( \left\lbrace \mathds{1}_{\Omega_0}\left(V^1(t\wedge \eta) - V^2(t \wedge \eta)\right) = 0 \ \text{for all} \ t \in[0,T] \right\rbrace \right) = 1.
\end{equation*}
\end{proposition}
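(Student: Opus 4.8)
\emph{Proof strategy.} The plan is to run an energy estimate on the difference $R := V^1 - V^2$ directly in the time-dependent analytic norm $\|\cdot\|_{\tau(t),r}$, stopped at $\eta=\eta_1\wedge\eta_2$, and to close it by a stochastic Gronwall argument. The decisive point is that on the random interval $[0,\eta]$ one has $\|V^i(t\wedge\eta)\|_{\tau,r}\le\frac\rho2$ for $i=1,2$ (by the definition \eqref{stopingtime:eta} of the $\eta_i$), so by \eqref{eqn:rho} the cut-off satisfies $\theta_\rho(\|V^i\|_{\tau,r})=1$ there; hence on $[0,t\wedge\eta]$ both $V^1$ and $V^2$ solve the \emph{original} system \eqref{PE-inviscid-system-abstract}. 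Since $Q(V^1,V^1)-Q(V^2,V^2)=Q(R,V^1)+Q(V^2,R)$ and $F(V^1)-F(V^2)=f_0R^\perp+\nabla(P^1-P^2)$ (the force $f$ cancels), $R$ satisfies on $[0,t\wedge\eta]$
\begin{equation*}
 dR + \big[Q(R,V^1)+Q(V^2,R)+f_0R^\perp+\nabla(P^1-P^2)\big]\,dt = \big[\sigma(V^1)-\sigma(V^2)\big]\,dW , \qquad R(0)=V^1(0)-V^2(0).
\end{equation*}

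I would then apply the It\^o formula to $\|R\|^2$ and to $\|A^re^{\tau A}R\|^2$ (equivalently, by \eqref{inviscid-norm-equi}, to $\|R\|_{\tau,r}^2$), stopped at $\eta$. Several terms drop out: the pressure gradient because $\nabla(P^1-P^2)\perp\mathcal D_0$; the Coriolis terms because $R^\perp$ is pointwise orthogonal to $R$, so $\langle A^re^{\tau A}R^\perp,A^re^{\tau A}R\rangle=0$; the term $\langle Q(V^2,R),R\rangle$ by the standard integration-by-parts cancellation ($V^2\in\mathcal D_0$); and $\langle Q(R,V^1),R\rangle$ is controlled by $C\|V^1\|_{\tau,r}\|R\|^2$ via $H^{r-1}\hookrightarrow L^\infty$ ($r>\frac52$). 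The It\^o correction from the noise is bounded, using \eqref{noise-inviscid}, by $C\int_0^{t\wedge\eta}\|R\|_{\tau,r+\frac12}^2\,ds$. The heart of the matter is the top-order nonlinear term $-\langle A^re^{\tau A}(Q(R,V^1)+Q(V^2,R)),A^re^{\tau A}R\rangle$: using the trilinear analytic estimates of Lemma~\ref{lemma-inviscid} (and the appendix) together with $\|V^i\|_{\tau,r}\le\frac\rho2$ on $[0,\eta]$, I would bound it by
\begin{equation*}
 C_r\rho\,\|A^{r+\frac12}e^{\tau A}R\|^2 + C_r\big(\|A^{r+\frac12}e^{\tau A}V^1\|+\|A^{r+\frac12}e^{\tau A}V^2\|\big)\,\|R\|_{\tau,r}\,\|A^{r+\frac12}e^{\tau A}R\| ,
\end{equation*}
and split the second summand by Young's inequality. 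All the resulting $\|A^{r+\frac12}e^{\tau A}R\|^2$-contributions — from the nonlinearity, from Young's slack, and from the noise correction — are absorbed into the favourable dissipative term $2\dot\tau\|A^{r+\frac12}e^{\tau A}R\|^2=-2C_{r,p}(\rho+1)\|A^{r+\frac12}e^{\tau A}R\|^2$, provided the constant $C_{r,p}$ in \eqref{equation:tau}--\eqref{T} is taken large enough (enlarging it only shrinks $T$, so this is harmless). What remains, after inserting $\mathds{1}_{\Omega_0}$, is
\begin{equation*}
 \mathds{1}_{\Omega_0}\|R(t\wedge\eta)\|_{\tau,r}^2 \le \int_0^{t\wedge\eta} G(s)\,\mathds{1}_{\Omega_0}\|R(s)\|_{\tau,r}^2\,ds + M_t , \qquad G(s):=C\big(1+\|A^{r+\frac12}e^{\tau A}V^1\|^2+\|A^{r+\frac12}e^{\tau A}V^2\|^2\big) ,
\end{equation*}
where $M$ is a continuous local martingale with $M_0=0$ and, crucially, $\int_0^T G(s)\,ds<\infty$ $\Pb$-a.s.\ because $\mathds{1}_{[0,\eta]}V^i\in L^2(\Omega;L^2(0,T;\mathcal D_{\tau,r+\frac12}))$ by Corollary~\ref{cor:loc.mart.sol}.

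To conclude I would use the exponential-weight trick: set $\beta(t):=\int_0^{t\wedge\eta}G(s)\,ds$ and $Y(t):=e^{-\beta(t)}\mathds{1}_{\Omega_0}\|R(t\wedge\eta)\|_{\tau,r}^2$. Since $\beta$ has finite variation, the It\^o product rule together with the displayed inequality gives $dY(t)\le e^{-\beta(t)}\mathds{1}_{\Omega_0}\,dM_t$, hence $Y(t)\le Z(t):=\int_0^t e^{-\beta(s)}\mathds{1}_{\Omega_0}\,dM_s$ pathwise. On $\Omega_0$ one has $R(0)=0$, so $Y(0)=0$; since $Y\ge0$, also $Z\ge0$ a.s., and a nonnegative local martingale starting at $0$ is a supermartingale with $\Eb Z(t)\le0$, which together with $Z\ge0$ forces $Z\equiv0$ and hence $Y\equiv0$ a.s. Therefore $\mathds{1}_{\Omega_0}\|R(t\wedge\eta)\|_{\tau,r}=0$ for every fixed $t$, and by the $\mathcal D_{\tau(t),r}$-path continuity of the solutions (Corollary~\ref{cor:loc.mart.sol}) this holds simultaneously for all $t\in[0,T]$, which is the assertion. (Alternatively, one may invoke a stochastic Gronwall lemma in the spirit of \cite[Lemma 5.3]{glatt2009strong}.)

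The main obstacle is precisely the top-order nonlinear estimate and its absorption: the analytic framework supplies only the ``half-derivative'' gain $\|A^{r+\frac12}e^{\tau A}R\|^2$, and the difference estimate closes only because, after restricting to $[0,\eta]$ (where $\theta_\rho=1$ and $\|V^i\|_{\tau,r}\le\frac\rho2$), the coefficient $2C_{r,p}(\rho+1)$ of the dissipative term can be made to dominate the nonlinear constant $C_r\rho$, the noise Lipschitz constant, and a small slack. This is also why uniqueness for the \emph{modified} system \eqref{PE-inviscid-system-modified} is out of reach here: without the stopping time, the cut-off multiplies but does not remove the top-order contributions of $Q(R,V^1)$ and of the cut-off difference $\big(\theta_\rho(\|V^1\|_{\tau,r})-\theta_\rho(\|V^2\|_{\tau,r})\big)Q(V^2,V^2)$, and the a priori control of $V^i$ only in $\mathcal D_{\tau,r+\frac12}$ is not enough to absorb them; cf.\ Remark~\ref{remark:nonlinear-trouble}.
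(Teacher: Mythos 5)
Your proposal is correct and follows essentially the same route as the paper: the same decomposition $R = V^1 - V^2$, cancellation of pressure and Coriolis, the trilinear analytic estimate of Lemma~\ref{lemma-inviscid} for the top-order nonlinear term (yielding the $C_r\rho\|A^{r+\frac12}e^{\tau A}R\|^2$ piece plus the cross term with $\|A^{r+\frac12}e^{\tau A}V^i\|$), absorption of all $\|A^{r+\frac12}e^{\tau A}R\|^2$ contributions into the $\dot\tau$ dissipation after enlarging the constant $C_{r,p}$ in \eqref{equation:tau}, and a stochastic Gronwall conclusion.

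The only noteworthy variation is in the Gronwall step: the paper localises with auxiliary stopping times $\eta^n$ that truncate $\int_0^t\sum_i\bigl(\|V^i\|_{\tau,r}^2 + \|A^{r+\frac12}e^{\tau A}V^i\|^2\bigr)\,ds$ so that \cite[Lemma 5.3]{glatt2009strong} applies, then lets $n\to\infty$, whereas your exponential-weight / nonnegative-local-martingale argument works directly from the a.s.\ integrability of the Gronwall coefficient $G$ over $[0,\eta\wedge T]$, which the definition of local martingale solution guarantees. Both are valid; yours is slightly more self-contained, the paper's is a black-box citation. One small imprecision in your writeup: the claim that $\langle Q(R,V^1),R\rangle$ is controlled by $C\|V^1\|_{\tau,r}\|R\|^2$ is not quite right, since the term $w(R)\,\partial_z V^1$ costs one derivative of $R$; the correct low-order bound, as in the paper's estimate of $A_1$, involves $\|\bar R\|_{H^2}^2$ rather than $\|\bar R\|^2$. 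This is harmless because the Gronwall closure is in $\|R\|_{\tau,r}^2$ with $r>\tfrac52$ anyway.
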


\begin{proof}
Let $R = V^1 - V^2$ and $\bar{R} = \mathds{1}_{\Omega_0} R$. Let $\eta^n$ be the stopping time defined by
\[
	\eta^n = \inf \left\lbrace t \geq 0 : \int_0^t \sum\limits_{i=1}^2\|V^i\|_{\tau,r} ^2 + \|A^{r+\frac12}e^{\tau A} V^i\|^2\, ds \geq n \right\rbrace,
\]
where the function $\tau(t)$ is defined in \eqref{equation:tau}.
Notice that both $\tau(t)$ and the existence time $T$ defined in \eqref{T} are independent of the initial data, thus both $V^1$ and $V^2$ are defined on the same interval $[0,T]$. Since $V_0 \in L^p\left( \Omega; \mathcal D_{\tau_0,r}\right)$ with $p \geq 4$, from the estimates in Lemma~\ref{lemma:estimate-inviscid} we deduce that $\lim\limits_{n\to \infty}\eta^n \geq T$ $\Pb$-a.s.\ and therefore it suffices to show that
\[
	\Eb \sup_{s \in \left[0, \eta \wedge \eta^n \wedge t \right]} \| \bar{R}(s) \|_{\tau,r}^2  = 0,
\]
for all $t \in (0,T]$ and $n \in \Nb$ such that $\eta^n \leq T$. Subtracting the equation of $V^2$ from the one of $V^1$ one has
\begin{equation*}
    \begin{split}
       &dR + \left[ Q(V^1,V^1) -  Q(V^2,V^2) + f_0  R^\perp + \nabla (P^1-P^2)\right] \, dt
        = \left[ \sigma(V^1) - \sigma(V^2) \right] \, dW, 
	   \\
	   &R(0) = V^1(0) - V^2(0).
    \end{split}
\end{equation*}
Fix $n \in \Nb$ and let $\eta_a$, $\eta_b$ be stopping times such that $0 \leq \eta_a \leq \eta_b \leq \eta\wedge\eta^n \wedge T$. We now calculate
\begin{align}
	\nonumber
	\Eb  &\sup_{s \in \left[\eta_a, \eta_b\right]} \| \bar{R} \|^2  \leq  \Eb \| \bar{R}(\eta_a) \|^2 \\
	\nonumber
	&\quad+ 2 \Eb \int_{\eta_a}^{\eta_b} \mathds{1}_{\Omega_0}\left| \Big\langle Q(V^1,V^1) - Q(V^2,V^2), \bar{R}\Big\rangle \, ds \right| + 2 \Eb \int_{\eta_a}^{\eta_b} \left| \Big\langle f_0 \bar R^\perp + \mathds{1}_{\Omega_0}\nabla (P^1-P^2), \bar{R}\Big\rangle \right| \, ds \\
	\nonumber
	&\quad+ 2 \Eb \sup_{s \in \left[\eta_a, \eta_b\right]} \left| \int_{\eta_a}^{s} \mathds{1}_{\Omega_0}\Big\langle  \sigma(V^1) - \sigma(V^2)  , \bar{R} \Big\rangle dW \right|+ \Eb \int_{\eta_a}^{\eta_b} \mathds{1}_{\Omega_0}\| \sigma(V^1) - \sigma(V^2) \|_{L_2\left(\Uc, \mathcal D_0\right)}^2 \, ds \\
	\label{eq:R.estimate}
	&= \Eb \| \bar{R}(\eta_a) \|^2 + A_1 + A_2 + A_3 + A_4 .
\end{align}
where the It\^{o}'s Lemma, integrating in time and taking supremums, multiplying by $\mathds{1}_{\Omega_0}$ and taking the expected value have been successively applied. 

For $A_1$, thanks to the the H\"older inequality and the Sobolev inequality:
\begin{align*}
	A_1 &\leq 2 \Eb \int_{\eta_a}^{\eta_b} \left| \Big\langle Q(\bar R, V^1) +  Q(V^2, \bar R) , \bar{R} \Big\rangle \right| \, ds
	\leq C \Eb\int_{\eta_a}^{\eta_b} \left(\|V^1\|_{0,2} + \|V^2\|_{0,2} \right)\|\bar R\|_{0,2}^2 ds.
\end{align*}
Since $\langle \bar R^\perp, \bar R\rangle = 0$ and $\langle\mathds{1}_{\Omega_0} \nabla (P^1-P^2), \bar R\rangle = 0$, one has $A_2= 0$.
Regarding $A_3$, the Burkholder-Davis-Gundy inequality \eqref{eq:bdg}, the Lipschitz continuity of in $\sigma$ \eqref{noise-inviscid}, and the Young inequality give
\begin{equation*}
	\begin{split}
		A_3 \leq C \Eb \left( \int_{\eta_a}^{\eta_b} \mathds{1}_{\Omega_0}\| \sigma(V^1) - \sigma(V^2) \|_{L_2\left(\Uc, \mathcal D_0\right)}^2 \| \bar{R} \|^2 \, ds \right)^{1/2}\leq \frac1{2} \Eb \sup_{s \in \left[\eta_a, \eta_b\right]} \| \bar{R} \|^2 + C \Eb \int_{\eta_a}^{\eta_b} \| \bar{R} \|_{0,\frac12}^2 ds.
	\end{split}
\end{equation*}
Finally, the integral $A_4$ is estimated using the Lipschitz continuity of $\sigma$ in \eqref{noise-inviscid}. We get
\begin{equation*}
	A_4 \leq  C \Eb \int_{\eta_a}^{\eta_b} \| \bar{R} \|_{0,\frac12}^2 \, ds.
\end{equation*}

For the higher order part, thanks to \eqref{bound:v-and-vtilde}, one is allowed to take inner product of $dA^{r}e^{\tau A} R$ with $A^{r}e^{\tau A} R$ and follow a similar derivation of \eqref{eq:R.estimate} to get
\begin{align*}
	\nonumber
	\Eb  \sup_{s \in \left[\eta_a, \eta_b\right]} &\| A^r e^{\tau A} \bar{R} \|^2  \leq  \Eb \|A^r e^{\tau(\eta_a) A} \bar{R}(\eta_a) \|^2 + 2 \Eb \int_{\eta_a}^{\eta_b} \dot\tau \| A^{r+\frac12} e^{\tau A} \bar{R} \|^2 ds  \\
	\nonumber
	&\quad+ 2 \Eb \int_{\eta_a}^{\eta_b} \mathds{1}_{\Omega_0}\left| \Big\langle A^r \Big(e^{\tau A} Q(V^1,V^1) - Q(V^2,V^2)\Big), A^r e^{\tau A}\bar{R}\Big\rangle \, ds \right| \\ \nonumber
	&\quad + 2 \Eb \int_{\eta_a}^{\eta_b} \left| \Big\langle f_0 A^r e^{\tau A} \bar R^\perp + \mathds{1}_{\Omega_0} A^r e^{\tau A} \nabla (P^1-P^2), A^r e^{\tau A} \bar{R}\Big\rangle \right| \, ds \\
	\nonumber
	&\quad+ 2 \Eb \sup_{s \in \left[\eta_a, \eta_b\right]} \left| \int_{\eta_a}^{s} \mathds{1}_{\Omega_0}\Big\langle  A^r e^{\tau A} \Big(\sigma(V^1) - \sigma(V^2) \Big) , A^r e^{\tau A}\bar{R} \Big\rangle dW \right| 
	\\ \nonumber
	&\quad+ \Eb \int_{\eta_a}^{\eta_b} \mathds{1}_{\Omega_0}\| \sigma(V^1) - \sigma(V^2) \|_{L_2\left(\Uc, \mathcal D_{\tau, r}\right)}^2 \, ds \\
	&= \Eb \| \bar{R}(\eta_a) \|^2 + 2 \Eb \int_{\eta_a}^{\eta_b} \dot\tau \| A^{r+\frac12} e^{\tau A} \bar{R} \|^2 ds + B_1 + B_2 + B_3 + B_4.
\end{align*}
Similar to $A_1-A_4$,
by applying Lemma~\ref{lemma-inviscid} and since $\eta_b \leq \eta$, one obtains that
\begin{equation}\label{nonlinear-trouble}
    \begin{split}
        B_1 &\leq 2 \Eb \int_{\eta_a}^{\eta_b} \left| \Big\langle A^r e^{\tau A} \Big(\ Q(\bar R, V^1) +  Q(V^2, \bar R) \Big) ,A^r e^{\tau A} \bar{R} \Big\rangle \right| \, ds
	    \\
	    &\leq C_r \Eb \int_{\eta_a}^{\eta_b} \rho \| A^{r+\frac12} e^{\tau A} \bar{R} \|^2 ds + C_r \Eb \int_{\eta_a}^{\eta_b}  \sum\limits_{i=1}^2 \| A^{r+\frac12} e^{\tau A} V^i \| \|\bar{R} \|_{\tau,r} \| A^{r+\frac12} e^{\tau A} \bar{R} \| ds
	    \\
	    &\leq C_r\Eb \int_{\eta_a}^{\eta_b} (\rho + 1) \| A^{r+\frac12} e^{\tau A} \bar{R} \|^2  ds + C_r\Eb \int_{\eta_a}^{\eta_b} \sum\limits_{i=1}^2\| A^{r+\frac12} e^{\tau A} V^i \|^2 \|\bar{R} \|_{\tau,r}^2  ds.
    \end{split}
\end{equation}
Since $\langle f_0 A^r e^{\tau A} \bar R^\perp , A^r e^{\tau A} \bar{R}\rangle = 0$ and $ \langle  \mathds{1}_{\Omega_0} A^r e^{\tau A} \nabla (P^1-P^2), A^r e^{\tau A} \bar{R}\rangle = 0$, one has $B_2= 0$.
Regarding $B_3$, the Burkholder-Davis-Gundy inequality \eqref{eq:bdg}, the Lipschitz continuity of $\sigma$ in  \eqref{noise-inviscid}, and the Young inequality give
\begin{equation*}
	\begin{split}
		B_3 \leq &C \Eb \left( \int_{\eta_a}^{\eta_b} \mathds{1}_{\Omega_0}\| \sigma(V^1) - \sigma(V^2) \|_{L_2\left(\Uc, \mathcal D_{\tau,r}\right)}^2\| A^{r} e^{\tau A} \bar{R} \|^2 \, ds \right)^{1/2}
		\\
		\leq &\frac1{2} \Eb \sup_{s \in \left[\eta_a, \eta_b\right]} \| A^{r} e^{\tau A} \bar{R} \|^2 + C \Eb \int_{\eta_a}^{\eta_b} \|\bar{R} \|_{\tau,r}^2 + \| A^{r+\frac12} e^{\tau A} \bar{R} \|^2 ds.
	\end{split}
\end{equation*}
Finally, the integral $B_4$ is estimated using the Lipschitz continuity of  $\sigma$ in \eqref{noise-inviscid}. Therefore,
\begin{equation*}
	B_4 \leq  C \Eb \int_{\eta_a}^{\eta_b} \|\bar{R} \|_{\tau,r}^2 +\| A^{r+\frac12} e^{\tau A} \bar{R} \|^2 ds.
\end{equation*}

Collecting the estimates of $A_1$ to $A_4$ and $B_1$ to $B_4$ gives
\begin{equation}\label{est:uniqueness}
    \begin{split}
        \Eb  \sup_{s \in \left[\eta_a, \eta_b\right]} \|  \bar{R}(s) \|_{\tau(s),r}^2 \leq & C\Eb \|  \bar{R} (\eta_a) \|_{\tau(\eta_a),r}^2 + C \Eb \int_{\eta_a}^{\eta_b} \left(\dot\tau + C_r\left(\rho+1 \right)\right) \| A^{r+\frac12} e^{\tau A} \bar{R} \|^2 \, ds 
        \\
        & + C_r\Eb \int_{\eta_a}^{\eta_b} \left(1 +  \sum\limits_{i=1}^2 \left(\|  V^i \|_{\tau,r}^2  +  \| A^{r+\frac12} e^{\tau A} V^i \|^2 \right) \right)  \|  \bar{R} \|_{\tau,r}^2 \, ds.
    \end{split}
\end{equation}
By taking the constant $C_{r,\rho}$ appearing in the definition of $\tau$ in \eqref{equation:tau} large enough so that $C_{r,\rho} \geq C_r$ where $C_r$ appearing in \eqref{est:uniqueness}, one obtains that $\dot\tau + C_r\left(\rho+1 \right) \leq 0$. With this and \eqref{est:uniqueness} one can apply the stochastic Gronwall Lemma (see \cite[Lemma 5.3]{glatt2009strong}) to conclude the proof.
\end{proof}

\begin{remark}\label{remark:nonlinear-trouble}
Unlike \cite{brzezniak2021well,debussche2011local,saal2021stochastic} where the authors were able to show the strong uniqueness of the martingale solutions to the modified system, we are only able to show it for the original system. The reason is that we need the help of stopping times to control the terms $\| A^{r} e^{\tau A} V^i \|$ in \eqref{nonlinear-trouble}. With stopping times, we are considering the solutions to the original system instead of the modified system. The essential reason behind this is that the horizontal viscosity can help control the nonlinear estimates in \cite{brzezniak2021well,debussche2011local,saal2021stochastic}, while the dissipation from analytic framework is not strong enough to provide the same control in our case.
\end{remark}

%%%%%%%%%%%%%%%%%%%%%%%%%
%%%%%%%%%%%%%%%%%%%%%%%%%

\section{The Vertically Viscous Case}\label{section:viscous}
In this section, we study system \eqref{PE-viscous-system-abstract}, {\it i.e.}, the stochastic PEs with vertical viscosity. For some fixed $\rho\geq M$ with $M$ appearing in the condition \eqref{condition:mu-zero-viscous}, and the cut-off function $\theta_\rho$ defined in \eqref{eqn:rho},  
we consider the following modified system
\begin{subequations}\label{PE-viscous-system-modified}
\begin{align}
    &d V + \left[\theta_\rho\left(\| V\|_{\tau,r,\gamma,r}  \right)Q( V, V) + F(V) - \nu_z \partial_{zz} V\right]dt = \sigma (V) dW , \label{PE-viscous-modified-1}  
    \\
    &V(0) = V_0, \label{PE-viscous-modified-ic}
\end{align}
\end{subequations}
where the functions $\tau(t)$ and $\gamma(t)$ in $\| V\|_{\tau,r,\gamma,r} $ will be determined later.
Following the same setup as in Section~\ref{section:galerkin}, for each $n\in \N$, we consider the following Galerkin approximation of system \eqref{PE-viscous-system-modified} at order $n$ as:
\begin{subequations}\label{PE-viscous-system-galerkin}
\begin{align}
    &d V_n + \left[\theta_\rho\left(\| V_n\|_{\tau,r,\gamma,r} \right) Q_n(V_n, V_n) + F_n(V_n) - \nu_z \partial_{zz} V_n \right]dt   = \sigma_n (V_n) dW , \label{PE-viscous-galerkin-1}  
    \\
    & V_n(0) = P_n V_0 := (V_n)_0. \label{PE-viscous-galerkin-ic}  
\end{align}
\end{subequations}
Define $U_n = e^{\tau A_h}e^{\gamma A_z}V_n$, and therefore, $V_n = e^{-\tau A_h}e^{-\gamma A_z}U_n$. Since $dU_n = (\dot{\tau}A_h U_n  + \dot\gamma A_z U_n)dt + e^{\tau A_h}e^{\gamma A_z}dV_n$, system \eqref{PE-viscous-system-galerkin} is equivalent to
\begin{subequations}\label{PE-viscous-system-galerkin-U}
\begin{align}
    &d U_n + \Big[\theta_\rho\left(\|U_n\|_{0,r}  \right) e^{\tau A_h} e^{\gamma A_z} Q_n(e^{-\tau A_h} e^{-\gamma A_z}U_n, e^{-\tau A_h} e^{-\gamma A_z}U_n) \nonumber
    \\
    &\qquad+ e^{\tau A_h} e^{\gamma A_z}F_n(e^{-\tau A_h} e^{-\gamma A_z}U_n) -\dot{\tau}A_h U_n  - \dot\gamma A_z U_n - \nu_z \partial_{zz} U_n\Big]dt   
    \\
    &= e^{\tau A_h} e^{\gamma A_z}\sigma_n (e^{-\tau A_h} e^{-\gamma A_z}U_n) dW , \label{PE-viscous-galerkin-U-1}  
    \\
    & U_n(0) = e^{\tau_0 A_h }V_n(0), \label{PE-viscous-galerkin-U-ic}  
\end{align}
\end{subequations}
and $U_n = e^{\tau A_h}e^{\gamma A_z}V_n$ is the solution to system \eqref{PE-viscous-system-galerkin-U}. Notice that initially the analytic radius in $z$ is $\gamma_0 =0$, thus $e^{\gamma_0 A_z} = 1$.

This section aims to provide results that are parallel to the inviscid case summarized in Theorem~\ref{theorem:viscous}. For this purpose, we fix $\rho>M$, $\tau_0>0$, $\gamma_0=0$, and $r>\frac52$ through this section. In Section~\ref{section:energy-estimate-viscous}, we establish the energy estimates of the Galerkin systems \eqref{PE-viscous-system-galerkin} and \eqref{PE-viscous-system-galerkin-U}. Section~\ref{section:existence-uniqueness-viscous} will focus on the local  existence martingale solutions and pathwise uniqueness to the original system \eqref{PE-viscous-system-abstract}, where we only highlight some necessary details as most of the proofs are similar to the inviscid case given in Sections~\ref{section:existence-inviscid} and \ref{section:uniqueness-inviscid}. This will complete the proof of Theorem~\ref{theorem:viscous}.

\subsection{Energy estimates} \label{section:energy-estimate-viscous}
We establish the main estimates needed to pass to the limit in the Galerkin system \eqref{PE-viscous-system-galerkin} (and equivalently, system \eqref{PE-viscous-system-galerkin-U}). Recall that when $\tau = \gamma =0$ and $r=s$, $\mathcal D_{0,r,0,s} = \mathcal D_{0,r}$.

\begin{lemma}\label{lemma:estimate-viscous}
Let $\tau_0>0, \gamma_0 = 0$, $p\geq 2$, and $r>\frac52$ be fixed. Suppose that $V_0 \in L^p(\Omega; \mathcal D_{\tau_0,r,0,r})$ be an $\Fc_0$-measurable random variable. Assume that $\sigma$ satisfies \eqref{noise-viscous} with $\delta$ satisfying \eqref{assumption:delta}, and let $f$ satisfies \eqref{force-viscous}.
Let $V_n$ be the solutions of system \eqref{PE-viscous-system-galerkin}. Then there exist functions $\tau(t)$ and $\gamma(t)$ defined in \eqref{tau-gamma}, and a constant $T$ defined in \eqref{T-viscous} such that the following hold:

 \begin{multline*}
\text{(i)} \quad \mathbb E \Big[\sup\limits_{t\in[0,T]} \|V_n\|_{\tau,r,\gamma,r}^p +  \int_0^T \nu_z \|A^r e^{\tau A_h}  e^{\gamma A_z}V_n\|^{p-2} \|\partial_z V_n\|_{\tau,r,\gamma,r}^2 ds
    \\
    + \int_0^T \|A^r e^{\tau A_h}  e^{\gamma A_z}V_n\|^{p-2} \|A_h^{\frac12} A^r e^{\tau A_h}  e^{\gamma A_z} V_n\|^2 ds \Big]  \leq C_p(1+ \mathbb E \|V_0\|_{\tau_0,r,0,r}^p) e^{C_p T}.
\end{multline*}
	
		(ii) Let  $\alpha\in [0,1/2)$. Then $\int_0^{\cdot} e^{\tau A_h}  e^{\gamma A_z}\sigma_n(V_n) dW$ is bounded in
	$
		L^p\left( \Omega; W^{\alpha, p}(0, T; \mathcal D_{0,r-1}) \right);	
	$

	(iii) If moreover $p\geq 4$, $e^{\tau A_h}  e^{\gamma A_z}V_n - \int_0^\cdot e^{\tau A_h}  e^{\gamma A_z}\sigma_n(V_n) dW$ is bounded in 
	$
		L^2\left( \Omega; W^{1, 2}(0, T; \mathcal D_{0,r-1}))\right) .
	$
\end{lemma}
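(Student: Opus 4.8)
The plan is to mirror the structure of Lemma~\ref{lemma:estimate-inviscid}, carrying along the extra vertical dissipation term $-\nu_z\partial_{zz}V_n$ and the extra analytic radius $\gamma(t)$ in the $z$-direction. First I would apply the finite-dimensional It\^o formula twice to \eqref{PE-viscous-galerkin-1}: once to $\|V_n\|^p$ and once to $\|A^r e^{\tau A_h}e^{\gamma A_z}V_n\|^p$. In the first computation the nonlinear bracket vanishes by integration by parts (periodicity and the divergence-free structure of $Q$), the viscous term contributes the non-negative dissipation $-p\nu_z\|V_n\|^{p-2}\|\partial_z V_n\|^2$ with the correct sign, and the force and noise terms are handled exactly as in the inviscid case using \eqref{force-viscous} (with its $z$-analytic radius $\gamma^*$) and the first growth bound in \eqref{noise-viscous}. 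The second, higher-order computation is the heart of the argument: differentiating the weighted norm produces the commutator-type gain terms $p\dot\tau\|A^re^{\tau A_h}e^{\gamma A_z}V_n\|^{p-2}\|A_h^{1/2}A^re^{\tau A_h}e^{\gamma A_z}V_n\|^2$ and $p\dot\gamma\|A^re^{\tau A_h}e^{\gamma A_z}V_n\|^{p-2}\|A_z^{1/2}A^re^{\tau A_h}e^{\gamma A_z}V_n\|^2$ from the time-dependence of $\tau,\gamma$, together with the genuine viscous dissipation $-p\nu_z\|A^re^{\tau A_h}e^{\gamma A_z}V_n\|^{p-2}\|\partial_z(\cdot)\|^2$. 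The key point is that these three non-negative quantities must dominate (a) the nonlinear term $B_1$, controlled via the vertically-viscous nonlinear estimate (the analogue of Lemma~\ref{lemma-inviscid}; I would invoke the appendix lemma for the viscous case) and the cut-off $\theta_\rho$, which, as in \eqref{nonlinear-trouble}, costs one half-derivative in the horizontal plus the full $\nu_z\|\partial_z\cdot\|^2$ budget up to the small constant; and (b) the loss term $\delta^2\|V_n\|^2_{\tau,r+\frac12,\gamma,r+1}$ coming from the second part of \eqref{noise-viscous}, which is exactly why $\delta$ must be small relative to $\nu_z$ (assumption \eqref{assumption:delta}).

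Once all $A_i$ and $B_i$ terms are collected, I would choose $\tau(t)$ and $\gamma(t)$ as in \eqref{tau-gamma} — $\tau$ decreasing linearly at a rate $C_{r,p}(\rho+1)$ large enough to absorb the horizontal half-derivative loss, and $\gamma$ increasing at a rate tied to $\nu_z$ (so that $\dot\gamma>0$ adds, rather than subtracts, dissipation, consistent with the growth of the vertical analytic radius described after \eqref{force-viscous}), with the constraint $\gamma^*\geq \nu_z\tau_0/8$ guaranteeing the force term stays in the right space on $[0,T]$. Defining $T$ as in \eqref{T-viscous} so that $\tau(t)>0$ on $[0,T]$ and the coefficient of the half-derivative term is $\leq 0$, the inequality reduces to $\mathbb E[\sup_{[0,t]}\|V_n\|_{\tau,r,\gamma,r}^p + \text{dissipation integrals}] \leq C_p\mathbb E[1+\|V_0\|_{\tau_0,r,0,r}^p + \int_0^t \|V_n\|_{\tau,r,\gamma,r}^p\,ds]$, and the stochastic Gronwall lemma \cite[Lemma 5.3]{glatt2009strong} yields (i). For (ii), I would apply the fractional Burkholder–Davis–Gundy inequality \eqref{eq:bdg.frac} together with the first bound in \eqref{noise-viscous} and part (i); note that $\|e^{\tau A_h}e^{\gamma A_z}\sigma_n(V_n)\|_{L_2(\Uc,\mathcal D_{0,r-1})}$ is controlled by $\|\sigma(V_n)\|_{L_2(\Uc,\mathcal D_{\tau,r+\frac12,\gamma,r+1})}$ up to the Poincar\'e-type loss, which in turn is bounded by $1+\|V_n\|^2_{\tau,r+\frac12,\gamma,r}+\delta^2\|V_n\|^2_{\tau,r+\frac12,\gamma,r+1}$, all integrable over $[0,T]\times\Omega$ by (i). For (iii), I would rewrite the equation as $e^{\tau A_h}e^{\gamma A_z}V_n - \int_0^\cdot e^{\tau A_h}e^{\gamma A_z}\sigma_n\,dW = e^{\tau_0 A_h}V_n(0) - \int_0^\cdot[\theta_\rho Q_n + F_n + \dot\tau A_h(\cdot) + \dot\gamma A_z(\cdot) + \nu_z A_z^2(\cdot)]\,ds$, bound the time derivative in $\mathcal D_{0,r-1}$ using the viscous Banach-algebra estimate (analogue of Lemma~\ref{lemma-banach-algebra}), the cut-off, \eqref{force-viscous}, and $\sup_{[0,T]}\|V_n\|^4_{\tau,r,\gamma,r}$ from (i) — which is where $p\geq 4$ enters — with the pressure gradient again orthogonal to $\mathcal D_{0,r-1}$.

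The main obstacle I anticipate is the bookkeeping in the higher-order It\^o computation: one must verify that the horizontal half-derivative budget ($\dot\tau$ term), the vertical full-derivative budget ($\nu_z$ term), and the vertical half-derivative bonus ($\dot\gamma$ term) are \emph{separately} sufficient to absorb, respectively, the horizontal part of the cut-off nonlinear estimate, the vertical part of it, and the $\delta^2\|\cdot\|^2_{\ldots,r+1}$ noise loss — and that the constants can be chosen in a compatible order ($\delta$ small depending on $\nu_z$, then $C_{r,p}(\rho+1)$ large, then $T$ small). The precise form of the viscous nonlinear estimate from the appendix — in particular, how the $z$-derivative loss from $w\partial_z V$ is split between $\nu_z\|\partial_z\cdot\|^2$ and the $\gamma$-gain — is the delicate ingredient; everything else is a direct, if tedious, transcription of the inviscid argument with the extra weight $e^{\gamma A_z}$ inserted throughout.
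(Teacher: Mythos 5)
Your overall structure — two It\^o computations (for $\|V_n\|^p$ and $\|A^r e^{\tau A_h}e^{\gamma A_z}V_n\|^p$), choosing $\tau,\gamma,T$ as in \eqref{tau-gamma}, \eqref{T-viscous}, stochastic Gronwall for (i), fractional BDG for (ii), and estimating the time derivative in $\mathcal D_{0,r-1}$ for (iii) with $p\geq 4$ needed for the quartic moment — is exactly the paper's route. However, you have the role of the $\dot\gamma$-term backwards, and this leads to an incorrect bookkeeping of the budgets. Because $\dot\gamma=\tfrac18\nu_z>0$, the term $p\dot\gamma\|A^re^{\tau A_h}e^{\gamma A_z}V_n\|^{p-2}\|A_z^{1/2}A^re^{\tau A_h}e^{\gamma A_z}V_n\|^2$ is a \emph{positive cost} in the It\^o balance, not a third source of dissipation: the growing vertical analytic radius must itself be paid for. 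The only genuine sources of dissipation are $p\dot\tau\|A_h^{1/2}\cdot\|^2$ (since $\dot\tau<0$) and $p\nu_z\|A_z\cdot\|^2$. Your sentence asserting that the three budgets are ``separately sufficient'' — $\dot\tau$ for the horizontal nonlinear piece, $\nu_z$ for the vertical nonlinear piece, and ``the $\dot\gamma$ bonus'' for the $\delta^2$ loss — is therefore wrong on the last count, and could not work even with $\dot\gamma<0$: the $\dot\gamma$ weight controls only a half $z$-derivative, while the $\delta^2$ term in \eqref{noise-viscous} costs a full $z$-derivative ($\|V_n\|^2_{\tau,r+\frac12,\gamma,r+1}$).

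The correct allocation, as in the paper's \eqref{est:1-viscous}, is: the $\dot\tau$ budget absorbs the horizontal half-derivative part of the nonlinear estimate (Lemma~\ref{lemma-viscous-type1}, which produces $C_{p,r,\nu_z}(1+\rho^2)\|A_h^{1/2}\cdot\|^2$ after Young — note also the rate is $\rho^2+1$, not $\rho+1$, because of this Young step), while the single $\nu_z$-budget $p\nu_z\|A_z\cdot\|^2$ must absorb \emph{three} things simultaneously: the $\frac{p\nu_z}{8}\|\partial_z V_n\|^2_{\tau,r,\gamma,r}$ piece from the nonlinear estimate, the $C_p\delta^2\|\partial_z V_n\|^2_{\tau,r,\gamma,r}$ piece from the noise, and the $p\dot\gamma\|A_z^{1/2}\cdot\|^2\leq\frac{p\nu_z}{8}\|\partial_z V_n\|^2_{\tau,r,\gamma,r}$ cost from the growing $\gamma$. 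Adding these fractions of $p\nu_z$ and requiring the total to stay below $p\nu_z$ is precisely what forces the smallness constraint \eqref{assumption:delta} and the rate $\dot\gamma=\nu_z/8$. Without this reallocation your accounting does not close; with it, the rest of your argument goes through as written.
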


\begin{remark}\label{remark:radius-analyticity}
Different from the inviscid case where the radius of analyticity is shrinking on all variables, from \eqref{tau-gamma} below, one will see that the radius of analyticity in the $z$ variable starts from $0$ and is increasing thanks to the vertically viscousity.
\end{remark}

\begin{proof}
For $\|V_n\|$, by applying the finite dimensional It\^{o} formula to \eqref{PE-viscous-galerkin-1}, for $p\geq 2$, one has
\begin{equation*}
    \begin{split}
        &d\| V_n\|^p + p\nu_z\|A_z  V_n\|^2 \| V_n\|^{p-2}  
        \\
        = & - p \theta_\rho\left(\| V_n\|_{\tau,r,\gamma,r}  \right) \|V_n\|^{p-2} \big\langle    Q_n(V_n, V_n) ,  V_n\big\rangle dt
        - p \|V_n\|^{p-2} \big\langle    F_n (V_n), V_n\big\rangle dt
        \\
        &+\frac p2 \| \sigma_n(V_n)\|^2_{L_2(\Uc, \mathcal D_0)} \|V_n\|^{p-2} dt
        + \frac{p(p-2)}2 \big\langle  \sigma_n(V_n),  V_n \big\rangle^2 \|V_n\|^{p-4} dt
        \\
        &+ p \big\langle  \sigma_n(V_n),  V_n \big\rangle \| V_n\|^{p-2} dW 
        \\
       := & A_1 dt +  A_2 dt +  A_3 dt +  A_4 dt +  A_5 dW.
    \end{split}
\end{equation*}
For $\|A^r e^{\tau A_h}  e^{\gamma A_z}V_n\|$, applying the finite dimensional It\^{o} formula gives, for $p\geq 2$,
\begin{equation*}
    \begin{split}
        &d\|A^r e^{\tau A_h}  e^{\gamma A_z}V_n\|^p + p\nu_z \|A^r e^{\tau A_h}  e^{\gamma A_z}V_n\|^{p-2} \|A^r A_z e^{\tau A_h}  e^{\gamma A_z}V_n\|^2
        \\
        & - p\big( \dot{\tau} \|A^r e^{\tau A_h}  e^{\gamma A_z}V_n\|^{p-2} \|A_h^{\frac12} A^r  e^{\tau A_h}  e^{\gamma A_z}V_n\|^2 
        \\
        &\hspace{3cm}+ \dot{\gamma} \|A^r e^{\tau A_h}  e^{\gamma A_z}V_n\|^{p-2} \| A_z^{\frac12} A^{r}  e^{\tau A_h}  e^{\gamma A_z} V_n\|^2 \big) dt
        \\
        = & - p \theta_\rho\left(\| V_n\|_{\tau,r,\gamma,r}  \right) \|A^r e^{\tau A_h}  e^{\gamma A_z}V_n\|^{p-2} \big\langle A^r e^{\tau A_h}  e^{\gamma A_z}    Q_n(V_n, V_n) , A^r e^{\tau A_h}  e^{\gamma A_z} V_n\big\rangle dt
        \\
        & - p \|A^r e^{\tau A_h}  e^{\gamma A_z}V_n\|^{p-2} \big\langle A^r e^{\tau A_h}  e^{\gamma A_z}   F_n (V_n), A^r e^{\tau A_h}  e^{\gamma A_z}V_n\big\rangle dt
        \\
        &+\frac p2 \| A^r e^{\tau A_h}  e^{\gamma A_z}\sigma_n(V_n)\|^2_{L_2(\Uc, \mathcal D_0)} \|A^r e^{\tau A_h}  e^{\gamma A_z}V_n\|^{p-2} dt
        \\
        &+ \frac{p(p-2)}2 \big\langle A^r e^{\tau A_h}  e^{\gamma A_z} \sigma_n(V_n), A^r e^{\tau A_h}  e^{\gamma A_z} V_n \big\rangle^2 \|A^r e^{\tau A_h}  e^{\gamma A_z}V_n\|^{p-4} dt
        \\
        &+ p \big\langle A^r e^{\tau A_h}  e^{\gamma A_z} \sigma_n(V_n), A^r e^{\tau A_h}  e^{\gamma A_z} V_n \big\rangle \|A^r e^{\tau A_h}  e^{\gamma A_z}V_n\|^{p-2} dW 
        \\
       := &  B_1 dt +  B_2 dt +  B_3 dt +  B_4 dt +  B_5 dW.
    \end{split}
\end{equation*}
By integration by parts and due to the boundary condition and incompressible condition, $A_1 = 0$. For the nonlinear term $B_1$,  Lemma~\ref{lemma-viscous-type1}, Young's inequality and the property of cut-off function $\theta_\rho$ together give 
\begin{equation*}
    \begin{split}
        |B_1| \leq& C_{p,r} \theta_\rho\left(\| V_n\|_{\tau,r,\gamma,r} \right) \|A^r e^{\tau A_h}  e^{\gamma A_z}V_n\|^{p-2} 
        \\
        & \times \left(\|V_n\|_{\tau,r,\gamma,r} \|A_h^{\frac12} A^r e^{\tau A_h}  e^{\gamma A_z} V\|^2  + \|V_n\|_{\tau,r,\gamma,r} \|\partial_z V_n\|_{\tau,r,\gamma,r} \|A_h^{\frac12} A^r e^{\tau A_h}  e^{\gamma A_z} V\|\right)
        \\
        \leq & C_{p,r,\nu_z} (1+\rho^2) \|A^r e^{\tau A_h}  e^{\gamma A_z}V_n\|^{p-2} \|A_h^{\frac12} A^r e^{\tau A_h}  e^{\gamma A_z} V\|^2 + \frac{p\nu_z}8 \|A^r e^{\tau A_h}  e^{\gamma A_z}V_n\|^{p-2} \|\partial_z V_n\|_{\tau,r,\gamma,r}^2 .
    \end{split}
\end{equation*}
For $A_2$ and $B_2$, by integration by parts, thanks to the periodic boundary condition, the Cauchy-Schwartz inequality, and Young's inequality, and noticing that $\gamma(T) = \frac{\nu_z}8 T \leq \frac{\nu_z}8 \tau_0 \leq \gamma^*$ from \eqref{force-viscous}, \eqref{tau-gamma}, and \eqref{T-viscous},
one obtains that
\begin{equation*}
    \begin{split}
       \int_0^t |A_2| + |B_2| ds &\leq C_p\int_0^t  \|V_n\|_{\tau,r,\gamma,r}^{p-2} \Big( \Big|\big\langle A^r e^{\tau A_h}  e^{\gamma A_z}   f, A^r e^{\tau A_h}  e^{\gamma A_z}V_n\big\rangle \Big| + \Big|\big\langle  f,  V_n\big\rangle \Big| \Big)ds
       \\
       &\leq \int_0^t C_p \|V_n\|_{\tau,r,\gamma,r}^{p-2} \left(\|f\|_{\tau_0,r,\gamma(T),r}^2 + \|V_n\|_{\tau,r,\gamma,r}^2 \right) ds 
       \\
       &\leq \frac14 \left(\sup\limits_{s\in[0,t]} \|V_n\|^p + \sup\limits_{s\in[0,t]}\|A^r e^{\tau A_h}  e^{\gamma A_z}  V_n\|^p  \right) + C_p\left(1 + \int_0^t \|V_n\|_{\tau,r,\gamma,r}^p ds\right).
    \end{split}
\end{equation*}
From the assumptions on $\sigma$ in \eqref{noise-viscous}, we know that
\begin{equation*}
\begin{split}
    &\|\sigma(V_n)\|^2_{L_2(\Uc, \mathcal D_0)} + \|A^r e^{\tau A_h}  e^{\gamma A_z} \sigma(V_n)\|^2_{L_2(\Uc, \mathcal D_0)} 
    \\
    \leq &C (1+ \|V_n\|_{\tau,r,\gamma,r}^2 + \|A_h^{\frac12} A^r e^{\tau A_h}  e^{\gamma A_z} V_n\|^2 ) + \delta^2 \|\partial_z V_n\|_{\tau,r,\gamma,r}^2.
\end{split}
\end{equation*}
Therefore, by the Cauchy-Schwarz inequality, 
\begin{equation*}
\begin{split}
    &|A_3|+| A_4| + |B_3|+|B_4| 
    \\
    \leq &C_p  (1+ \|V_n\|_{\tau,r,\gamma,r}^2 + \|A_h^{\frac12} A^r e^{\tau A_h}  e^{\gamma A_z} V_n\|^2  + \delta^2 \|\partial_z V_n\|_{\tau,r,\gamma,r}^2 )\|A^r e^{\tau A_h}  e^{\gamma A_z}V_n\|^{p-2}
    \\
    \leq &C_p(1+ \|V_n\|_{\tau,r,\gamma,r}^p) + C_p \|A^r e^{\tau A_h}  e^{\gamma A_z}V_n\|^{p-2} \left(\|A_h^{\frac12} A^r e^{\tau A_h}  e^{\gamma A_z} V_n\|^2 + \delta^2 \|\partial_zV_n\|_{\tau,r,\gamma,r}^2\right) .
\end{split}
\end{equation*}
Finally, thanks to the Burkholder-Davis-Gundy inequality, from the Cauchy-Schwartz inequality, Young's inequality, and the property of in $\sigma$ \eqref{noise-viscous}, we deduce that
\begin{equation*}
\begin{split}
    &\mathbb E \sup\limits_{s\in[0,t]} \Big|\int_0^s A_5 + B_5 dW \Big| 
    \\
    \leq &C_p \mathbb E\left(\int_0^t \|A^r e^{\tau A_h}  e^{\gamma A_z}V_n\|^{2(p-1)}  \left(1+ \|V_n\|_{\tau,r,\gamma,r}^2 + \|A_h^{\frac12} A^r e^{\tau A_h}  e^{\gamma A_z} V_n\|^2  + \delta^2 \|\partial_z V_n\|_{\tau,r,\gamma,r}^2 \right) \right)^{\frac12}
    \\
    \leq &\frac14 \left(\mathbb E\sup\limits_{s\in[0,t]} \|V_n\|^p + \mathbb E \sup\limits_{s\in[0,t]}\|A^r e^{\tau A_h}  e^{\gamma A_z}  V_n\|^p  \right)
    \\
    &+ C_p\mathbb E \int_0^t \left(1+ \|V_n\|_{\tau,r,\gamma,r}^p + \|A^r e^{\tau A_h}  e^{\gamma A_z}V_n\|^{p-2} \left(\|A_h^{\frac12} A^r e^{\tau A_h}  e^{\gamma A_z} V_n\|^2 +
    \delta^2  \|\partial_zV_n\|_{\tau,r,\gamma,r}^2\right) \right) ds .
\end{split}
\end{equation*}
Notice that since $|k_3|^a\leq |k_3|^b$ for $0\leq a \leq b$ and $k_3\in \Z$, one has
\begin{equation*}
\begin{split}
    \|A^r e^{\tau A_h}  e^{\gamma A_z}V_n\|^{p-2} \|\partial_z V_n\|_{\tau,r,\gamma,r}^2
    &\leq 2 \|A_z A^r  e^{\tau A_h}  e^{\gamma A_z} V_n\|^2 \|A^r e^{\tau A_h}  e^{\gamma A_z} V_n\|^{p-2}.
\end{split}
\end{equation*}
Combining all estimates above produces
\begin{equation}\label{est:1-viscous}
    \begin{split}
        & \mathbb E \sup\limits_{s\in[0,t]} \|V_n\|^p + \mathbb E \sup\limits_{s\in[0,t]} \|A^r e^{\tau A_h}  e^{\gamma A_z} V_n\|^p + \mathbb E \int_0^t \frac p2\nu_z \|A^r e^{\tau A_h}  e^{\gamma A_z}V_n\|^{p-2} \|\partial_z V_n\|_{\tau,r,\gamma,r}^2 ds
        \\
        &+ \mathbb E \int_0^t \|A^r e^{\tau A_h}  e^{\gamma A_z}V_n\|^{p-2} \|A_h^{\frac12} A^r e^{\tau A_h}  e^{\gamma A_z} V_n\|^2 ds
        \\
        \leq & \mathbb E \sup\limits_{s\in[0,t]} \|V_n\|^p + \mathbb E \sup\limits_{s\in[0,t]} \|A^r e^{\tau A_h}  e^{\gamma A_z} V_n\|^p + \mathbb E \int_0^t p\nu_z \|A^r e^{\tau A_h}  e^{\gamma A_z}V_n\|^{p-2}  \| A_z A^{r}  e^{\tau A_h}  e^{\gamma A_z} V_n\|^2 ds
        \\
        &+ \mathbb E \int_0^t \|A^r e^{\tau A_h}  e^{\gamma A_z}V_n\|^{p-2} \|A_h^{\frac12} A^r e^{\tau A_h}  e^{\gamma A_z} V_n\|^2 ds
        \\
        \leq & p \mathbb E \int_0^t \dot{\gamma} \|A^r e^{\tau A_h}  e^{\gamma A_z}V_n\|^{p-2} \| A_z^{\frac12} A^{r}  e^{\tau A_h}  e^{\gamma A_z} V_n\|^2 ds 
        \\
        &+ p \big(\dot{\tau} + C_{p,r,\nu_z} (\rho^2+1)\big) \mathbb E \int_0^t \|A^r e^{\tau A_h}  e^{\gamma A_z}V_n\|^{p-2} \|A_h^{\frac12} A^r e^{\tau A_h}  e^{\gamma A_z} V\|^2 ds 
        \\
        & + \mathbb E \int_0^t  \left(\frac p8\nu_z + C_p \delta^2 \right)\|A^r e^{\tau A_h}  e^{\gamma A_z}V_n\|^{p-2} \|\partial_z V_n\|_{\tau,r,\gamma,r}^2 ds
        \\
        & + \frac12 \left(\mathbb E\sup\limits_{s\in[0,t]} \|V_n\|^p + \mathbb E \sup\limits_{s\in[0,t]}\|A^r e^{\tau A_h}  e^{\gamma A_z}  V_n\|^p  \right)
        + C_p \mathbb E \int_0^t \left(1+ \|V_n\|_{\tau,r,\gamma,r}^p\right) ds.
    \end{split}
\end{equation}
Under the assumption that
\begin{equation}\label{assumption:delta}
    \delta^2 \leq \frac{ p\nu_z}{8 C_p},
\end{equation} 
by taking 
\begin{equation}\label{tau-gamma}
    \tau(t) = - C_{p,r,\nu_z} (\rho^2+1) t, \text{ and } \gamma(t)=\frac18\nu_z t,
\end{equation}
 one can rewrite \eqref{est:1-viscous} as
\begin{equation}\label{est:2-viscous}
    \begin{split}
        & \frac12 \left(\mathbb E\sup\limits_{s\in[0,t]} \|V_n\|^p + \mathbb E \sup\limits_{s\in[0,t]}\|A^r e^{\tau A_h}  e^{\gamma A_z}  V_n\|^p  \right) + \mathbb E \int_0^t \frac p8 \nu_z\|A^r e^{\tau A_h}  e^{\gamma A_z}V_n\|^{p-2} \|\partial_z V_n\|_{\tau,r,\gamma,r}^2 ds
        \\
        &+ \mathbb E \int_0^t \|A^r e^{\tau A_h}  e^{\gamma A_z}V_n\|^{p-2} \|A_h^{\frac12} A^r e^{\tau A_h}  e^{\gamma A_z} V_n\|^2 ds
        \\
        \leq  &C_p \left(1+\mathbb E \int_0^t \left(1+ \|V_n\|_{\tau,r,\gamma,r}^p\right) ds \right).
    \end{split}
\end{equation}
Now define the time $T$ by
\begin{equation}\label{T-viscous}
    T = \frac{\tau_0}{2C_{p,r,\nu_z}(\rho^2 + 1)},
\end{equation}
then one observes that $\tau(T) = \frac{\tau_0}2$ and $\tau(t)>0$ for $t\in[0,T]$. 
Based on \eqref{est:2-viscous}, we have for $t\in[0,T]$,
\begin{equation}\label{est:3-viscous}
     \begin{split}
        & \mathbb E\sup\limits_{s\in[0,t]} \|V_n\|_{\tau,r,\gamma,r}^p  + \mathbb E \int_0^t \nu_z \|A^r e^{\tau A_h}  e^{\gamma A_z}V_n\|^{p-2} \|\partial_z V_n\|_{\tau,r,\gamma,r}^2 ds
        \\
        &+ \mathbb E \int_0^t \|A^r e^{\tau A_h}  e^{\gamma A_z}V_n\|^{p-2} \|A_h^{\frac12} A^r e^{\tau A_h}  e^{\gamma A_z} V_n\|^2 ds
        \\
        \leq & C_p \left(1+\mathbb E \int_0^t \left(1+ \|V_n\|_{\tau,r,\gamma,r}^p\right) ds \right).
    \end{split}
\end{equation}
Thanks to the Gronwall inequality \cite[Lemma 5.3]{glatt2009strong}, for $p\geq 2$ and $t=T$, one has
\begin{equation}\label{est:viscous}
\begin{split}
     \mathbb E \Big[\sup\limits_{t\in[0,T]} &\|V_n\|_{\tau,r,\gamma,r}^p +  \int_0^T \nu_z \|A^r e^{\tau A_h}  e^{\gamma A_z}V_n\|^{p-2} \|\partial_z V_n\|_{\tau,r,\gamma,r}^2 ds
    \\
    &+ \int_0^T \|A^r e^{\tau A_h}  e^{\gamma A_z}V_n\|^{p-2} \|A_h^{\frac12} A^r e^{\tau A_h}  e^{\gamma A_z} V_n\|^2 ds \Big]  C_p\leq (1+ \mathbb E \|V_0\|_{\tau_0,r,0,r}^p) e^{C_p T}.
\end{split}
\end{equation}

Regarding part (ii), we apply the fractional Burkholder-Davis-Gundy inequality \eqref{eq:bdg.frac}, \eqref{noise-viscous}, and the estimate \eqref{est:viscous} to obtain 
\begin{equation*}
    \begin{split}
        &\Eb \left| \int_0^\cdot e^{\tau A_h }e^{\gamma A_z}\sigma_n\left(V_n\right) \, dW \right|^p_{W^{\alpha, p}\left(0, T; \mathcal D_{0,r-1}\right)} 
        \\
        \leq &C_p \Eb \int_0^T \| e^{\tau A_h }e^{\gamma A_z}\sigma_n\left(V_n\right) \|^p_{L_2\left(\Uc, \mathcal D_{0,r-1}\right)} \, ds 
        \leq  C_{p,\delta} \Eb \left[  \int_0^T 1 + \|V_n\|_{\tau,r,\gamma,r}^p  \, ds \right] < \infty.
    \end{split}
\end{equation*}

For part (iii), from \eqref{PE-viscous-galerkin-1} (or \eqref{PE-viscous-galerkin-U-1}), for $t\in[0,T]$, one has
\begin{equation*}
    \begin{split}
        &e^{\tau A_h }e^{\gamma A_z}V_n(t) - \int_0^t e^{\tau A_h }e^{\gamma A_z}\sigma_n\left(V_n\right) \, dW 
        \\
        =& e^{\tau_0 A_h } V_n(0) - \int_0^t \Big[\theta_\rho(\|V_n\|_{\tau,r,\gamma,r} ) e^{\tau A_h }e^{\gamma A_z} Q_n(V_n, V_n) + e^{\tau A_h }e^{\gamma A_z} F_n\left(V_n\right) - \nu_z e^{\tau A_h }e^{\gamma A_z} \partial_{zz} V_n
        \\
        &\qquad \qquad \qquad \qquad - \dot{\tau}A_h e^{\tau A_h} e^{\gamma A_z} V_n -  \dot{\gamma}A_z e^{\tau A_h} e^{\gamma A_z} V_n \Big]\, ds.
    \end{split}
\end{equation*}
Then, with Lemma~\ref{lemma-banach-algebra-viscous} and $r>\frac52$, the Minkowski inequality, and Young's inequality, and the properties of $\sigma$, $F$ and the cut-off function $\theta_\rho$, we have
\begin{equation}\label{estimate:w12-viscous}
\begin{split}
    & \Eb \left\| e^{\tau A_h }e^{\gamma A_z}V_n - \int_0^\cdot e^{\tau A_h }e^{\gamma A_z} \sigma_n\left(V_n\right) \, dW \right\|^2_{W^{1, 2}\left(0, T; \mathcal D_{0,r-1}\right)} 
    \\
    \leq &C_{T,f_0,p,\rho,r,\nu_z} \Eb \left[ 1+ \| V_0\|_{\tau_0,r,0,r}^2 + \sup\limits_{t\in[0,T]}\|V_n\|_{\tau,r,\gamma,r}^4+ \int_0^T   \|f\|_{\tau_0,r,\gamma(T),r}^2 + \|\partial_z V_n\|_{\tau,r,\gamma,r}^2 \, ds \right]
    < \infty,
\end{split}
\end{equation}
where the pressure gradient disappears since $\nabla P$ is orthogonal to the space $\mathcal D_{0,r-1}$. Again, we require a higher moment $p \geq 4$ here due to applying item (i) to $\|V_n\|_{\tau,r,\gamma,r}^4$ in \eqref{estimate:w12-viscous}.
\end{proof}

\begin{corollary}\label{cor:u-viscous}
With the same assumptions as in Lemma~\ref{lemma:estimate-inviscid}. Let $\tau(t)$ and $\gamma(t)$ be defined in \eqref{tau-gamma}, and let $T$ be defined in \eqref{T-viscous}.
Suppose that $U_n = e^{\tau A_h }e^{\gamma A_z}V_n$ be the solutions of system \eqref{PE-viscous-system-galerkin-U}, and we denote by $U_0 = e^{\tau_0 A_h } V_0$. 
Then
	\begin{multline*}
\text{(i)} \quad     \mathbb E \Big[\sup\limits_{t\in[0,T]} \|U_n\|_{0,r}^p +  \int_0^T \nu_z \|A^r U_n\|^{p-2} \|\partial_z U_n\|_{0,r}^2 ds
    \\
    + \int_0^T \|A^r U_n\|^{p-2} \|A_h^{\frac12} A^r U_n\|^2 ds \Big]  \leq C_p (1+ \mathbb E \|U_0\|_{0,r}^p) e^{C_p T}.
\end{multline*}

		(ii) For  $\alpha\in [0,1/2)$, $\int_0^{\cdot} e^{\tau A_h }e^{\gamma A_z}\sigma_n(e^{-\tau A_h }e^{-\gamma A_z}U_n) dW$ is bounded in
	$
		L^p\left( \Omega; W^{\alpha, p}(0, T; \mathcal D_{0,r-1}) \right);	
	$

	(iii) If $p\geq 4$, $U_n - \int_0^\cdot e^{\tau A_h }e^{\gamma A_z}\sigma_n(e^{-\tau A_h }e^{-\gamma A_z}U_n) dW$ is bounded in
	$
		L^2\left( \Omega; W^{1, 2}(0, T; \mathcal D_{0,r-1}))\right) .
	$
\end{corollary}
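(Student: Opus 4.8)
The plan is to deduce the corollary directly from Lemma~\ref{lemma:estimate-viscous}, transcribing the estimates there through the change of variables $U_n = e^{\tau A_h}e^{\gamma A_z}V_n$, in exactly the way Corollary~\ref{cor:u} follows from Lemma~\ref{lemma:estimate-inviscid}. First I would record the elementary identities that come from the fact that $A_h$, $A_z$, $A$, $\partial_z$, $P_n$, $e^{\tau A_h}$ and $e^{\gamma A_z}$ are all Fourier multipliers and hence commute with one another: taking $s=r$, one has $\|U_n\|_{0,r}=\|e^{\tau A_h}e^{\gamma A_z}V_n\|_{0,r}=\|V_n\|_{\tau,r,\gamma,r}$, $\|A^r U_n\|=\|A^r e^{\tau A_h}e^{\gamma A_z}V_n\|$, $\|A_h^{\frac12}A^r U_n\|=\|A_h^{\frac12}A^r e^{\tau A_h}e^{\gamma A_z}V_n\|$, and, since $\partial_z U_n=e^{\tau A_h}e^{\gamma A_z}\partial_z V_n$, also $\|\partial_z U_n\|_{0,r}=\|\partial_z V_n\|_{\tau,r,\gamma,r}$; likewise $\|U_0\|_{0,r}=\|V_0\|_{\tau_0,r,0,r}$ because $\gamma_0=0$ forces $e^{\gamma_0 A_z}=I$.

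Given these identities, item~(i) is simply estimate \eqref{est:viscous} of Lemma~\ref{lemma:estimate-viscous} rewritten term by term in the $U_n$ variable, so the same constant $C_p(1+\mathbb{E}\|U_0\|_{0,r}^p)e^{C_pT}$ serves. For items~(ii) and~(iii), the key point is that, under $V_n=e^{-\tau A_h}e^{-\gamma A_z}U_n$, the relevant stochastic integral and remainder process are literally the same objects, namely $\int_0^\cdot e^{\tau A_h}e^{\gamma A_z}\sigma_n(e^{-\tau A_h}e^{-\gamma A_z}U_n)\,dW=\int_0^\cdot e^{\tau A_h}e^{\gamma A_z}\sigma_n(V_n)\,dW$ and $U_n-\int_0^\cdot e^{\tau A_h}e^{\gamma A_z}\sigma_n(e^{-\tau A_h}e^{-\gamma A_z}U_n)\,dW=e^{\tau A_h}e^{\gamma A_z}V_n-\int_0^\cdot e^{\tau A_h}e^{\gamma A_z}\sigma_n(V_n)\,dW$; hence parts~(ii) and~(iii) coincide verbatim with parts~(ii) and~(iii) of Lemma~\ref{lemma:estimate-viscous}, and nothing further has to be estimated.

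I do not expect any genuine obstacle here: all the analytic content has already been established in Lemma~\ref{lemma:estimate-viscous}, and this corollary is the bookkeeping statement that will let one work later with the time-independent Sobolev spaces $\mathcal D_{0,r}$, $\mathcal D_{0,r-1}$, $\mathcal D_{0,r-\frac32}$, in which the Aubin--Lions Lemma~\ref{lemma:aubin-lions} is available, rather than with the moving spaces $\mathcal D_{\tau(t),r,\gamma(t),r}$. The only mild points of care are to note that $U_n=e^{\tau A_h}e^{\gamma A_z}V_n$ is indeed the solution of \eqref{PE-viscous-system-galerkin-U} (already verified when that system was derived) and that commuting $e^{\tau A_h}e^{\gamma A_z}$ past $\partial_z$, $A^r$ and $P_n$ is legitimate even at the level of the finite-dimensional Galerkin truncation, which holds because $P_n$ is itself a Fourier-multiplier projection onto $\mathcal E_n$.
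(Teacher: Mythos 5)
Your proof is correct and follows exactly what the paper intends: Corollary~\ref{cor:u-viscous} is stated without proof precisely because it is the verbatim translation of Lemma~\ref{lemma:estimate-viscous} through the Fourier-multiplier change of variables $U_n=e^{\tau A_h}e^{\gamma A_z}V_n$, and you have supplied the bookkeeping identities (including the commutation with $\partial_z$, $A$, $A_h$, $A_z$, $P_n$, and the fact that $\gamma_0=0$ gives $U_0=e^{\tau_0 A_h}V_0$) that make each term match its counterpart. The only pedantic point is that $\|U_n\|_{0,r}$ (the $H^r$ norm) and $\|V_n\|_{\tau,r,\gamma,r}$ are equivalent up to an $r$-dependent constant rather than literally equal, since $(1+|\boldsymbol{k}|^{2r})$ and $(1+|\boldsymbol{k}'|^{2r}+|k_3|^{2r})$ are comparable but not identical; this is absorbed harmlessly into $C_p$ (or more precisely $C_{p,r}$) and does not affect the conclusion.
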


\subsection{Local existence of martingale solution and pathwise uniqueness}\label{section:existence-uniqueness-viscous}

This section follows similar to Sections~\ref{section:existence-inviscid} and \ref{section:uniqueness-inviscid}, once energy estimates are obtained above. Thus we give detail only when necessary.

Recall that when $\tau = \gamma =0$ and $r=s$, $\mathcal D_{0,r,0,s} = \mathcal D_{0,r}$. Given an initial distribution $\mu_0$ satisfying \eqref{eq:mu.zero-viscous} for some $p\geq 4$ (which is implied by \eqref{condition:mu-zero-viscous}), for some stochastic basis $\Sc = \left(\Omega, \Fc, \Fb, \Pb\right)$, let $U_0= e^{\tau_0 A_h} V_0$ be an $\Fc_0$-measurable $\mathcal D_{0,r}$-valued random variable with law $\mu_0$. With the same settings as in Section~\ref{section:existence-inviscid}, we have the following two propositions similar to Proposition~\ref{prop:approximating.sequence} and Proposition~\ref{prop:global.martingale.existence}

\begin{proposition}
\label{prop:approximating.sequence-viscous}
Let $\mu_0$ be a probability measure on $\mathcal D_{0,r}$ satisfying 
\begin{equation*}
    \int_{\mathcal D_{0,r}} \|U\|_{0,r}^p d\mu_0(U)<\infty 
\end{equation*}
with $p\geq4$ and let $(\mu^n)_{n \geq 1}$ be the measures defined in \eqref{eq:mu.measure}. Then there exists a probability space $(\tilde{\Omega}, \tilde{\Fc}, \tilde{\Pb})$, a subsequence $n_k \to \infty$ as $k \to \infty$ and a sequence of $\Xc$-valued random variables $(\tilde{U}_{n_k}, \tilde{W}_{n_k})$ such that
\begin{enumerate}
	\item $(\tilde{U}_{n_k}, \tilde{W}_{n_k})$ converges in $\Xc$ to $(\tilde{U}, \tilde{W}) \in \Xc$ almost surely,
	\item $\tilde{W}_{n_k}$ is a cylindrical Wiener process with reproducing kernel Hilbert space $\Uc$ adapted to the filtration $\left( \Fc_t^{n_k} \right)_{t \geq 0}$, where $\left( \Fc_t^{n_k} \right)_{t \geq 0}$ is the completion of $\sigma(\tilde{W}_{n_k}, \tilde{U}_{n_k}; s \leq t)$, 
	\item for $t\in[0,T]$, each pair $(\tilde{U}_{n_k}, \tilde{W}_{n_k})$ satisfies the equation
	\begin{equation}
	\label{eq:appr.after.skorohod-viscous}
	\begin{split}
	    &d \tilde U_{n_k} + \big[\theta_\rho(\|\tilde U_{n_k}\|_{0,r} ) e^{\tau A_h} e^{\gamma A_z} Q_{n_k}(e^{-\tau A_h} e^{-\gamma A_z}\tilde U_{n_k}, e^{-\tau A_h} e^{-\gamma A_z}\tilde U_{n_k}) 
	    \\
	    &\qquad+ e^{\tau A_h} e^{\gamma A_z}F_{n_k}(e^{-\tau A_h} e^{-\gamma A_z}\tilde U_{n_k}) 
	    -\dot{\tau}A_h \tilde U_{n_k}  - \dot\gamma A_z \tilde U_{n_k} - \nu_z \partial_{zz} \tilde U_{n_k}\big]dt  
	    \\
	    = &e^{\tau A_h} e^{\gamma A_z}\sigma_{n_k} (e^{-\tau A_h} e^{-\gamma A_z}\tilde U_{n_k}) d\tilde W_{n_k}, 
	\end{split}
	\end{equation}
	where $\tau$ and $\gamma$ are defined in \eqref{tau-gamma} and $T$ is defined in \eqref{T-viscous}.
\end{enumerate}
\end{proposition}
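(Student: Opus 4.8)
The plan is to follow verbatim the strategy of the proof of Proposition~\ref{prop:approximating.sequence}, feeding in the vertically viscous energy bounds of Corollary~\ref{cor:u-viscous} in place of Corollary~\ref{cor:u}. As in the inviscid case the essential point is to phrase everything in terms of $U_n=e^{\tau A_h}e^{\gamma A_z}V_n$ rather than $V_n$: the spaces $\mathcal D_{\tau(t),r,\gamma(t),r}$ depend on $t$ through $\tau(t)$ and $\gamma(t)$, whereas $U_n$ takes values in the fixed space $\mathcal D_{0,r}=H^r\cap\mathcal D_0$, which is what makes the Aubin--Lions--Simon Lemma~\ref{lemma:aubin-lions} applicable; this is precisely the reason the Galerkin system is rewritten as \eqref{PE-viscous-system-galerkin-U}. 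Accordingly I set $\Xc_U=L^2(0,T;\mathcal D_{0,r})\cap C([0,T];\mathcal D_{0,r-\frac32})$, $\Xc_W=C([0,T];\Uc_0)$, $\Xc=\Xc_U\times\Xc_W$, and let $\mu^n_U$, $\mu^n_W$, $\mu^n=\mu^n_U\otimes\mu^n_W$ be the laws of $U_n$, $W$, $(U_n,W)$ on the respective spaces, exactly as in \eqref{eq:mu.measure}.

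\emph{Step 1 (tightness).} Corollary~\ref{cor:u-viscous}(i) gives a uniform bound for $U_n$ in $L^p(\Omega;L^\infty(0,T;\mathcal D_{0,r}))$ together with $\sup_n\Eb\int_0^T\|A^rU_n\|^{p-2}\big(\nu_z\|\partial_zU_n\|_{0,r}^2+\|A_h^{\frac12}A^rU_n\|^2\big)\,ds<\infty$. Since $k_3\in2\pi\Z$ forces $|k_3|^2\ge|k_3|$ whenever $k_3\neq0$, one has $|\bk|^{2r}\big(|\bk'|+|k_3|^2\big)\ge|\bk|^{2r+1}$ for all $\bk\in2\pi\Z^3$, so the half horizontal gain from the analytic framework and the full vertical gain from $\nu_z>0$ combine into a genuine half-derivative gain and place $U_n$ uniformly in $L^2(\Omega;L^2(0,T;\mathcal D_{0,r+\frac12}))$. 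Hence $U_n$ enjoys exactly the same two bounds as in the inviscid case, namely uniform boundedness in $L^2(\Omega;L^2(0,T;\mathcal D_{0,r+\frac12}))\cap L^p(\Omega;L^\infty(0,T;\mathcal D_{0,r}))$, while Corollary~\ref{cor:u-viscous}(ii)--(iii) provide, for $\alpha\in[0,1/2)$ and $p\ge4$, that $\int_0^\cdot e^{\tau A_h}e^{\gamma A_z}\sigma_n(\cdot)\,dW$ is bounded in $L^p(\Omega;W^{\alpha,p}(0,T;\mathcal D_{0,r-1}))$ and $U_n-\int_0^\cdot e^{\tau A_h}e^{\gamma A_z}\sigma_n(\cdot)\,dW$ in $L^2(\Omega;W^{1,2}(0,T;\mathcal D_{0,r-1}))$. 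With these ingredients, tightness of $\{\mu^n\}_{n\ge1}$ on $\Xc$ follows from Lemma~\ref{lemma:aubin-lions} by the argument of \cite[Lemma~4.1]{debussche2011local}, with $D(A),V,H,V'$ there replaced by $\mathcal D_{0,r+\frac12},\mathcal D_{0,r},\mathcal D_{0,r-1},\mathcal D_{0,r-\frac32}$ — verbatim as in the proof of Proposition~\ref{prop:approximating.sequence}. Prokhorov's theorem then yields weak compactness, and the Skorokhod representation theorem (\cite[Theorem~2.4]{da2014stochastic}) produces a probability space $(\tilde\Omega,\tilde\Fc,\tilde\Pb)$, a subsequence $n_k\to\infty$, and $\Xc$-valued random variables $(\tilde U_{n_k},\tilde W_{n_k})$ with the law of $(U_{n_k},W)$ converging $\tilde\Pb$-a.s.\ in $\Xc$ to some $(\tilde U,\tilde W)\in\Xc$; this is assertion (i).

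\emph{Steps 2 and 3 (Wiener property and the equation).} Assertion (ii), that $\tilde W_{n_k}$ is a cylindrical Wiener process with RKHS $\Uc$ adapted to the completion of $\sigma(\tilde W_{n_k},\tilde U_{n_k};s\le t)$, follows from the equality of laws together with a standard L\'evy-characterization/martingale argument, exactly as in \cite[Proposition~3.2]{brzezniak2021well} (see also \cite[Section~4.3.4]{bensoussan1995stochastic}); the extra terms $-\nu_z\partial_{zz}U_n$ and $-\dot\gamma A_zU_n$ in \eqref{PE-viscous-galerkin-U-1} play no role here. For assertion (iii), for each fixed $n_k$ the solution map of the finite-dimensional system \eqref{PE-viscous-system-galerkin-U} expresses \eqref{eq:appr.after.skorohod-viscous} as the vanishing of a certain $\mathcal D_{0,r-1}$-valued continuous functional of the pair of paths $(U_{n_k},W)$, $\Pb$-a.s.; since $(\tilde U_{n_k},\tilde W_{n_k})$ has the same law as $(U_{n_k},W)$ and the stochastic integral against $\tilde W_{n_k}$ is recovered from the joint law in the new basis (again \cite{bensoussan1995stochastic,brzezniak2021well}), the same functional vanishes $\tilde\Pb$-a.s., which is \eqref{eq:appr.after.skorohod-viscous}.

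\emph{Main obstacle.} All the genuine analytic work is already packaged into Corollary~\ref{cor:u-viscous}; the one point in the above that deserves attention is Step~1, i.e.\ verifying that the vertically viscous bounds still land $U_n$ in a space compactly embedded in $\mathcal D_{0,r}$, and in fact in the \emph{same} interpolation scale used in the inviscid proof. This works because the vertical viscosity supplies a full $z$-derivative while the analytic framework supplies a half horizontal derivative, and these combine — via the elementary Fourier inequality noted above — into a half-derivative gain in $H^{r+1/2}$; once this is recorded, the remainder of the argument is identical to that of Proposition~\ref{prop:approximating.sequence} and introduces no new ideas.
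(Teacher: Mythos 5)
Your proposal is correct and follows the same strategy as the paper, which simply notes that the proof is almost identical to that of Proposition~\ref{prop:approximating.sequence} (inviscid case) and omits the details. The one point you verify explicitly — that the viscous energy bound of Corollary~\ref{cor:u-viscous}(i), combining $\|A_h^{1/2}A^rU_n\|^2$ with $\nu_z\|\partial_zU_n\|_{0,r}^2$, still controls $\|A^{r+1/2}U_n\|^2$ via the elementary Fourier inequality $|\bk|^{2r}(|\bk'|+|k_3|^2)\geq|\bk|^{2r+1}$ on $2\pi\mathbb Z^3$ — is precisely the gap the paper leaves implicit, and your verification of it is sound.
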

The proof of Proposition~\ref{prop:approximating.sequence-viscous} is almost identical to that of Proposition~\ref{prop:approximating.sequence}, so we omit the details.

\begin{proposition}
\label{prop:global.martingale.existence-viscous}
For $\tau(t), \gamma(t)$ defined in \eqref{tau-gamma} and $T$ defined in \eqref{T-viscous}, let $(\tilde{U}_{n_k}, \tilde{W}_{n_k})$ be a sequence of $\Xc$-valued random variables on a probability space $(\tilde{\Omega}, \tilde{\Fc}, \tilde{\Pb})$ such that
\begin{enumerate}
	\item $(\tilde{U}_{n_k}, \tilde{W}_{n_k}) \to (\tilde{U}, \tilde{W})$ in the topology of $\Xc$, $\tilde{\Pb}$-almost surely, that is,
	\begin{equation*}
		\tilde{U}_{n_k} \to \tilde{U} \ \text{in} \ L^2\left(0, T; \mathcal D_{0,r}\right) \cap C\left(\left[0, T \right], \mathcal D_{0,r-\frac32}\right), \ \tilde{W}_{n_k} \to \tilde{W} \ \text{in} \ C\left(\left[0, T\right]; \Uc_0 \right), 
	\end{equation*}
	\item $\tilde{W}_{n_k}$ is a cylindrical Wiener process with reproducing kernel Hilbert space $\Uc$ adapted to the filtration $\left( \Fc_t^{n_k} \right)_{t \geq 0}$ that contains $\sigma(\tilde{W}_{n_k}, \tilde{U}_{n_k}; s \leq t)$,
	\item each pair $(\tilde{U}_{n_k}, \tilde{W}_{n_k})$ satisfies \eqref{eq:appr.after.skorohod-viscous}.
\end{enumerate}
Let 
\begin{equation*}
    \tilde V_{n_k} = e^{-\tau A_h} e^{-\gamma A_z} \tilde U_{n_k}, \ \ \tilde V = e^{-\tau A_h} e^{-\gamma A_z} \tilde U,
\end{equation*}
and let $\tilde{\Fc}_t$ be the completion of $\sigma(\tilde{W}(s), \tilde{V}(s), 0 \leq s \leq t)$ and $\tilde{\Sc} = (\tilde{\Omega}, \tilde{\Fc}, ( \tilde{\Fc}_t )_{t \geq 0}, \tilde{\Pb})$. Then $(\tilde{\Sc}, \tilde{W}, \tilde{V},T)$ is a local martingale solution to the modified system \eqref{PE-viscous-system-modified} on the time interval $[0,T]$. Moreover, the solution $\tilde{V}$ satisfies
\begin{equation}
	\label{eq:martingale.solution.approx.regularity-viscous}
	\begin{split}
	    &\tilde{V} \in L^2\left( \tilde \Omega; C\left( [0, T]; \mathcal D_{\tau(t),r,\gamma(t),r} \right) \right), 
	    \\
	    &\left(\|A_h^{\frac12} A^r e^{\tau(t)A_h } e^{\gamma(t) A_z} \tilde{V} \|^2 + \|A_z A^r e^{\tau(t)A_h } e^{\gamma(t) A_z} \tilde{V} \|^2 \right)   \|A^{r} e^{\tau(t)A_h } e^{\gamma(t) A_z} \tilde{V}\|^{p-2}  \in L^1\left(\tilde \Omega; L^1\left( 0, T \right) \right).
	\end{split}
\end{equation}
\end{proposition}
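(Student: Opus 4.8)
The plan is to follow the proof of Proposition~\ref{prop:global.martingale.existence} almost verbatim, carrying along the extra vertical viscosity $-\nu_z\partial_{zz}V$ and the second analytic radius $\gamma(t)$. First, since $\tilde V_{n_k}=e^{-\tau A_h}e^{-\gamma A_z}\tilde U_{n_k}$ and each $\tilde U_{n_k}$ solves \eqref{eq:appr.after.skorohod-viscous}, the identity $dU_{n}=(\dot\tau A_hU_{n}+\dot\gamma A_zU_{n})\,dt+e^{\tau A_h}e^{\gamma A_z}dV_{n}$ used in Section~\ref{section:viscous} shows that $\tilde V_{n_k}$ solves the Galerkin form of \eqref{PE-viscous-system-modified}, hence for every $\phi\in\mathcal D_0$ and $t\in[0,T]$,
\[
\langle\tilde V_{n_k}(t),\phi\rangle+\int_0^t\langle\theta_\rho(\|\tilde V_{n_k}\|_{\tau,r,\gamma,r})Q_{n_k}(\tilde V_{n_k},\tilde V_{n_k})+F_{n_k}(\tilde V_{n_k})-\nu_z\partial_{zz}\tilde V_{n_k},\phi\rangle\,ds=\langle\tilde V_{n_k}(0),\phi\rangle+\int_0^t\langle\sigma_{n_k}(\tilde V_{n_k}),\phi\rangle\,d\tilde W_{n_k}.
\]
From assumption (i), $\tilde V_{n_k}\to\tilde V$ in $L^2(0,T;\mathcal D_{\tau(t),r,\gamma(t),r})\cap C([0,T];\mathcal D_{\tau(t),r-\frac32,\gamma(t),r-\frac32})$ $\tilde\Pb$-a.s.; combining Corollary~\ref{cor:u-viscous} with Banach--Alaoglu as in \cite[Section~7.1]{debussche2011local} yields the analogue of \eqref{bound:v-and-vtilde}, in particular $A^r\tilde V\in L^2(\tilde\Omega;L^2(0,T;\mathcal D_{\tau(t),\frac12,\gamma(t),1}))$ and $\tilde V\in L^2(\tilde\Omega;L^\infty(0,T;\mathcal D_{\tau(t),r,\gamma(t),r}))$, together with the corresponding weak/weak-$*$ convergences; and, via Vitali and the uniform integrability from Lemma~\ref{lemma:estimate-viscous}, strong convergence $\tilde V_{n_k}\to\tilde V$ in $L^2(\tilde\Omega;L^2(0,T;\mathcal D_{\tau(t),r,\gamma(t),r}))$ and hence a.e.\ convergence on $(0,T)\times\tilde\Omega$ as in \eqref{convergence:Vnk-pw}.

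Second, I would pass to the limit term by term in the weak identity. The initial datum and the linear term $F_{n_k}$ are handled verbatim as in Proposition~\ref{prop:global.martingale.existence}, using \eqref{force-viscous} and $P_{n_k}\phi\to\phi$. The new viscous term $-\nu_z\partial_{zz}$ is linear and passes to the limit using the weak convergence $\partial_{zz}\tilde V_{n_k}\rightharpoonup\partial_{zz}\tilde V$ that follows from the vertical-derivative bound in Lemma~\ref{lemma:estimate-viscous}(i) (first for smooth $\phi$, then for all $\phi\in\mathcal D_0$ by density together with the $\mathcal D_{\tau(t),r,\gamma(t),r}$-regularity of $\tilde V$). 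For the nonlinearity I split $\theta_\rho(\|\tilde V_{n_k}\|_{\tau,r,\gamma,r})Q_{n_k}(\tilde V_{n_k},\tilde V_{n_k})-\theta_\rho(\|\tilde V\|_{\tau,r,\gamma,r})Q(\tilde V,\tilde V)$ into the same three pieces $I_1,I_2,I_3$ as in the inviscid proof, estimating $I_1,I_2$ by bilinearity of $Q$ and H\"older/Sobolev in $\mathcal D_{0,2}$ via the a.e.\ convergence, and $I_3$ by continuity of $\theta_\rho$, the $\mathcal D_{0,2}$ bound, and dominated convergence; the cut-off keeps every integrand dominated. For the stochastic term I show $\sigma_{n_k}(\tilde V_{n_k})\to\sigma(\tilde V)$ in $L^2(0,T;L_2(\Uc,\mathcal D_0))$ $\tilde\Pb$-a.s.\ from the Lipschitz bound in \eqref{noise-viscous} (the $\delta^2$-term absorbed by the additional vertical regularity of $\tilde V_{n_k}$ from Lemma~\ref{lemma:estimate-viscous}) and the Poincar\'e inequality \eqref{poincare} for the projection error, then invoke \cite[Lemma~2.1]{debussche2011local}, the Burkholder inequality \eqref{eq:bdg}, and Vitali to get $\int_0^\cdot\sigma_{n_k}(\tilde V_{n_k})\,d\tilde W_{n_k}\to\int_0^\cdot\sigma(\tilde V)\,d\tilde W$ in $L^2(\tilde\Omega;L^2(0,T;\mathcal D_0))$. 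Letting $k\to\infty$ identifies, for a.a.\ $(t,\omega)$, the weak form of \eqref{PE-viscous-system-modified}, so $(\tilde\Sc,\tilde W,\tilde V,T)$ is a local martingale solution.

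Third, for the regularity \eqref{eq:martingale.solution.approx.regularity-viscous}: from \eqref{noise-viscous} and the bounds above, $\sigma(\tilde V)\in L^2(\tilde\Omega;L^2(0,T;L_2(\Uc,\mathcal D_{\tau(t),r,\gamma(t),r})))$, so $Z$ solving $dZ=\sigma(\tilde V)\,d\tilde W$, $Z(0)=\tilde V_0$, lies in $L^2(\tilde\Omega;C([0,T];\mathcal D_{\tau(t),r,\gamma(t),r}))$. For $\bar V=\tilde V-Z$ the deterministic equation $\partial_t\bar V+\theta_\rho(\|\tilde V\|_{\tau,r,\gamma,r})Q(\tilde V,\tilde V)+F(\tilde V)-\nu_z\partial_{zz}\tilde V=0$, $\bar V(0)=0$, holds $\tilde\Pb$-a.s.; reading off $\frac{d}{dt}\|A^re^{\tau A_h}e^{\gamma A_z}\bar V\|^2$ and absorbing the commutator terms $\dot\tau A_h,\dot\gamma A_z$ into the dissipation $\nu_z\|A_zA^re^{\tau A_h}e^{\gamma A_z}\bar V\|^2$ exactly as in \eqref{est:1-viscous} (using $\dot\gamma=\tfrac{\nu_z}{8}$, Lemma~\ref{lemma-viscous-type1} for the quadratic term, Lemma~\ref{lemma-banach-algebra-viscous} for $F$), one gets $A^re^{\tau A_h}e^{\gamma A_z}\bar V\in L^2(\tilde\Omega;L^2(0,T;\mathcal D_{\tau,\frac12,\gamma,1}))$ with $\frac{d}{dt}A^re^{\tau A_h}e^{\gamma A_z}\bar V$ in the dual of that space; the Lions--Magenes lemma \cite[Lemma~1.2, Chapter~3]{temam2001navier} in the corresponding Gelfand triple then gives $\bar V\in L^2(\tilde\Omega;C([0,T];\mathcal D_{\tau(t),r,\gamma(t),r}))$, hence $\tilde V\in L^2(\tilde\Omega;C([0,T];\mathcal D_{\tau(t),r,\gamma(t),r}))$, and the second line of \eqref{eq:martingale.solution.approx.regularity-viscous} is inherited from Lemma~\ref{lemma:estimate-viscous}(i) by weak lower semicontinuity of the norms. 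The main obstacle is this last step: pushing the time-continuity argument through a time-dependent \emph{anisotropic} analytic space with two moving radii, while using the vertical viscosity to dominate the commutator $\dot\gamma A_z$ and checking that the $\delta^2$-term in \eqref{noise-viscous} is genuinely absorbable --- this is precisely where the weak dissipation of the analytic framework must interlock with the parabolic smoothing in $z$.
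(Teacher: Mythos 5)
Your outline of the passage to the limit (initial datum, force, nonlinearity via the $I_1,I_2,I_3$ split with the cut-off, viscous term by weak convergence, stochastic term via Poincar\'e, Lipschitz bound, and Vitali) follows the paper's template and is fine. The problem is in the regularity step: you define the auxiliary process $Z$ by $dZ=\sigma(\tilde V)\,d\tilde W$, $Z(0)=\tilde V_0$, whereas the paper defines $Z$ by the stochastic \emph{heat} equation $dZ-\nu_z\partial_{zz}Z=\sigma(\tilde V)\,d\tilde W$, $Z(0)=\tilde V_0$. This is not a cosmetic choice: the vertical heat semigroup is exactly what makes $Z$ analytic in $z$ with the growing radius $\gamma(t)=\tfrac{\nu_z}{8}t$. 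With your plain stochastic integral, $Z(t)=\tilde V_0+\int_0^t\sigma(\tilde V(s))\,d\tilde W(s)$, and $\tilde V_0\in\mathcal D_{\tau_0,r,0,r}$ has \emph{no} $z$-analyticity; moreover the integrand $\sigma(\tilde V(s))$ at time $s<t$ only has vertical radius $\gamma(s)<\gamma(t)$. Hence $e^{\gamma(t)A_z}Z(t)$ is in general not well-defined, the asserted regularity $Z\in L^2(\tilde\Omega;C([0,T];\mathcal D_{\tau(t),r,\gamma(t),r}))$ fails, and so does the subsequent Lions--Magenes argument for $\bar V=\tilde V-Z$. In the paper's version, by Duhamel, $Z(t)=e^{\nu_z t\partial_{zz}}\tilde V_0+\int_0^t e^{\nu_z(t-s)\partial_{zz}}\sigma(\tilde V(s))\,d\tilde W(s)$: the factor $e^{\nu_z(t-s)\partial_{zz}}$ supplies vertical analytic radius $\gtrsim\nu_z(t-s)$, which combined with $\gamma(s)$ gives radius at least $\gamma(t)$ (this is where $\gamma(t)\leq\tfrac18\nu_z t$ is used), and likewise the initial term becomes $z$-analytic for $t>0$.

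A secondary (related) inconsistency: with your decomposition, the deterministic equation for $\bar V$ has the forcing $-\nu_z\partial_{zz}\tilde V$ rather than the dissipative term $-\nu_z\partial_{zz}\bar V$, so the step where you claim to ``absorb the commutator terms $\dot\tau A_h,\dot\gamma A_z$ into the dissipation $\nu_z\|A_zA^re^{\tau A_h}e^{\gamma A_z}\bar V\|^2$'' is not available; there is no such dissipation for $\bar V$. The paper's choice of $Z$ is precisely what produces $-\nu_z\partial_{zz}\bar V$ in the $\bar V$-equation (and, in any case, the paper's argument for time-continuity proceeds via Lions--Magenes from the dual-pairing regularity, not via absorbing the commutator). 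The fix is simply to adopt the paper's $Z$.
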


\begin{proof}[Skecth of proof]
The proof of $(\tilde{\Sc}, \tilde{W}, \tilde{V},T)$ being a local martingale solution to the modified system \eqref{PE-viscous-system-modified} is very similar to the proof in Proposition~\ref{prop:global.martingale.existence}. Heuristically, the main differences between the inviscid case and the vertically viscous case are on the equipped norm and the additional vertical viscosity term. The difference on the norm is that in the viscous case the horizontal and vertical radii of analyticity are different. However, from Lemma~\ref{lemma:estimate-viscous} we see this will not give any trouble. The convergence of the vertical viscosity term is straightforward thanks to the regularity of the solution. 

We will only provide a detailed proof of \eqref{eq:martingale.solution.approx.regularity-viscous}. By the growth condition of $\sigma$ assumed in \eqref{noise-viscous} and the regularity of $\tilde V$ from Lemma~\ref{lemma:estimate-viscous}, we have
\[
	\sigma(\tilde V) \in L^2\left(\tilde{\Omega}; L^2\left(0,T; L_2\left(\Uc, \mathcal D_{\tau(t),r,\gamma(t),r}\right)\right)\right).
\]
Therefore, the solution to
\[
	dZ - \nu_z \partial_{zz} Z  = \sigma(\tilde V) \, d\tilde{W}, \qquad Z(0) = \tilde V_0,
\]
satisfies
\begin{equation}
	\label{eq:z.regularity-viscous}
	Z \in L^2\left(\tilde{\Omega}; C\left([0,T]; \mathcal D_{\tau(t),r,\gamma(t),r}\right)\right), \qquad A^r e^{\tau A_h}e^{\gamma A_z} Z \in L^2\left(\tilde{\Omega}; L^2\left(0,T; \mathcal D_{0,\frac12,0,1}\right)\right).
\end{equation}
Similar to \eqref{bound:v-and-vtilde}, we have the following regularity for $\tilde{V}$:
\begin{equation}\label{bound:v-and-vtilde-viscous}
\begin{split}
    &\tilde{V} \in L^2 \left( \tilde \Omega; L^\infty\big( 0, T; \mathcal D_{\tau(t),r,\gamma(t),r}   \big )  \right), \qquad A^r e^{\tau A_h}e^{\gamma A_z} \tilde{V} \in L^2\left(\tilde{\Omega}; L^2\left(0,T; \mathcal D_{0,\frac12,0,1}\right)\right).
\end{split}
\end{equation}

Defining $\bar{V} = \tilde V - Z$, by \eqref{PE-viscous-modified-1} we have $\Pb$-almost surely
\begin{equation*}
	\tfrac{d}{dt} \bar{V}  + \theta(\| \tilde V  \|_{\tau,r,\gamma,r}) Q(\tilde V, \tilde V) + F(\tilde V) - \nu_z \partial_{zz} \bar{V}= 0, \qquad \bar{V}(0) = 0.
\end{equation*}
Notice that
\begin{equation*}
    \frac{d}{dt} A^{r}e^{\tau(t)A_h} e^{\gamma(t) A_z}\bar{V} = A^{r}e^{\tau(t)A_h} e^{\gamma(t) A_z}  \frac{d}{dt} \bar{V} + \dot{\tau}A_h A^r e^{\tau(t)A_h} e^{\gamma(t) A_z} \bar{V} + \dot{\gamma}A_z A^r e^{\tau(t)A_h} e^{\gamma(t) A_z} \bar{V},
\end{equation*}
thanks to \eqref{eq:z.regularity-viscous}, \eqref{bound:v-and-vtilde-viscous}, and Lemma~\ref{lemma-banach-algebra-viscous}, for arbitrary $\phi\in \mathcal D_{0,\frac12,0,1}$, one has 
\[
 \Big\langle \frac{d}{dt} A^{r}e^{\tau(t)A_h} e^{\gamma(t) A_z}\bar{V}, \phi \Big\rangle \in L^2\left(\tilde{\Omega}; L^2\left(0,T\right)\right).
\]
Therefore,
\[
\frac{d}{dt} A^{r}e^{\tau(t)A_h} e^{\gamma(t) A_z}\bar{V} \in L^2\left(\tilde{\Omega}; L^2\left(0,T; \mathcal D_{0,\frac12,0,1}'\right)\right),
\]
where $\mathcal D_{0,\frac12,0,1}'$ is the dual space of $\mathcal D_{0,\frac12,0,1}$. Since $\mathcal D_{0,\frac12,0,1} \subset \mathcal D_0 \equiv \mathcal D_0' \subset \mathcal D_{0,\frac12,0,1}'$, by the Lions-Magenes Lemma, see, {\it e.g.}, \cite[Lemma 1.2, Chapter 3]{temam2001navier}, we infer that $A^{r}e^{\tau(t)A_h} e^{\gamma(t) A_z} \bar{V} \in L^2\left( \tilde \Omega; C\left(\left[0,T \right]; \mathcal D_0\right)\right)$. Similarly, one can show that $\bar{V} \in L^2\left( \tilde \Omega; C\left(\left[0,T \right]; \mathcal D_0\right)\right)$, and thus $\bar{V} \in L^2\left( \tilde \Omega; C\left(\left[0,T \right]; \mathcal D_{\tau(t),r,\gamma(t),r}\right)\right)$. This together with \eqref{eq:z.regularity-viscous} imply that $\tilde{V} \in L^2\left( \tilde \Omega; C\left(\left[0,T \right]; \mathcal D_{\tau(t),r,\gamma(t),r}\right)\right)$. The second part of \eqref{eq:martingale.solution.approx.regularity-viscous} follows directly from \eqref{bound:v-and-vtilde-viscous}.

\end{proof}

\begin{corollary}
\label{cor:loc.mart.sol-viscous}
Suppose that $\mu_0$ satisfy \eqref{condition:mu-zero-viscous} with constant $M>0$. Let $\rho \geq M$, and let $(\tilde{\Sc}, \tilde{W}, \tilde{V},T)$ be the local martingale solution to system \eqref{PE-viscous-system-modified} given in Proposition~\ref{prop:global.martingale.existence-viscous}. Let
\begin{equation}\label{stopingtime:eta-viscous}
	\eta = \inf \left\lbrace t \geq 0 \mid \| \tilde{V}\|_{\tau,r,\gamma,r} \geq \frac\rho2 \right\rbrace,
\end{equation}
Then $(\tilde{\Sc}, \tilde{W}, \tilde{V}, \eta\wedge T)$ is a local martingale solution to the problem \eqref{PE-viscous-system-abstract}.
Moreover, 
\begin{equation*}
	\begin{split}
	    &\tilde{V}\left( \cdot \wedge \eta \right) \in L^2 \left( \Omega; C\left( [0, T], \mathcal D_{\tau(t),r,\gamma(t),r} \right) \right),
	    \\
	    &\mathds{1}_{[0, \eta]} \left(\|A_h^{\frac12} A^r e^{\tau(t)A_h } e^{\gamma(t) A_z} \tilde{V} \|^2 + \|A_z A^r e^{\tau(t)A_h } e^{\gamma(t) A_z} \tilde{V} \|^2 \right)   
	    \\
	    &\qquad \qquad \qquad\times \|A^{r} e^{\tau(t)A_h } e^{\gamma(t) A_z} \tilde{V}\|^{p-2}\in L^1\left(\tilde \Omega; L^1\left( 0, T \right) \right).
	\end{split}
\end{equation*}
\end{corollary}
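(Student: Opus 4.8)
The plan is to run the localization argument of Corollary~\ref{cor:loc.mart.sol} essentially verbatim; the only new features are the vertical viscosity term $-\nu_z\partial_{zz}\tilde V$ and the fact that the vertical radius $\gamma(t)$ now \emph{increases}, neither of which changes the structure of the proof. First I would record that, by Proposition~\ref{prop:global.martingale.existence-viscous}, $\tilde V\in L^2(\tilde\Omega;C([0,T];\mathcal D_{\tau(t),r,\gamma(t),r}))$, so that $s\mapsto\|\tilde V(s)\|_{\tau,r,\gamma,r}$ is a $\tilde\Pb$-a.s.\ continuous, $\tilde\Fb$-adapted real-valued process; hence $\eta$ in \eqref{stopingtime:eta-viscous}, being the first hitting time of the closed set $[\rho/2,\infty)$, is an $\tilde\Fb$-stopping time, and so is $\eta\wedge T$. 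For positivity: $\tilde V(0)=V_0$ has law $\mu_0$, so by \eqref{condition:mu-zero-viscous} together with $\tau(0)=\tau_0$ and $\gamma(0)=\gamma_0=0$ we have $\|\tilde V(0)\|_{\tau_0,r,0,r}=\|V_0\|_{\tau_0,r,0,r}<M$ $\tilde\Pb$-a.s., and then the continuity of $s\mapsto\|\tilde V(s)\|_{\tau,r,\gamma,r}$ forces $\eta>0$ $\tilde\Pb$-a.s., exactly as in Corollary~\ref{cor:loc.mart.sol} (see also Remark~\ref{remark:initial-condition}).

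Next I would show that on the stochastic interval $[0,\eta]$ the cut-off is inactive. By definition of $\eta$ and continuity one has $\|\tilde V(s)\|_{\tau,r,\gamma,r}\le\rho/2$ for all $s\in[0,\eta]$, hence $\theta_\rho(\|\tilde V(s)\|_{\tau,r,\gamma,r})=1$ there by \eqref{eqn:rho}. Since $(\tilde\Sc,\tilde W,\tilde V,T)$ is a local martingale solution of the modified system \eqref{PE-viscous-system-modified}, $\tilde V$ satisfies, in $\mathcal D_0$ and for every $t\in[0,T]$,
\[
\tilde V(t)+\int_0^t\big[\theta_\rho(\|\tilde V\|_{\tau,r,\gamma,r})\,Q(\tilde V,\tilde V)+F(\tilde V)-\nu_z\partial_{zz}\tilde V\big]\,ds=V_0+\int_0^t\sigma(\tilde V)\,d\tilde W.
\]
Evaluating at $t\wedge\eta$, using $\theta_\rho(\|\tilde V\|_{\tau,r,\gamma,r})\equiv1$ on $[0,\eta]$ and the localization identity $\int_0^{t\wedge\eta}\sigma(\tilde V)\,d\tilde W=\int_0^t\mathds{1}_{[0,\eta]}(s)\,\sigma(\tilde V)\,d\tilde W$, I get
\[
\tilde V(t\wedge\eta)+\int_0^{t\wedge\eta}\big[Q(\tilde V,\tilde V)+F(\tilde V)-\nu_z\partial_{zz}\tilde V\big]\,ds=V_0+\int_0^{t\wedge\eta}\sigma(\tilde V)\,d\tilde W
\]
in $\mathcal D_0$ for all $t\in[0,T]$, which is precisely the identity required in Definition~\ref{definition:viscous-solution} for the original system \eqref{PE-viscous-system-abstract}.

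It then remains to verify the remaining items of Definition~\ref{definition:viscous-solution}, all inherited from Proposition~\ref{prop:global.martingale.existence-viscous} under stopping: $\tilde\Sc$ is a stochastic basis, $\tilde W$ an $\tilde\Fb$-adapted cylindrical Wiener process with reproducing kernel Hilbert space $\Uc$, $\eta$ an $\tilde\Fb$-stopping time with $\eta>0$ a.s., $\tilde V(\cdot\wedge\eta)$ progressively measurable (as $\tilde V$ is continuous and adapted, so is $\tilde V(\cdot\wedge\eta)$), and the law of $\tilde V(0)$ is $\mu_0$. For the regularity, write $\tilde U(t)=e^{\tau(t)A_h}e^{\gamma(t)A_z}\tilde V(t)\in\mathcal D_{0,r}$; the claim $\tilde V(\cdot\wedge\eta)\in L^2(\tilde\Omega;C([0,T];\mathcal D_{\tau(t),r,\gamma(t),r}))$ is the statement $\tilde U(\cdot\wedge\eta)\in L^2(\tilde\Omega;C([0,T];\mathcal D_{0,r}))$, which follows from the first part of \eqref{eq:martingale.solution.approx.regularity-viscous} since $\sup_{t\in[0,T]}\|\tilde U(t\wedge\eta)\|_{0,r}\le\sup_{t\in[0,T]}\|\tilde U(t)\|_{0,r}\in L^2(\tilde\Omega)$; the displayed $L^1(\tilde\Omega;L^1(0,T))$ bound is immediate from the second part of \eqref{eq:martingale.solution.approx.regularity-viscous} because $\mathds{1}_{[0,\eta]}\le1$ pointwise; and $\mathds{1}_{[0,\eta]}(\cdot)A^r\tilde V\in L^2(\tilde\Omega;L^2(0,T;\mathcal D_{\tau(t),\frac12,\gamma(t),1}))$ follows from \eqref{bound:v-and-vtilde-viscous}, which gives the bound even without the indicator once one notes that $A^r e^{\tau A_h}e^{\gamma A_z}\tilde V\in\mathcal D_{0,\frac12,0,1}$ is the same as $A^r\tilde V\in\mathcal D_{\tau(t),\frac12,\gamma(t),1}$.

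I expect the only genuinely delicate points — and the reason hypothesis \eqref{condition:mu-zero-viscous} is imposed — to be the positivity $\eta>0$ a.s.\ and the bookkeeping in the time-dependent spaces $\mathcal D_{\tau(t),r,\gamma(t),r}$. The new feature relative to the inviscid case, that $\gamma(t)$ is increasing rather than decreasing, causes no trouble: the natural interpretation of the stopped process freezes the genuine variable $\tilde U=e^{\tau A_h}e^{\gamma A_z}\tilde V$, which lives in the fixed space $\mathcal D_{0,r}$, at time $\eta$, so the continuity, measurability, and integrability statements transfer unchanged from Proposition~\ref{prop:global.martingale.existence-viscous}, and one never needs to apply $e^{\gamma(t)A_z}$ with $t>\eta$ to the frozen value $\tilde V(\eta)$.
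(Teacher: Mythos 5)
Your proof is correct and is the standard localization argument that the paper itself omits: Corollary~\ref{cor:loc.mart.sol-viscous}, like its inviscid counterpart Corollary~\ref{cor:loc.mart.sol}, is stated without an explicit proof, and the argument you give (cutoff inactive on $[0,\eta]$, evaluate the modified equation at $t\wedge\eta$, inherit the regularity from Proposition~\ref{prop:global.martingale.existence-viscous} using $\mathds{1}_{[0,\eta]}\le 1$) is exactly what is implicitly intended. Your observation about the increasing vertical radius is a genuine subtlety, and your resolution --- read $\tilde V(\cdot\wedge\eta)\in C([0,T];\mathcal D_{\tau(t),r,\gamma(t),r})$ as $\tilde U(\cdot\wedge\eta)\in C([0,T];\mathcal D_{0,r})$, where $\tilde U=e^{\tau A_h}e^{\gamma A_z}\tilde V$ lives in the fixed Sobolev space --- is the right one, since applying $e^{\gamma(t)A_z}$ with $t>\eta$ to the frozen value $\tilde V(\eta)$ would demand more vertical analyticity than is available. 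One small caveat inherited from the paper's own hypotheses rather than from your argument: the deduction $\eta>0$ $\tilde\Pb$-a.s.\ needs $\|V_0\|_{\tau_0,r,0,r}<\rho/2$ a.s., while \eqref{condition:mu-zero-viscous} together with $\rho\ge M$ only gives $\|V_0\|<M\le\rho$; so strictly one needs $\rho\ge 2M$ (or a shift of the thresholds in \eqref{eqn:rho} or \eqref{stopingtime:eta-viscous}) for your final continuity step to go through.
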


Finally, we state the pathwise uniqueness for the original system \eqref{PE-viscous-system-abstract}. 

\begin{proposition}
\label{prop:pathwise.uniqueness-viscous}
Let $\tau(t), \gamma(t)$ and $T$ defined as in \eqref{tau-gamma} and \eqref{T-viscous}, respectively. Suppose that $\sigma$ and $f$ satisfy \eqref{noise-viscous} and \eqref{force-viscous}, respectively, with $\delta$ satisfying a similar smallness condition as \eqref{assumption:delta}. Let $\Sc = \left(\Omega, \Fc, \Fct, \Pb \right)$ and $W$ be fixed. Suppose that there exist two local martingale solutions  $\left(\Sc, W, V^1,T\right)$ and $\left(\Sc, W, V^2,T\right)$ to the modified system \eqref{PE-viscous-system-modified}. Correspondingly,   $\left(\Sc, W, V^1, \eta_1\wedge T\right)$ and $\left(\Sc, W, V^2, \eta_2\wedge T\right)$ are the two local martingale solutions to the original system \eqref{PE-viscous-system-abstract}. Denote $\Omega_0 = \left\lbrace V^1(0) = V^2(0) \right\rbrace \subseteq \Omega$, and  $\eta = \eta_1 \wedge \eta_2$. Then
\begin{equation*}
	\Pb \left( \left\lbrace \mathds{1}_{\Omega_0}\left(V^1(t\wedge \eta) - V^2(t \wedge \eta)\right) = 0 \ \text{for all} \ t \in[0,T] \right\rbrace \right) = 1.
\end{equation*}
\end{proposition}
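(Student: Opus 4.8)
The plan is to mirror the proof of Proposition~\ref{prop:pathwise.uniqueness}, the vertical dissipation $-\nu_z\partial_{zz}$ now playing, in a stronger form, the role that the analytic gain played there. I would set $R=V^1-V^2$, $\bar R=\mathds{1}_{\Omega_0}R$, and introduce
\[
\eta^n=\inf\Big\{t\ge 0:\int_0^t\sum_{i=1}^2\Big(\|V^i\|_{\tau,r,\gamma,r}^2+\|A_h^{\frac12}A^r e^{\tau A_h}e^{\gamma A_z}V^i\|^2+\|\partial_zV^i\|_{\tau,r,\gamma,r}^2\Big)\,ds\ge n\Big\}.
\]
By Lemma~\ref{lemma:estimate-viscous} and the regularity \eqref{eq:martingale.solution.approx.regularity-viscous} inherited by the solutions, the integrand above is $\Pb$-a.s. in $L^1(0,T)$, hence $\lim_{n\to\infty}\eta^n\ge T$ $\Pb$-a.s.; thus it suffices to show $\Eb\sup_{s\in[0,\eta\wedge\eta^n\wedge t]}\|\bar R(s)\|_{\tau,r,\gamma,r}^2=0$ for all $t\in(0,T]$ and $n\in\Nb$. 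Subtracting the two copies of \eqref{PE-viscous-modified-1} and using $\theta_\rho(\|V^i\|_{\tau,r,\gamma,r})=1$ on $[0,\eta]$ (since $\|V^i\|_{\tau,r,\gamma,r}\le\rho/2$ there, by the definition of $\eta_i$), on $[0,\eta]$ the difference $R$ solves
\[
dR+\big[Q(R,V^1)+Q(V^2,R)+f_0R^\perp+\nabla(P^1-P^2)-\nu_z\partial_{zz}R\big]\,dt=\big[\sigma(V^1)-\sigma(V^2)\big]\,dW,
\]
and I would then carry out the energy estimates for $\bar R$, multiplying through by $\mathds{1}_{\Omega_0}$ exactly as in Proposition~\ref{prop:pathwise.uniqueness}.

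For stopping times $0\le\eta_a\le\eta_b\le\eta\wedge\eta^n\wedge T$, I would apply the It\^{o} formula to $\|\bar R\|^2$ and, using the regularity \eqref{bound:v-and-vtilde-viscous} together with the Lions--Magenes lemma as in the inviscid case, to $\|A^r e^{\tau A_h}e^{\gamma A_z}\bar R\|^2$, then take suprema over $[\eta_a,\eta_b]$ and expectations. This produces the good terms $-\nu_z\|A_z A^r e^{\tau A_h}e^{\gamma A_z}\bar R\|^2$ and $\dot\tau\|A_h^{\frac12}A^r e^{\tau A_h}e^{\gamma A_z}\bar R\|^2$ (with $\dot\tau<0$ by \eqref{tau-gamma}), together with the bad term $\dot\gamma\|A_z^{\frac12}A^r e^{\tau A_h}e^{\gamma A_z}\bar R\|^2$; since $|k_3|\le(2\pi)^{-1}|k_3|^2$ for every $k_3\in2\pi\Z$ and $\dot\gamma=\nu_z/8$, this last term is absorbed by the dissipation $\nu_z\|A_z A^r e^{\tau A_h}e^{\gamma A_z}\bar R\|^2$. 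The Coriolis and pressure contributions vanish by orthogonality, as in Proposition~\ref{prop:pathwise.uniqueness}. The nonlinear terms I would estimate via the trilinear bound of Lemma~\ref{lemma-viscous-type1}: after Young's inequality the outer-$V^i$ pieces contribute a coefficient $\lesssim\rho$ times $\|A_h^{\frac12}A^r e^{\tau A_h}e^{\gamma A_z}\bar R\|^2$ (using $\|V^i\|_{\tau,r,\gamma,r}\le\rho/2$ and the cut-off \eqref{eqn:rho}), while the outer-$\bar R$ pieces contribute a small multiple of the good terms plus a Gronwall-type term $C\,\Psi_i(s)\,\|\bar R\|_{\tau,r,\gamma,r}^2$ with $\Psi_i(s)=\|A_h^{\frac12}A^r e^{\tau A_h}e^{\gamma A_z}V^i\|^2+\|\partial_zV^i\|_{\tau,r,\gamma,r}^2$. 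The stochastic terms I would control with \eqref{eq:bdg} and the Lipschitz bound in \eqref{noise-viscous}, whose $C\|\bar R\|_{\tau,r+\frac12,\gamma,r}^2$ part is dominated by $\|\bar R\|_{\tau,r,\gamma,r}^2+\|A_h^{\frac12}A^r e^{\tau A_h}e^{\gamma A_z}\bar R\|^2$ and whose $\delta^2\|\bar R\|_{\tau,r+\frac12,\gamma,r+1}^2$ part also produces $\delta^2\|A_z A^r e^{\tau A_h}e^{\gamma A_z}\bar R\|^2$, absorbed into $\nu_z\|A_z A^r e^{\tau A_h}e^{\gamma A_z}\bar R\|^2$ once $\delta$ is small (of the type \eqref{assumption:delta}, as announced in the statement).

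Collecting these bounds and enlarging the constant $C_{p,r,\nu_z}$ in \eqref{tau-gamma} (hence shrinking $T$ in \eqref{T-viscous}) so that $-\dot\tau$ exceeds the accumulated coefficient of $\|A_h^{\frac12}A^r e^{\tau A_h}e^{\gamma A_z}\bar R\|^2$, I would arrive at
\[
\Eb\sup_{s\in[\eta_a,\eta_b]}\|\bar R(s)\|_{\tau,r,\gamma,r}^2\le C\,\Eb\|\bar R(\eta_a)\|_{\tau,r,\gamma,r}^2+C\,\Eb\int_{\eta_a}^{\eta_b}\Psi(s)\,\|\bar R(s)\|_{\tau,r,\gamma,r}^2\,ds,
\]
where $\Psi(s)=1+\sum_{i=1}^2\big(\|V^i\|_{\tau,r,\gamma,r}^2+\|A_h^{\frac12}A^r e^{\tau A_h}e^{\gamma A_z}V^i\|^2+\|\partial_zV^i\|_{\tau,r,\gamma,r}^2\big)\in L^1(0,\eta^n)$ by construction of $\eta^n$. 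Since $\bar R(0)=0$, the stochastic Gronwall lemma \cite[Lemma~5.3]{glatt2009strong}, applied on $[0,\eta\wedge\eta^n\wedge t]$, forces $\Eb\sup_{s\in[0,\eta\wedge\eta^n\wedge t]}\|\bar R(s)\|_{\tau,r,\gamma,r}^2=0$; letting $n\to\infty$ finishes the proof.

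The main obstacle I foresee is the structural one recorded in Remark~\ref{remark:nonlinear-trouble}: the trilinear bound for $Q(\bar R,V^i)$ inevitably retains the factors $\|A_h^{\frac12}A^r e^{\tau A_h}e^{\gamma A_z}V^i\|$ and $\|\partial_zV^i\|_{\tau,r,\gamma,r}$, which the cut-off $\theta_\rho$ does not control, so one is forced to pass from the modified system to the original one and move these factors into the Gronwall exponent via the stopping time $\eta$ — the $z$-viscosity and the $x'$-analytic gain being, unlike the horizontally viscous settings of \cite{brzezniak2021well,debussche2011local,saal2021stochastic}, too weak to close the estimate on the modified system itself. Beyond this, the only genuinely new bookkeeping relative to the inviscid proof is the balancing of $\dot\gamma\|A_z^{\frac12}A^r e^{\tau A_h}e^{\gamma A_z}\bar R\|^2$ and $\delta^2\|A_z A^r e^{\tau A_h}e^{\gamma A_z}\bar R\|^2$ against the single good term $\nu_z\|A_z A^r e^{\tau A_h}e^{\gamma A_z}\bar R\|^2$, which is routine given the smallness of $\delta$ and the inequality $|k_3|\le(2\pi)^{-1}|k_3|^2$.
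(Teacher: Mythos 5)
Your proposal is correct and follows exactly the route the paper intends: the authors themselves only write ``the proof is very similar to that of Proposition~\ref{prop:pathwise.uniqueness}, and thus we omit it,'' so the substance of the argument is precisely the set of modifications you supply. In particular, you correctly identify the four new ingredients relative to the inviscid case: (a) the stopping time $\eta^n$ must also control $\int_0^t\|\partial_z V^i\|_{\tau,r,\gamma,r}^2\,ds$ (legitimate by Lemma~\ref{lemma:estimate-viscous}, item (i), read at exponent $p=2$, combined with Definition~\ref{definition:viscous-solution}); (b) the trilinear bound is now Lemma~\ref{lemma-viscous-type1}, whose new cross terms produce the factors $\|A_h^{1/2}A^r e^{\tau A_h}e^{\gamma A_z}V^i\|$ and $\|A_z A^r e^{\tau A_h}e^{\gamma A_z}V^i\|$ that go into the Gronwall exponent via $\eta^n$ exactly as $\|A^{r+1/2}e^{\tau A}V^i\|$ did in the inviscid case, while the $\rho$-controlled pieces are absorbed by $-\dot\tau$ after Young's inequality against the vertical dissipation; (c) the positive term $\dot\gamma\|A_z^{1/2}A^r e^{\tau A_h}e^{\gamma A_z}\bar R\|^2$ produced by the growing radius $\gamma$ is absorbed into $\nu_z\|A_z A^r e^{\tau A_h}e^{\gamma A_z}\bar R\|^2$ since $|k_3|\le(2\pi)^{-1}|k_3|^2$ on $2\pi\Z$ and $\dot\gamma=\nu_z/8$; and (d) the $\delta^2\|\bar R\|_{\tau,r+\frac12,\gamma,r+1}^2$ contribution from the Lipschitz estimate \eqref{noise-viscous} requires the smallness of $\delta$ relative to $\nu_z$, matching the ``similar smallness condition as \eqref{assumption:delta}'' in the statement. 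The retuning of $C_{p,r,\nu_z}$ in \eqref{tau-gamma} (with the attendant shrinking of $T$) mirrors the paper's own step in Proposition~\ref{prop:pathwise.uniqueness}, and the stochastic Gronwall lemma then closes the argument on $[0,\eta\wedge\eta^n\wedge t]$. I see no gap.
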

The proof is very similar to that of Proposition~\ref{prop:pathwise.uniqueness}, and thus we omit it.

\section{Conclusive remarks}\label{sec:conclusion}
We prove the local existence of martingale solutions and pathwise uniqueness for the 3D stochastic inviscid primitive equations (PEs, or hydrostatic Euler equations), where we consider a larger class of noises than multiplicative noises, and work in the analytic function space due to the ill-posedness in Sobolev spaces of PEs without horizontal viscosity. By adding vertical viscosity, we can relax the restriction on initial conditions to be only analytic in the horizontal variables with Sobolev regularity in the vertical variable, and allow the transport noise in the vertical direction. Moreover, the solution becomes analytic in $z$ instantaneously as $t>0$ and the analytic radius in $z$ increases as long as the solutions exist. The global in time existence of solutions is not achieved, as the parallel results in the deterministic case are still open in the vertically viscous case, and not true in the inviscid case. The existence of pathwise solutions still remains open, since the nonlinear estimates in the analytic function space are not as good as the horizontal viscosity case in Sobolev spaces. We shall leave it as future work.

\noindent
\section*{Acknowledgments}
R.H.~was partially supported by the NSF grant DMS-1953035, the Faculty Career Development Award, the Research Assistance Program Award, and the
Early Career Faculty Acceleration funding at University of California, Santa Barbara. 

\appendix
\section{Estimates of nonlinear terms}
In this appendix, we provide estimates of nonlinear terms in the analytic function space which has been used throughout the paper. Recall that $Q(f,g) = f\cdot \nabla g -\int_0^z (\nabla \cdot  f) (\boldsymbol{x}', \tilde z)  d\tilde z \, \partial_z g$, and for $r,\tau,s,\gamma \geq 0$,
    \begin{equation*}
    \begin{split}
        &A^r f := \sum\limits_{\bk\in  2\pi\mathbb{Z}^3} |\bk|^r   \hat{f}_{\bk} e^{ i \bk\cdot\bx}, \qquad A_h^r f := \sum\limits_{\bk\in  2\pi\mathbb{Z}^3} |\bk'|^r   \hat{f}_{\bk} e^{ i \bk\cdot\bx}, \qquad A_z^s f := \sum\limits_{\bk\in  2\pi\mathbb{Z}^3} |k_3|^s  \hat{f}_{\bk} e^{ i \bk\cdot\bx},
        \\
     &e^{\tau A} f := \sum\limits_{\bk\in  2\pi\mathbb{Z}^3}  e^{\tau(t)|\bk|}  \hat{f}_{\bk} e^{ i \bk\cdot\bx}, \qquad e^{\tau A_h} e^{\gamma A_z} f := \sum\limits_{\bk\in  2\pi\mathbb{Z}^3}  e^{\tau(t)|\bk'|} e^{\gamma(t)|k_3|}  \hat{f}_{\bk} e^{ i \bk\cdot\bx},
    \end{split}
\end{equation*}
where $\bk = (\bk', k_3)$.

The first lemma is used in the inviscid case.
\begin{lemma}[{\cite[Lemmas A.1 and A.3]{ghoul2022effect}}]\label{lemma-inviscid}
For $f, g, h\in \mathcal{D}_{\tau,r+\frac12}$, where $r>2$ and $\tau\geq 0$, one has
\begin{equation*}\label{lemma-inviscid-inequality}
    \begin{split}
        \Big|\Big\langle A^r e^{\tau A} Q(f,g), A^r e^{\tau A} h  \Big\rangle\Big| \leq  C_r\Big( &\|f\|_{\tau,r} \|A^{r+\frac{1}{2}} e^{\tau A} g\| \|A^{r+\frac{1}{2}} e^{\tau A} h\|  + \|g\|_{\tau,r} \|A^{r+\frac{1}{2}} e^{\tau A} f\| \|A^{r+\frac{1}{2}} e^{\tau A} h\| 
        \\
       &+ \| h\|_{\tau,r} \|A^{r+\frac{1}{2}} e^{\tau A} f\| \|A^{r+\frac{1}{2}} e^{\tau A} g\|\Big).
    \end{split}
\end{equation*}
\end{lemma}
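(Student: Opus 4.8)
The plan is to work entirely on the Fourier side, where the operators $A^r$, $e^{\tau A}$, and the projector onto divergence-type structures all act diagonally. The term $Q(f,g) = f\cdot\nabla g - \left(\int_0^z (\nabla\cdot f)(\tilde z)\,d\tilde z\right)\partial_z g$ is bilinear, so its Fourier coefficients are a convolution: schematically $\widehat{Q(f,g)}_{\bk} = \sum_{\bj+\bl=\bk} m(\bj,\bl)\,\hat f_{\bj}\,\hat g_{\bl}$ for a multiplier $m$ satisfying $|m(\bj,\bl)| \lesssim |\bl|$ (one derivative landing on $g$, with the vertical integration term harmless because dividing by $j_3$ to integrate and then multiplying by $l_3$ for $\partial_z$ only costs $|\bl|$ after using $|\bl|\le|\bk|\le|\bj|+|\bl|$ type bounds; care is needed when $j_3=0$ but then the vertical-integral term is absent or handled by the even/odd symmetry). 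Pairing with $A^r e^{\tau A} h$ gives
\begin{equation*}
    \Big|\big\langle A^r e^{\tau A}Q(f,g),\,A^r e^{\tau A}h\big\rangle\Big|
    \le C\sum_{\bj+\bl=\bk} |\bk|^r\,|\bl|\,e^{\tau|\bk|}\,|\hat f_{\bj}|\,|\hat g_{\bl}|\,|\bk|^r e^{\tau|\bk|}|\hat h_{\bk}|,
\end{equation*}
understood as a sum over $\bk$ as well. The first move is the standard subadditivity of the exponential, $e^{\tau|\bk|}\le e^{\tau|\bj|}e^{\tau|\bl|}$ (valid since $|\bk|\le|\bj|+|\bl|$ and $\tau\ge 0$), which distributes the analytic weight onto $f$ and $g$; this is precisely the ``weak dissipation'' mechanism, and it is what forces the half-derivative losses rather than allowing a clean algebra estimate.

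Next I would split $|\bk|^r \lesssim |\bj|^r + |\bl|^r$ (again from $|\bk|\le|\bj|+|\bl|$, using $r>0$), producing two families of terms. In the family with $|\bj|^r$, the factor on $f$ becomes $|\bj|^r e^{\tau|\bj|}|\hat f_{\bj}|$, i.e.\ an $\dot H^r$-type weight, while $g$ carries only $|\bl|\,e^{\tau|\bl|}|\hat g_{\bl}|$; to create the symmetric half-derivative structure one writes $|\bl| = |\bl|^{1/2}\cdot|\bl|^{1/2}$ and moves one $|\bl|^{1/2}$ onto $h$ via $|\bk|^r \le$ (already used) — more precisely one keeps one $|\bk|^r$ with $h$ and borrows $|\bl|^{1/2}$ from the derivative, leaving $|\bl|^{1/2}$ with $g$ — so $g$ gets $|\bl|^{1/2}e^{\tau|\bl|}|\hat g_{\bl}|$ but is short by $|\bl|^{r-1/2}$. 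That deficit is absorbed by noting $|\bl|^{r-1/2}\le$ (low-high paradigm) the corresponding power already present, or more cleanly by invoking the convolution/Young-type inequality in the form used in \cite{ghoul2022effect}: one of the three factors is measured in the full norm $\|\cdot\|_{\tau,r}$ (which dominates an $\ell^1$-type sum of Fourier coefficients when $r>2$, by Cauchy–Schwarz against $\sum(1+|\bk|)^{-2r}<\infty$ since $2r>4>3$), and the other two in the $\dot H^{r+1/2}$ seminorm $\|A^{r+1/2}e^{\tau A}\cdot\|$. Carrying out the three choices of which factor absorbs the $\ell^1$ summation (via $\|\cdot\|_{\tau,r}$) yields exactly the three terms on the right-hand side of the claimed inequality.

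Concretely, the key steps in order are: (1) write the pairing as a triple Fourier sum with multiplier bound $|m|\lesssim|\bl|$, being careful with the vertical-integral piece and the $j_3=0$ case; (2) apply $e^{\tau|\bk|}\le e^{\tau|\bj|}e^{\tau|\bl|}$; (3) apply $|\bk|^r\lesssim|\bj|^r+|\bl|^r$ and treat the two resulting sums; (4) in each sum, redistribute the single derivative as $|\bl|=|\bl|^{1/2}|\bl|^{1/2}$ (or $|\bj|^{1/2}|\bj|^{1/2}$ after integration by parts / frequency relabeling in the appropriate regime) to arrange two half-order-gained factors and one factor at order $r$; (5) pull the order-$r$ factor out in $\ell^1$ via Cauchy–Schwarz using $r>2$ (so that $\sum_{\bk\in 2\pi\Z^3}(1+|\bk|^{2r})^{-1}<\infty$), bounding it by $\|\cdot\|_{\tau,r}$, and estimate the remaining convolution by Young's inequality $\ell^2\ast\ell^2\to\ell^\infty$-type or Cauchy–Schwarz to get the product of the two $\|A^{r+1/2}e^{\tau A}\cdot\|$ factors; (6) collect the three symmetric terms. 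Since the statement cites \cite[Lemmas A.1 and A.3]{ghoul2022effect} directly, the honest proposal is that the proof is obtained by quoting those lemmas (possibly after noting that the even-in-$z$/odd-in-$z$ symmetry and the torus setting match their hypotheses); the self-contained argument above is the route one would take if reproving it. The main obstacle is step (1): correctly bounding the multiplier coming from the nonlocal vertical term $-\int_0^z(\nabla\cdot f)\,d\tilde z\,\partial_z g$, since naively dividing by $j_3$ is singular — one resolves this by using that $f$ is even in $z$ (so $\hat f_{\bj}$ with $j_3=0$ corresponds to the $\bx'$-divergence term which carries its own $|\bj'|\le|\bj|$), or equivalently by treating the barotropic ($k_3=0$) and baroclinic modes separately exactly as in the cited reference.
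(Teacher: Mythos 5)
The paper gives no proof of this lemma at all --- the lemma is stated and attributed to \cite[Lemmas A.1 and A.3]{ghoul2022effect}, so your closing remark that the ``honest'' proof is citation does match the paper. The closest thing to an internal proof is the paper's proof of the analogous vertically-viscous estimate (Lemma~\ref{lemma-viscous-type1}, ``Lemma A.2''), which the paper explicitly describes as following the same strategy as the cited reference, and your sketch does align with that proof at the level of broad strokes: Fourier triple sum, exponential subadditivity $e^{\tau|\boldsymbol k|}\le e^{\tau|\boldsymbol j|}e^{\tau|\boldsymbol l|}$, splitting of the polynomial weight among the three frequencies, and absorbing one factor in $\ell^1$ via Cauchy--Schwarz using $r>2$.

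However, step (1) of your sketch contains a genuine error that cannot be patched by the hedges you offer. For the vertical term $-\bigl(\int_0^z\nabla\cdot f\,d\tilde z\bigr)\partial_z g$, the multiplier is $\tfrac{\boldsymbol j'\cdot}{j_3}\,l_3$ (with $\boldsymbol j$ the mode of $f$ and $\boldsymbol l$ the mode of $g$), whose magnitude is $\tfrac{|\boldsymbol j'|}{|j_3|}|l_3|$, \emph{not} $\lesssim|\boldsymbol l|$: the $\nabla\cdot f$ contributes a full horizontal derivative $|\boldsymbol j'|$ on $f$ that is not compensated by the factor $1/|j_3|$, since $|j_3|$ can be as small as $2\pi$ regardless of $|\boldsymbol j'|$. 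Your parenthetical ``dividing by $j_3$ \ldots and then multiplying by $l_3$ \ldots only costs $|\boldsymbol l|$'' simply drops this $|\boldsymbol j'|$. With the multiplier bound corrected, your subsequent bookkeeping (splitting only $|\boldsymbol k|^r$ and one $|\boldsymbol l|^{1/2}$) no longer produces the claimed right-hand side, because the extra $|\boldsymbol j'|$ is precisely what forces the half-derivative gain on $f$ in the second and third terms of the conclusion. The mechanism used in the paper's proof of Lemma~\ref{lemma-viscous-type1} (and in the cited \cite{ghoul2022effect}) is the horizontal half-derivative splitting $|\boldsymbol j'|^{1/2}\le C(|\boldsymbol k'|^{1/2}+|\boldsymbol l'|^{1/2})$, applied after writing $|\boldsymbol j'| = |\boldsymbol j'|^{1/2}\cdot|\boldsymbol j'|^{1/2}$: one factor stays with $f$ (contributing to $\|A^{r+1/2}e^{\tau A}f\|$) and the other is distributed to $g$ or $h$. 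Combined with $|\boldsymbol l|^r\lesssim|\boldsymbol j|^r+|\boldsymbol k|^r$ and Cauchy--Schwarz with $\sum_{\boldsymbol k'}|\boldsymbol k'|^{2-2r}<\infty$ (this is exactly where $r>2$ is used), this yields the stated three-term bound. So the self-contained sketch, as written, would not close; it misidentifies where the $A^{r+\frac12}$ weights on $f$ and the $r>2$ threshold come from.
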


The next lemma is used in the vertically viscous case.
\begin{lemma}\label{lemma-viscous-type1}
For $f, g, h\in \mathcal{D}_{\tau,r+\frac12,\gamma,r} \cap \mathcal{D}_{\tau,r,\gamma,r+1}$, where $r>2$, $\tau,\gamma\geq 0$, and $\int_0^1 \nabla \cdot f(\boldsymbol{x}', z)dz = 0$, one has
\begin{equation*}\label{lemma-viscous-type1-inequality}
    \begin{split}
       &\Big|\Big\langle A^r e^{\tau A_h} e^{\gamma A_z} Q(f,g), A^r e^{\tau A_h} e^{\gamma A_z} h  \Big\rangle\Big|
       \\
       \leq &C_r\Big[\|f\|_{\tau,r,\gamma,r} \|A_h^{\frac12} A^r e^{\tau A_h} e^{\gamma A_z} g\| \|A_h^{\frac12} A^r e^{\tau A_h} e^{\gamma A_z} h\|
     + \|g\|_{\tau,r,\gamma,r} \|A_h^{\frac12} A^r e^{\tau A_h} e^{\gamma A_z} f\|  \|A_h^{\frac12} A^r e^{\tau A_h} e^{\gamma A_z} h\|
     \\
     & \quad+ \|h\|_{\tau,r,\gamma,r} \|A_h^{\frac12} A^r e^{\tau A_h} e^{\gamma A_z} f\|  \|A_h^{\frac12} A^r e^{\tau A_h} e^{\gamma A_z} g\|\Big]
     \\
     &+ C_r \| A^{r} e^{\tau A_h} e^{\gamma A_z} f\| \left(\|A_h^{\frac{1}{2}} A^{r} e^{\tau A_h} e^{\gamma A_z} g\| \|A_z A^{r} e^{\tau A_h} e^{\gamma A_z} h\|+\|A_z A^{r} e^{\tau A_h} e^{\gamma A_z} g\| \|A_h^{\frac{1}{2}} A^{r} e^{\tau A_h} e^{\gamma A_z} h\| \right).
    \end{split}
\end{equation*}
\end{lemma}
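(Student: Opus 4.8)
The plan is to expand $Q(f,g) = f \cdot \nabla g - \bigl(\int_0^z \nabla \cdot f \, d\tilde z\bigr)\partial_z g$ in Fourier series, apply $A^r e^{\tau A_h} e^{\gamma A_z}$, pair with $A^r e^{\tau A_h} e^{\gamma A_z} h$, and then control the resulting triple sums by a Leibniz-rule decomposition on the symbol $|\bk|^r e^{\tau|\bk'|} e^{\gamma|k_3|}$. First I would recall the elementary submultiplicativity inequalities: for $\bk = \bk_1 + \bk_2$ one has $e^{\tau|\bk'|} \le e^{\tau|\bk_1'|} e^{\tau|\bk_2'|}$ and $e^{\gamma|k_3|}\le e^{\gamma|k_{1,3}|}e^{\gamma|k_{2,3}|}$ (since $\gamma\ge 0$), together with $|\bk|^r \le C_r(|\bk_1|^r + |\bk_2|^r)$. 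Using these on each term of $Q$ reduces the problem to bilinear convolution estimates for the ``good'' functions $e^{\tau A_h}e^{\gamma A_z} f$, etc., exactly as in the analytic-space calculus of \cite{ghoul2022effect}. The key structural point — which is why this lemma differs from Lemma~\ref{lemma-inviscid} — is that the horizontal and vertical analytic radii $\tau,\gamma$ are now decoupled, and one cannot simply bound everything by the isotropic seminorm $\|A^{r+\frac12}e^{\tau A}\cdot\|$; instead a half-derivative must be extracted \emph{either} horizontally \emph{or} vertically depending on which factor it lands on.

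The main steps, in order: (1) split $\langle A^r e^{\tau A_h}e^{\gamma A_z}(f\cdot\nabla g), A^r e^{\tau A_h}e^{\gamma A_z}h\rangle$ via the Leibniz decomposition of $|\bk|^r$ into the piece where the derivatives hit $g$ (or $\partial_z g$) and the piece where they hit $f$; for the second piece integrate by parts in $\bx'$ using $\int_0^1\nabla\cdot f\,dz=0$ to move a derivative off $f$ onto $h$, producing the symmetric $\|h\|_{\tau,r,\gamma,r}$-type term. (2) On each resulting trilinear sum, distribute the lost horizontal derivative $|\bk'|^{1/2}$ across two of the three factors (Kato–Ponce / Young-convolution style), using $L^\infty$-type control $\sum_\bk |\bk|^{-(r)}<\infty$ which holds since $r>2$, to absorb the $\ell^1$ summation; this yields the first bracketed group with three $\|A_h^{1/2}A^r\cdots\|$ factors and one low-norm $\|\cdot\|_{\tau,r,\gamma,r}$ factor. (3) Handle the vertical transport term $-(\int_0^z\nabla\cdot f\,d\tilde z)\partial_z g$ separately: the vertical antiderivative is bounded on Fourier side by $|k_{1,3}|^{-1}$ (on the nonzero modes; the zero vertical mode of $\nabla\cdot f$ vanishes by hypothesis), so $\int_0^z\nabla\cdot f$ costs one horizontal derivative but \emph{gains} one vertical derivative, while $\partial_z g$ costs one vertical derivative; balancing these is what forces the last line of the inequality, where one of $g,h$ carries a full $A_z$ and the other carries $A_h^{1/2}$, against $\|A^r e^{\tau A_h}e^{\gamma A_z}f\|$ with no extra derivative. (4) Collect terms and invoke $\|f\|\le\|f\|_{\tau,r,\gamma,r}$ and the norm-equivalence to match the stated right-hand side.

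The hard part will be step (3), the anisotropic bookkeeping for the vertical-velocity term: one must simultaneously track a horizontal half-derivative and a full vertical derivative through a convolution in which the antiderivative operator $\int_0^z$ behaves like $A_z^{-1}$ only on nonzero vertical frequencies, so the frequency-zero-in-$z$ contribution of $\nabla\cdot f$ has to be isolated and shown to vanish (this is where $\int_0^1\nabla\cdot f\,dz=0$ is essential). Careful splitting into the cases ``$f$ high / $g$ high'' and ``comparable'' frequencies, and choosing in each case whether to spend the surplus derivatives on $g$ or on $h$, is the delicate combinatorial core; everything else is the now-standard Fourier/Sobolev product calculus in Gevrey-1 spaces. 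Once step (3) is set up correctly, summing the geometric-type series is routine because $r>2$ guarantees $\bk\mapsto(1+|\bk|)^{-r}\in\ell^2(2\pi\Z^3)$ with room to spare, and Young's inequality closes each estimate.
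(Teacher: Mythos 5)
Your overall strategy matches the paper's: Fourier expansion of $Q(f,g)$, submultiplicativity of the exponential weights, Leibniz-type splitting of the symbol $|\bk|^r$ via $|\boldsymbol l|^r \le C_r(|\boldsymbol j|^r + |\boldsymbol k|^r)$ and $|\boldsymbol j'|^{1/2}\le C(|\boldsymbol k'|^{1/2}+|\boldsymbol l'|^{1/2})$, Cauchy--Schwarz with the $\ell^2$-summable weight $\sum_{\bk}(1+|\bk|)^{1-2r}<\infty$ (hence $r>2$), and, crucially, the hypothesis $\int_0^1\nabla\cdot f\,dz = 0$ used to make the $j_3=0$ mode of the vertical antiderivative vanish. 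Your identification of the delicate anisotropic bookkeeping in the $\bigl(\int_0^z\nabla\cdot f\bigr)\partial_z g$ term as the source of the last line of the inequality is exactly right, and is where the paper spends almost all of its effort (the $f\cdot\nabla g$ part it dispatches by citing the earlier isotropic estimate).

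However, one intermediate claim in your step (1) would not work as written: you propose to handle the case where the derivative budget lands on $f$ by ``integrating by parts in $\bx'$ using $\int_0^1\nabla\cdot f\,dz=0$.'' That hypothesis is a constraint on the vertical average of $\nabla\cdot f$, not on $\nabla\cdot f$ itself, so an $\bx'$-integration by parts of $\int(f\cdot\nabla g)h$ leaves the residual $\int(\nabla\cdot f)gh$ uncontrolled. The paper does not integrate by parts at all; the $\|h\|_{\tau,r,\gamma,r}$-low term emerges directly from the Fourier/Cauchy--Schwarz decomposition (putting the $\ell^1$ weight on whichever factor carries only fractional derivatives). Relatedly, a small slip in step (2): each summand in the first bracketed group carries \emph{two} $\|A_h^{1/2}A^r\cdots\|$ factors and one low-norm factor, not three. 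Neither issue is fatal to the plan---dropping the integration-by-parts detour and going straight through the Fourier estimate yields precisely the paper's $B_1,\dots,B_4$ decomposition---but as literally stated step (1) would fail. Also worth noting: after integrating $\int_0^z$, the lower-limit constant (the $-\sum_{j_3\ne 0}\tfrac{1}{ij_3}\hat f_{\boldsymbol j'j_3}e^{i\boldsymbol j'\cdot\bx'}$ piece) changes the vertical convolution structure (the $j_3$-sum decouples from $k_3+l_3=0$); the paper handles this as a separate term $I_{22}$. Your sketch acknowledges the $j_3=0$ subtlety but not this second branch, which requires the same $\sum_{j_3\ne 0}|j_3|^{-2}$ device but a different frequency-matching constraint.
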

\begin{proof}
The proof is similar to \cite[Lemma A.1 and Lemma A.3]{ghoul2022effect} with some modifications. For completeness, we present it below but provide only necessary details. First, since $Q(f,g) = f\cdot \nabla g -\int_0^z (\nabla \cdot  f) (\tilde z)  d\tilde z \, \partial_z g$, one has
\begin{equation*}
    \begin{split}
       &\Big|\Big\langle A^r e^{\tau A_h} e^{\gamma A_z} Q(f,g), A^r e^{\tau A_h} e^{\gamma A_z} h  \Big\rangle\Big|
       \\
       \leq & \Big|\Big\langle A^r e^{\tau A_h} e^{\gamma A_z} \left(f\cdot \nabla g\right), A^r e^{\tau A_h} e^{\gamma A_z} h  \Big\rangle\Big| + \Big|\Big\langle A^r e^{\tau A_h} e^{\gamma A_z} \left(\int_0^z (\nabla \cdot  f) (\tilde z)  d\tilde z \, \partial_z g\right), A^r e^{\tau A_h} e^{\gamma A_z} h  \Big\rangle\Big|
       \\
       := &I_1 + I_2.
    \end{split}
\end{equation*}

For $I_1$, follows almost exactly the proof of \cite[Lemma A.1]{ghoul2022effect}, we have
\begin{equation*}
\begin{split}
     I_1 \leq &C_r\Big(\|f\|_{\tau,r,\gamma,r} \|A_h^{\frac12} A^r e^{\tau A_h} e^{\gamma A_z} g\| \|A_h^{\frac12} A^r e^{\tau A_h} e^{\gamma A_z} h\|
     \\
     & \quad+ \|g\|_{\tau,r,\gamma,r} \|A_h^{\frac12} A^r e^{\tau A_h} e^{\gamma A_z} f\|  \|A_h^{\frac12} A^r e^{\tau A_h} e^{\gamma A_z} h\|
     \\
     & \quad+ \|h\|_{\tau,r,\gamma,r} \|A_h^{\frac12} A^r e^{\tau A_h} e^{\gamma A_z} f\|  \|A_h^{\frac12} A^r e^{\tau A_h} e^{\gamma A_z} g\|\Big).
\end{split}
\end{equation*}

For $I_2$, we use Fourier representation of $f, g$ and $H := A^r e^{\tau A_h} e^{\gamma A_z} h$, in which we can write
    \begin{eqnarray*}
    &&\hskip-.8in
     f(\boldsymbol{x}) = \sum\limits_{\boldsymbol{j}\in  2\pi\mathbb{Z}^3} \hat{f}_{\boldsymbol{j}} e^{ i\boldsymbol{j}\cdot \boldsymbol{x}}, \quad
    g(\boldsymbol{x}) = \sum\limits_{2\pi\boldsymbol{k}\in \mathbb{Z}^3} \hat{g}_{\boldsymbol{k}} e^{ i\boldsymbol{k}\cdot \boldsymbol{x}}, \quad
     H(\boldsymbol{x}) = \sum\limits_{2\pi\boldsymbol{l}\in \mathbb{Z}^3} |\boldsymbol{l}|^r e^{\tau |\boldsymbol{l}'|} e^{\gamma |l_3|}\hat{h}_{\boldsymbol{l}} e^{ i\boldsymbol{l}\cdot \boldsymbol{x}}. 
    \end{eqnarray*}
Since $\int_0^1 \nabla \cdot f(\boldsymbol{x}', z)dz = 0$, one has
$
    f(\boldsymbol{x}) = \sum\limits_{\substack{\boldsymbol{j}\in \mathbb{Z}^3 \\ j_3 \neq 0}} \hat{f}_{\boldsymbol{j}} e^{2\pi( i\boldsymbol{j}' \cdot \boldsymbol{x}' +  ij_3 z)} 
$, and
\begin{equation*}
  \int_0^z \nabla \cdot f(\boldsymbol{x}',\tilde z) d\tilde z = \sum\limits_{\substack{\boldsymbol{j}\in \mathbb{Z}^3 \\ j_3 \neq 0, \boldsymbol{j}' \neq 0}} \frac{1}{j_3}\boldsymbol{j}'\cdot\hat{f}_{\boldsymbol{j}} e^{2\pi( i\boldsymbol{j}' \cdot \boldsymbol{x}' +  ij_3 z)} - \sum\limits_{\substack{\boldsymbol{j}\in \mathbb{Z}^3 \\ j_3 \neq 0, \boldsymbol{j}' \neq 0}} \frac{1}{j_3}\boldsymbol{j}'\cdot\hat{f}_{\boldsymbol{j}} e^{2\pi i\boldsymbol{j}'\cdot \boldsymbol{x}'}.  
\end{equation*}
Therefore,
\begin{equation*}
    \begin{split}
        I_2 = &\Big|\Big\langle A^r e^{\tau A_h} e^{\gamma A_z} \left(\int_0^z (\nabla \cdot  f) (\tilde z)  d\tilde z \, \partial_z g\right), H  \Big\rangle\Big|
        \\
        \leq & \Big|\Big\langle A^r e^{\tau A_h} e^{\gamma A_z} \Big(\sum\limits_{\substack{\boldsymbol{j}\in \mathbb{Z}^3 \\ j_3 \neq 0, \boldsymbol{j}' \neq 0}} \frac{1}{j_3}\boldsymbol{j}'\cdot\hat{f}_{\boldsymbol{j}} e^{2\pi( i\boldsymbol{j}' \cdot \boldsymbol{x}' +  ij_3 z)} \, \partial_z g\Big), H  \Big\rangle\Big| 
        \\
        &+ \Big|\Big\langle A^r e^{\tau A_h} e^{\gamma A_z} \Big(\sum\limits_{\substack{\boldsymbol{j}\in \mathbb{Z}^3 \\ j_3 \neq 0, \boldsymbol{j}' \neq 0}} \frac{1}{j_3}\boldsymbol{j}'\cdot\hat{f}_{\boldsymbol{j}} e^{2\pi i\boldsymbol{j}'\cdot \boldsymbol{x}'}\, \partial_z g\Big), H  \Big\rangle\Big| := I_{21} + I_{22}.
    \end{split}
\end{equation*}
We estimate $I_{22}$ first. For $\boldsymbol{l} = (\boldsymbol{l}', l_3) = (-\boldsymbol{j}'-\boldsymbol{k}', -k_3)$, by using the inequalities 
\begin{equation*}
    |\boldsymbol{j}'|^{\frac{1}{2}} \leq C(|\boldsymbol{k}'|^{\frac{1}{2}} + |\boldsymbol{l}'|^{\frac{1}{2}}),  \;\;  |\boldsymbol{l}|^r \leq C_r (|\boldsymbol{j}|^r+|\boldsymbol{k}|^r), 
\end{equation*}
one has
\begin{eqnarray*}
&&\hskip-.8in 
I_{22} \leq \sum\limits_{\substack{\boldsymbol{j}'+\boldsymbol{k}'+\boldsymbol{l}'=0\\  k_3+l_3 = 0 \\  j_3, k_3, \boldsymbol{j}' \neq 0}} C_r\frac{1}{|j_3|}|\boldsymbol{j}'||k_3|e^{\tau |\boldsymbol{j}'|} e^{\tau |\boldsymbol{k}'|} e^{\gamma |k_3|} |\hat{f}_{\boldsymbol{j}}||\hat{g}_{\boldsymbol{k}}|(|\boldsymbol{j}|^r+|\boldsymbol{k}|^r)|\boldsymbol{l}|^r e^{\tau |\boldsymbol{l}'|} e^{\gamma |l_3|}|\hat{h}_{\boldsymbol{l}}|\nonumber\\
&&\hskip-.68in 
\leq  \sum\limits_{\substack{\boldsymbol{j}'+\boldsymbol{k}'+\boldsymbol{l}'=0\\  k_3+l_3 = 0 \\  j_3, k_3, \boldsymbol{j}' \neq 0}} C_r \frac{|k_3|}{|j_3|}\Big(|\boldsymbol{k}'|^{\frac{1}{2}} |\boldsymbol{j}'|^{\frac{1}{2}}|\boldsymbol{j}|^{r} |\boldsymbol{l}|^{r} + |\boldsymbol{j}'|^{\frac{1}{2}}|\boldsymbol{j}|^{r} |\boldsymbol{l}'|^{\frac{1}{2}} |\boldsymbol{l}|^{r} + |\boldsymbol{j}'|^{\frac{1}{2}} |\boldsymbol{k}'|^{\frac{1}{2}}|\boldsymbol{k}|^{r} |\boldsymbol{l}|^{r}\nonumber\\
&&\hskip-.38in 
 + |\boldsymbol{j}'|^{\frac{1}{2}}|\boldsymbol{k}|^{r} |\boldsymbol{l}'|^{\frac{1}{2}}|\boldsymbol{l}|^{r} \Big) e^{\tau |\boldsymbol{j}'|} e^{\gamma |j_3|}e^{\tau |\boldsymbol{k}'|} e^{\gamma |k_3|} e^{\tau |\boldsymbol{l}'|} e^{\gamma |l_3|} |\hat{f}_{\boldsymbol{j}}| |\hat{g}_{\boldsymbol{k}}| |\hat{h}_{\boldsymbol{l}}| =: B_1 + B_2 + B_3 + B_4.
\end{eqnarray*}
When $\boldsymbol{k}'\neq 0$, $k_3 \neq 0$ and $r>2$, thanks to the Cauchy–Schwarz inequality, we have
\begin{eqnarray*}
&&\hskip-.48in 
B_1 = \sum\limits_{\substack{\boldsymbol{j}'+\boldsymbol{k}'+\boldsymbol{l}'=0\\  k_3+l_3 = 0 \\  j_3, k_3, \boldsymbol{j}' \neq 0}}  C_r \frac{|k_3|}{|j_3|} |\boldsymbol{k}'|^{\frac{1}{2}} |\boldsymbol{j}'|^{\frac{1}{2}}|\boldsymbol{j}|^{r} |\boldsymbol{l}|^{r} e^{\tau |\boldsymbol{j}'|} e^{\gamma |j_3|}e^{\tau |\boldsymbol{k}'|} e^{\gamma |k_3|} e^{\tau |\boldsymbol{l}'|} e^{\gamma |l_3|} |\hat{f}_{\boldsymbol{j}}| |\hat{g}_{\boldsymbol{k}}| |\hat{h}_{\boldsymbol{l}}| \nonumber \\
&&\hskip-.58in 
= C_r \sum\limits_{\substack{\boldsymbol{k}\in \mathbb{Z}^3 \\ \boldsymbol{k}',k_3\neq 0} } |\boldsymbol{k}'|^{\frac{1}{2}} |k_3| e^{\tau |\boldsymbol{k}'|} e^{\gamma |k_3|} |\hat{g}_{\boldsymbol{k}}|  \sum\limits_{\substack{\boldsymbol{j}\in \mathbb{Z}^3 \\ j_3, \boldsymbol{j}' \neq 0} } \frac{1}{|j_3|} |\boldsymbol{j}'|^{\frac{1}{2}} |\boldsymbol{j}|^{r} e^{\tau |\boldsymbol{j}'|} e^{\gamma |j_3|} |\hat{f}_{\boldsymbol{j}}| 
\\
&&\hskip.58in \times|(\boldsymbol{j}'+\boldsymbol{k}',k_3)|^{r}e^{\tau |\boldsymbol{j}'+\boldsymbol{k}'|} e^{\gamma |k_3|}|\hat{h}_{-(\boldsymbol{j}'+\boldsymbol{k}',k_3)}| \nonumber\\
&&\hskip-.58in 
\leq C_r \sum\limits_{\boldsymbol{k}'\in \mathbb{Z}^2} |\boldsymbol{k}'|^{1-r}  \sum\limits_{k_3\in \Z}  |\boldsymbol{k}'|^{r-\frac12} |k_3| |\hat{g}_{\boldsymbol{k}}| e^{\tau |\boldsymbol{k}'|} e^{\gamma |k_3|} \Big( \sum\limits_{\boldsymbol{j}\in \mathbb{Z}^3} |\boldsymbol{j}'||\boldsymbol{j}|^{2r} e^{2\tau |\boldsymbol{j}'|} e^{2\gamma |j_3|}|\hat{f}_{\boldsymbol{j}}|^2\Big)^{\frac{1}{2}}  \nonumber \\
&&\hskip.58in
\times \Big( \sum\limits_{j_3\neq 0} \frac{1}{|j_3|^2} \sum\limits_{\boldsymbol{j}'\in \mathbb{Z}^2 }  |(\boldsymbol{j}'+\boldsymbol{k}',k_3)|^{2r}e^{2\tau |\boldsymbol{j}'+\boldsymbol{k}'|} e^{2\gamma |k_3|}|\hat{h}_{-(\boldsymbol{j}'+\boldsymbol{k}',k_3)}|^2\Big)^{\frac{1}{2}}  \nonumber\\
&&\hskip-.58in \leq C_r \|A_h^{\frac{1}{2}} A^{r} e^{\tau A_h} e^{\gamma A_z} f\| \sum\limits_{\boldsymbol{k}'\in \mathbb{Z}^2} |\boldsymbol{k}'|^{1-r} \Big(\sum\limits_{k_3\in \Z}  |\boldsymbol{k}'| |\boldsymbol{k}|^{2r} |\hat{g}_{\boldsymbol{k}}|^2 e^{2\tau |\boldsymbol{k}'|} e^{2\gamma |k_3|} \Big)^{\frac{1}{2}}  \nonumber \\
&&\hskip.58in
\times \Big( \sum\limits_{k_3\in\Z} \sum\limits_{\boldsymbol{j}'\in \mathbb{Z}^2 }  |(\boldsymbol{j}'+\boldsymbol{k}',k_3)|^{2r}e^{2\tau |\boldsymbol{j}'+\boldsymbol{k}'|} e^{2\gamma |k_3|} |\hat{h}_{-(\boldsymbol{j}'+\boldsymbol{k}',k_3)}|^2\Big)^{\frac{1}{2}}  \nonumber\\
&&\hskip-.58in \leq C_r \|A_h^{\frac{1}{2}} A^{r} e^{\tau A_h} e^{\gamma A_z} f\| \|A^{r} e^{\tau A_h} e^{\gamma A_z} h\| \Big(\sum\limits_{\boldsymbol{k}'\in \mathbb{Z}^2} |\boldsymbol{k}'|^{2-2r}\Big)^{\frac{1}{2}} 
\\
&&\hskip.58in \times\Big( \sum\limits_{\boldsymbol{k}'\in \mathbb{Z}^2} \sum\limits_{k_3\in \Z}    |\boldsymbol{k}'||\boldsymbol{k}|^{2r} |\hat{g}_{\boldsymbol{k}}|^2 e^{2\tau |\boldsymbol{k}'|} e^{2\gamma |k_3|}\Big)^{\frac{1}{2}}  \nonumber \\
&&\hskip-.58in
\leq C_r \|A_h^{\frac{1}{2}} A^{r} e^{\tau A_h} e^{\gamma A_z} f\| \|A_h^{\frac{1}{2}} A^{r} e^{\tau A_h} e^{\gamma A_z} g\| \|A^{r}  e^{\tau A_h} e^{\gamma A_z} h\|.
\end{eqnarray*}
For $B_2$, since $r >2$ and $k_3\neq 0$, thanks to the Cauchy–Schwarz inequality, we have
\begin{eqnarray*}
&&\hskip-.48in 
B_2 = \sum\limits_{\substack{\boldsymbol{j}'+\boldsymbol{k}'+\boldsymbol{l}'=0\\  k_3+l_3 = 0 \\  j_3, k_3, \boldsymbol{j}' \neq 0}}  C_r \frac{|k_3|}{|j_3|} |\boldsymbol{j}'|^{\frac{1}{2}} |\boldsymbol{l}'|^{\frac{1}{2}}|\boldsymbol{j}|^{r} |\boldsymbol{l}|^{r} e^{\tau |\boldsymbol{j}'|} e^{\gamma |j_3|}e^{\tau |\boldsymbol{k}'|} e^{\gamma |k_3|} e^{\tau |\boldsymbol{l}'|} e^{\gamma |l_3|} |\hat{f}_{\boldsymbol{j}}| |\hat{g}_{\boldsymbol{k}}| |\hat{h}_{\boldsymbol{l}}| \nonumber \\
&&\hskip-.58in 
\leq C_r \sum\limits_{\substack{\boldsymbol{k}\in \mathbb{Z}^3 \\ \boldsymbol{k}',k_3\neq 0} }  |\boldsymbol{k}| e^{\tau |\boldsymbol{k}'|} e^{\gamma |k_3|} |\hat{g}_{\boldsymbol{k}}|  \sum\limits_{\substack{\boldsymbol{j}\in \mathbb{Z}^3 \\ j_3, \boldsymbol{j}' \neq 0} } \frac{1}{|j_3|} |\boldsymbol{j}'|^{\frac{1}{2}} |\boldsymbol{j}|^{r} e^{\tau |\boldsymbol{j}'|} e^{\gamma |j_3|} |\hat{f}_{\boldsymbol{j}}|
\nonumber\\
&&\hskip.58in
\times
|\boldsymbol{j}'+\boldsymbol{k}'|^{\frac12} |(\boldsymbol{j}'+\boldsymbol{k}',k_3)|^{r}e^{\tau |\boldsymbol{j}'+\boldsymbol{k}'|} e^{\gamma |k_3|}|\hat{h}_{-(\boldsymbol{j}'+\boldsymbol{k}',k_3)}| \nonumber\\
&&\hskip-.58in 
\leq C_r \sum\limits_{\boldsymbol{k}'\in \mathbb{Z}^2} |(\boldsymbol{k}',\pm 1)|^{1-r}  \sum\limits_{k_3\in \Z}  |\boldsymbol{k}|^{r} | |\hat{g}_{\boldsymbol{k}}| e^{\tau |\boldsymbol{k}'|} e^{\gamma |k_3|} \Big( \sum\limits_{\boldsymbol{j}\in \mathbb{Z}^3} |\boldsymbol{j}'||\boldsymbol{j}|^{2r} e^{2\tau |\boldsymbol{j}'|} e^{2\gamma |j_3|}|\hat{f}_{\boldsymbol{j}}|^2\Big)^{\frac{1}{2}}  \nonumber \\
&&\hskip-.38in
\times \Big( \sum\limits_{j_3\neq 0} \frac{1}{|j_3|^2} \sum\limits_{\boldsymbol{j}'\in \mathbb{Z}^2 } |\boldsymbol{j}'+\boldsymbol{k}'|  |(\boldsymbol{j}'+\boldsymbol{k}',k_3)|^{2r}e^{2\tau |\boldsymbol{j}'+\boldsymbol{k}'|} e^{2\gamma |k_3|}|\hat{h}_{-(\boldsymbol{j}'+\boldsymbol{k}',k_3)}|^2\Big)^{\frac{1}{2}}  \nonumber\\
&&\hskip-.58in \leq C_r \|A_h^{\frac{1}{2}} A^{r} e^{\tau A_h} e^{\gamma A_z} f\| \sum\limits_{\boldsymbol{k}'\in \mathbb{Z}^2} |(\boldsymbol{k}',\pm 1)|^{1-r} \Big(\sum\limits_{k_3\in \Z} |\boldsymbol{k}|^{2r} |\hat{g}_{\boldsymbol{k}}|^2 e^{2\tau |\boldsymbol{k}'|} e^{2\gamma |k_3|} \Big)^{\frac{1}{2}}  \nonumber \\
&&\hskip-.38in
\times \Big( \sum\limits_{k_3\in\Z} \sum\limits_{\boldsymbol{j}'\in \mathbb{Z}^2 } |\boldsymbol{j}'+\boldsymbol{k}'| |(\boldsymbol{j}'+\boldsymbol{k}',k_3)|^{2r}e^{2\tau |\boldsymbol{j}'+\boldsymbol{k}'|} e^{2\gamma |k_3|} |\hat{h}_{-(\boldsymbol{j}'+\boldsymbol{k}',k_3)}|^2\Big)^{\frac{1}{2}}  \nonumber\\
&&\hskip-.58in \leq C_r \|A_h^{\frac{1}{2}} A^{r} e^{\tau A_h} e^{\gamma A_z} f\| \|A_h^{\frac{1}{2}} A^{r} e^{\tau A_h} e^{\gamma A_z} h\| \Big(\sum\limits_{\boldsymbol{k}'\in \mathbb{Z}^2} |(\boldsymbol{k}',\pm 1)|^{2-2r}\Big)^{\frac{1}{2}} 
\\
&&\hskip.58in
\times\Big( \sum\limits_{\boldsymbol{k}'\in \mathbb{Z}^2} \sum\limits_{k_3\in \Z}    |\boldsymbol{k}|^{2r} |\hat{g}_{\boldsymbol{k}}|^2 e^{2\tau |\boldsymbol{k}'|} e^{2\gamma |k_3|}\Big)^{\frac{1}{2}}  \nonumber \\
&&\hskip-.58in
\leq C_r \|A_h^{\frac{1}{2}} A^{r} e^{\tau A_h} e^{\gamma A_z} f\| \| A^{r} e^{\tau A_h} e^{\gamma A_z} g\| \|A_h^{\frac{1}{2}} A^{r}  e^{\tau A_h} e^{\gamma A_z} h\|,
\end{eqnarray*}
where we have used that $|\boldsymbol{k}|^{1-r} \leq |(\boldsymbol{k}',\pm 1)|^{1-r}$ when $k_3\neq 0$.
For $B_3$, since $r>2$, thanks to the Cauchy–Schwarz inequality, we have
\begin{eqnarray*}
&&\hskip-.48in 
B_3 = \sum\limits_{\substack{\boldsymbol{j}'+\boldsymbol{k}'+\boldsymbol{l}'=0\\  k_3+l_3 = 0 \\  j_3, k_3, \boldsymbol{j}' \neq 0}}  C_r \frac{|k_3|}{|j_3|} |\boldsymbol{k}'|^{\frac{1}{2}} |\boldsymbol{j}'|^{\frac{1}{2}}|\boldsymbol{k}|^{r} |\boldsymbol{l}|^{r} e^{\tau |\boldsymbol{j}'|} e^{\gamma |j_3|}e^{\tau |\boldsymbol{k}'|} e^{\gamma |k_3|} e^{\tau |\boldsymbol{l}'|} e^{\gamma |l_3|} |\hat{f}_{\boldsymbol{j}}| |\hat{g}_{\boldsymbol{k}}| |\hat{h}_{\boldsymbol{l}}| \nonumber \\
&&\hskip-.58in 
= C_r \sum\limits_{\substack{\boldsymbol{j}\in \mathbb{Z}^3 \\ j_3, \boldsymbol{j}'\neq 0} } \frac{1}{|j_3|} |\boldsymbol{j}'|^{\frac{1}{2}}  e^{\tau |\boldsymbol{j}'|} e^{\gamma |j_3|} |\hat{f}_{\boldsymbol{j}}|  \sum\limits_{\substack{\boldsymbol{k}\in \mathbb{Z}^3 \\ k_3, \boldsymbol{k}' \neq 0} }  |\boldsymbol{k}'|^{\frac{1}{2}} |\boldsymbol{k}|^{r} e^{\tau |\boldsymbol{k}'|} e^{\gamma |k_3|} |\hat{g}_{\boldsymbol{k}}| 
\\
&&\hskip.58in \times|(\boldsymbol{j}'+\boldsymbol{k}',k_3)|^{r} |k_3| e^{\tau |\boldsymbol{j}'+\boldsymbol{k}'|} e^{\gamma |k_3|}|\hat{h}_{-(\boldsymbol{j}'+\boldsymbol{k}',k_3)}| \nonumber\\
&&\hskip-.58in 
\leq C_r \sum\limits_{\substack{\boldsymbol{j}\in \mathbb{Z}^3 \\ j_3, \boldsymbol{j}'\neq 0} } |\boldsymbol{j}'|^{\frac12-r}    \frac{1}{|j_3|} |\boldsymbol{j}|^{r} |\hat{f}_{\boldsymbol{j}}| e^{\tau |\boldsymbol{j}'|} e^{\gamma |j_3|} \Big( \sum\limits_{\boldsymbol{k}\in \mathbb{Z}^3} |\boldsymbol{k}'||\boldsymbol{k}|^{2r} e^{2\tau |\boldsymbol{k}'|} e^{2\gamma |k_3|}|\hat{g}_{\boldsymbol{k}}|^2\Big)^{\frac{1}{2}}  \nonumber \\
&&\hskip.58in
\times \Big( \sum\limits_{\boldsymbol{k}\in \mathbb{Z}^3 } |k_3|^2 |(\boldsymbol{j}'+\boldsymbol{k}',k_3)|^{2r}e^{2\tau |\boldsymbol{j}'+\boldsymbol{k}'|} e^{2\gamma |k_3|}|\hat{h}_{-(\boldsymbol{j}'+\boldsymbol{k}',k_3)}|^2\Big)^{\frac{1}{2}}  \nonumber\\
&&\hskip-.58in \leq C_r \|A_h^{\frac{1}{2}} A^{r} e^{\tau A_h} e^{\gamma A_z} g\| \|A_z A^{r} e^{\tau A_h} e^{\gamma A_z} h\| \Big(\sum\limits_{\boldsymbol{j}'\neq 0} \sum\limits_{j_3 \neq 0}  |\boldsymbol{j}'|^{1-2r} \frac1{|j_3|^2}\Big)^{\frac{1}{2}} 
\\
&&\hskip.58in \times\Big( \sum\limits_{\boldsymbol{j}\in \mathbb{Z}^3}   |\boldsymbol{j}|^{2r} |\hat{f}_{\boldsymbol{j}}|^2 e^{2\tau |\boldsymbol{j}'|} e^{2\gamma |j_3|}\Big)^{\frac{1}{2}}  \nonumber \\
&&\hskip-.58in
\leq C_r \| A^{r} e^{\tau A_h} e^{\gamma A_z} f\| \|A_h^{\frac{1}{2}} A^{r} e^{\tau A_h} e^{\gamma A_z} g\| \|A_z A^{r} e^{\tau A_h} e^{\gamma A_z} h\|.
\end{eqnarray*}
The estimate of $B_4$ is similar to that of $B_3$, and one can get 
\begin{equation*}
    B_4 \leq C_r \| A^{r} e^{\tau A_h} e^{\gamma A_z} f\| \|A_z A^{r} e^{\tau A_h} e^{\gamma A_z} g\| \|A_h^{\frac{1}{2}} A^{r} e^{\tau A_h} e^{\gamma A_z} h\|.
\end{equation*}
Therefore, we have
\begin{equation*}
    \begin{split}
        I_{22} \leq &C_r \|A_h^{\frac{1}{2}} A^{r} e^{\tau A_h} e^{\gamma A_z} f\| \left( \|A_h^{\frac{1}{2}} A^{r} e^{\tau A_h} e^{\gamma A_z} g\| \|A^{r}  e^{\tau A_h} e^{\gamma A_z} h\| + \| A^{r} e^{\tau A_h} e^{\gamma A_z} g\| \|A_h^{\frac{1}{2}} A^{r}  e^{\tau A_h} e^{\gamma A_z} h\|\right)
        \\
        + &C_r \| A^{r} e^{\tau A_h} e^{\gamma A_z} f\| \left(\|A_h^{\frac{1}{2}} A^{r} e^{\tau A_h} e^{\gamma A_z} g\| \|A_z A^{r} e^{\tau A_h} e^{\gamma A_z} h\|+\|A_z A^{r} e^{\tau A_h} e^{\gamma A_z} g\| \|A_h^{\frac{1}{2}} A^{r} e^{\tau A_h} e^{\gamma A_z} h\| \right) .
    \end{split}
\end{equation*}
The estimates of $I_{21}$ follow similarly to those of $I_{22}$, and the result is the same. Thus,
\begin{equation*}
    \begin{split}
        I_{2} \leq &C_r \|A_h^{\frac{1}{2}} A^{r} e^{\tau A_h} e^{\gamma A_z} f\| \left( \|A_h^{\frac{1}{2}} A^{r} e^{\tau A_h} e^{\gamma A_z} g\| \|A^{r}  e^{\tau A_h} e^{\gamma A_z} h\| + \| A^{r} e^{\tau A_h} e^{\gamma A_z} g\| \|A_h^{\frac{1}{2}} A^{r}  e^{\tau A_h} e^{\gamma A_z} h\|\right)
        \\
        + &C_r \| A^{r} e^{\tau A_h} e^{\gamma A_z} f\| \left(\|A_h^{\frac{1}{2}} A^{r} e^{\tau A_h} e^{\gamma A_z} g\| \|A_z A^{r} e^{\tau A_h} e^{\gamma A_z} h\|+\|A_z A^{r} e^{\tau A_h} e^{\gamma A_z} g\| \|A_h^{\frac{1}{2}} A^{r} e^{\tau A_h} e^{\gamma A_z} h\| \right).
    \end{split}
\end{equation*}
Combining the estimates of $I_1$ and $I_2$ yields
\begin{equation*}
    \begin{split}
       &\Big|\Big\langle A^r e^{\tau A_h} e^{\gamma A_z} Q(f,g), A^r e^{\tau A_h} e^{\gamma A_z} h  \Big\rangle\Big| 
       \\
       \leq &C_r\Big[\|f\|_{\tau,r,\gamma,r} \|A_h^{\frac12} A^r e^{\tau A_h} e^{\gamma A_z} g\| \|A_h^{\frac12} A^r e^{\tau A_h} e^{\gamma A_z} h\|
     \\
     & \quad+ \|g\|_{\tau,r,\gamma,r} \|A_h^{\frac12} A^r e^{\tau A_h} e^{\gamma A_z} f\|  \|A_h^{\frac12} A^r e^{\tau A_h} e^{\gamma A_z} h\|
     \\
     & \quad+ \|h\|_{\tau,r,\gamma,r} \|A_h^{\frac12} A^r e^{\tau A_h} e^{\gamma A_z} f\|  \|A_h^{\frac12} A^r e^{\tau A_h} e^{\gamma A_z} g\|\Big]
     \\
     &+ C_r \| A^{r} e^{\tau A_h} e^{\gamma A_z} f\| \left(\|A_h^{\frac{1}{2}} A^{r} e^{\tau A_h} e^{\gamma A_z} g\| \|A_z A^{r} e^{\tau A_h} e^{\gamma A_z} h\|+\|A_z A^{r} e^{\tau A_h} e^{\gamma A_z} g\| \|A_h^{\frac{1}{2}} A^{r} e^{\tau A_h} e^{\gamma A_z} h\| \right).
    \end{split}
\end{equation*}

\end{proof}

The next lemma comes from \cite[Lemma 2.1]{foias1989gevrey}, addressing an important property of the space $\mathcal{D}_{\tau,r}$. 
\begin{lemma} \label{lemma-banach-algebra}
If $\tau \geq 0$ and $r>\frac{3}{2}$, then $\mathcal{D}_{\tau,r}$ is a Banach algebra, and for any $f, g\in \mathcal{D}_{\tau,r}$, we have 
\begin{eqnarray*}
\| fg\|_{\tau,r} \leq C_{r}\|  f\|_{\tau,r} \|  g\|_{\tau,r}.
\end{eqnarray*}
For the semi-norm, we also have a similar estimate
\begin{eqnarray*}
\| A^r e^{\tau A} (fg)\| \leq C_{r}  \Big(|\hat{f}_0|+\| A^r e^{\tau A} f\|\Big) \Big(|\hat{g}_0|+ \|  A^r e^{\tau A} g\|\Big).
\end{eqnarray*}
\end{lemma}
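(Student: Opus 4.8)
The plan is to pass to Fourier coefficients and reduce both bounds to Young's convolution inequality on $\ell^1(2\pi\mathbb Z^3)$ and $\ell^2(2\pi\mathbb Z^3)$, the only analytic input being the summability $\sum_{\boldsymbol k\in 2\pi\mathbb Z^3\setminus\{0\}}|\boldsymbol k|^{-2r}<\infty$, which holds exactly when $2r>3$, i.e. $r>\tfrac32$. First I would record the two elementary pointwise facts that drive everything: for $\boldsymbol k=\boldsymbol j+\boldsymbol l$ one has $e^{\tau|\boldsymbol k|}\le e^{\tau|\boldsymbol j|}e^{\tau|\boldsymbol l|}$ by the triangle inequality, and, since $r>1$, $(1+|\boldsymbol k|^r)\le C_r\big((1+|\boldsymbol j|^r)+(1+|\boldsymbol l|^r)\big)$ together with its homogeneous analogue $|\boldsymbol k|^r\le C_r(|\boldsymbol j|^r+|\boldsymbol l|^r)$ (from $(a+b)^r\le 2^{r-1}(a^r+b^r)$).

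Next, writing $\widehat{(fg)}_{\boldsymbol k}=\sum_{\boldsymbol j+\boldsymbol l=\boldsymbol k}\hat f_{\boldsymbol j}\hat g_{\boldsymbol l}$ and inserting these inequalities, I would obtain
\[
(1+|\boldsymbol k|^r)e^{\tau|\boldsymbol k|}\,\big|\widehat{(fg)}_{\boldsymbol k}\big|\ \le\ C_r\sum_{\boldsymbol j+\boldsymbol l=\boldsymbol k}\big(a_{\boldsymbol j}\,\tilde b_{\boldsymbol l}+\tilde a_{\boldsymbol j}\,b_{\boldsymbol l}\big),
\]
where $a_{\boldsymbol j}=(1+|\boldsymbol j|^r)e^{\tau|\boldsymbol j|}|\hat f_{\boldsymbol j}|$, $\tilde a_{\boldsymbol j}=e^{\tau|\boldsymbol j|}|\hat f_{\boldsymbol j}|$, and $b,\tilde b$ are defined analogously from $g$. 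The right-hand side is a sum of two discrete convolutions, so Young's inequality $\|u*v\|_{\ell^2}\le\|u\|_{\ell^2}\|v\|_{\ell^1}$ gives $\|fg\|_{\tau,r}\le C_r\big(\|a\|_{\ell^2}\|\tilde b\|_{\ell^1}+\|\tilde a\|_{\ell^1}\|b\|_{\ell^2}\big)$. Here $\|a\|_{\ell^2}\simeq\|f\|_{\tau,r}$ and $\|b\|_{\ell^2}\simeq\|g\|_{\tau,r}$, while Cauchy--Schwarz and the summability above give $\|\tilde b\|_{\ell^1}\le\big(\sum_{\boldsymbol l}(1+|\boldsymbol l|^r)^{-2}\big)^{1/2}\|g\|_{\tau,r}\le C_r\|g\|_{\tau,r}$ and likewise $\|\tilde a\|_{\ell^1}\le C_r\|f\|_{\tau,r}$; combining these proves the Banach algebra estimate.

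For the semi-norm estimate I would run the same computation with $|\boldsymbol k|^r$ in place of $(1+|\boldsymbol k|^r)$, so that now $a_{\boldsymbol j}=|\boldsymbol j|^r e^{\tau|\boldsymbol j|}|\hat f_{\boldsymbol j}|$ and $\|a\|_{\ell^2}=\|A^re^{\tau A}f\|$. The only new point is that the factor $|\boldsymbol l|^{-r}$ used to bound $\|\tilde b\|_{\ell^1}$ degenerates at $\boldsymbol l=0$, so I would split off that mode: $\|\tilde b\|_{\ell^1}=|\hat g_0|+\sum_{\boldsymbol l\ne 0}e^{\tau|\boldsymbol l|}|\hat g_{\boldsymbol l}|\le|\hat g_0|+\big(\sum_{\boldsymbol l\ne 0}|\boldsymbol l|^{-2r}\big)^{1/2}\|A^re^{\tau A}g\|\le C_r\big(|\hat g_0|+\|A^re^{\tau A}g\|\big)$, which is exactly the shape of the claimed bound; symmetrizing in $f$ and $g$ finishes it.

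I do not expect a real obstacle in this argument; the two things to watch are the dimensional threshold $r>\tfrac32$ (equivalently $2r>3$ on $\mathbb Z^3$), which is precisely what makes the weight $|\boldsymbol k|^{-2r}$ summable and is therefore forced by the proof, and the bookkeeping of the zero Fourier mode in the homogeneous estimate. The identical scheme, carried out with $|\boldsymbol k|^2=|\boldsymbol k'|^2+|k_3|^2$ and the two commuting weights $e^{\tau|\boldsymbol k'|}$, $e^{\gamma|k_3|}$ handled separately, yields the corresponding algebra property of $\mathcal D_{\tau,r,\gamma,r}$ used in the vertically viscous case (Lemma~\ref{lemma-banach-algebra-viscous}).
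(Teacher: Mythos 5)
Your argument is correct, and the technical checkpoints you flag are exactly the right ones: the triangle-inequality factorization $e^{\tau|\boldsymbol k|}\le e^{\tau|\boldsymbol j|}e^{\tau|\boldsymbol l|}$, the elementary $|\boldsymbol k|^r\le 2^{r-1}(|\boldsymbol j|^r+|\boldsymbol l|^r)$ for $r\ge1$, Young's inequality $\ell^1*\ell^2\to\ell^2$ for the discrete convolution, the summability of $|\boldsymbol l|^{-2r}$ over $\mathbb Z^3\setminus\{0\}$ precisely when $r>3/2$, and the separate treatment of the zero mode in the homogeneous estimate (which is where the terms $|\hat f_0|$, $|\hat g_0|$ in the stated bound come from). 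The paper does not present its own proof of this lemma; it is quoted from Foias--Temam \cite[Lemma 2.1]{foias1989gevrey}, whose argument is the same Fourier-convolution scheme you lay out, so your proof is essentially a self-contained reconstruction of the cited result rather than a genuinely different route. The one cosmetic remark is that the paper's norm carries the weight $(1+|\boldsymbol k|^{2r})^{1/2}$ whereas you work with $1+|\boldsymbol k|^r$; these are comparable (with $r$-dependent constants), so nothing is lost, but it is worth stating the equivalence explicitly if you write this up.
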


Similar to Lemma~\ref{lemma-banach-algebra}, one has the follwing lemma concerning the space $\mathcal{D}_{\tau,r,\gamma,r}$.
\begin{lemma} \label{lemma-banach-algebra-viscous}
If $\tau,\gamma \geq 0$ and $r>\frac{3}{2}$, then $\mathcal{D}_{\tau,r,\gamma,r}$ is a Banach algebra, and for any $f, g\in \mathcal{D}_{\tau,r,\gamma,r}$, we have 
\begin{eqnarray*}
\| fg\|_{\tau,r,\gamma,r} \leq C_{r}\|  f\|_{\tau,r,\gamma,r} \|  g\|_{\tau,r,\gamma,r}.
\end{eqnarray*}
For the semi-norm, we also have a similar estimate
\begin{eqnarray*}
\| A^r e^{\tau A_h} e^{\gamma A_z} (fg)\| \leq C_{r}  \Big(|\hat{f}_0|+\| A^r e^{\tau A_h} e^{\gamma A_z} f\|\Big) \Big(|\hat{g}_0|+ \|  A^r e^{\tau A_h} e^{\gamma A_z} g\|\Big).
\end{eqnarray*}
\end{lemma}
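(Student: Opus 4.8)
\textbf{Proof proposal for Lemma~\ref{lemma-banach-algebra-viscous}.}
The plan is to mirror the proof of \cite[Lemma 2.1]{foias1989gevrey} (i.e. Lemma~\ref{lemma-banach-algebra}), but carrying along the two separate exponential weights $e^{\tau|\boldsymbol{k}'|}$ and $e^{\gamma|k_3|}$ and the two polynomial weights $|\boldsymbol{k}'|^r$, $|k_3|^r$. First I would pass to Fourier coefficients: writing $\widehat{(fg)}_{\boldsymbol{k}} = \sum_{\boldsymbol{j}+\boldsymbol{l}=\boldsymbol{k}} \hat f_{\boldsymbol{j}} \hat g_{\boldsymbol{l}}$, the key pointwise inequalities are the triangle inequalities for the weights: for $\boldsymbol{k}=\boldsymbol{j}+\boldsymbol{l}$ we have $|\boldsymbol{k}'| \le |\boldsymbol{j}'|+|\boldsymbol{l}'|$ and $|k_3|\le |j_3|+|l_3|$, hence $e^{\tau|\boldsymbol{k}'|}e^{\gamma|k_3|} \le e^{\tau|\boldsymbol{j}'|}e^{\gamma|j_3|}\,e^{\tau|\boldsymbol{l}'|}e^{\gamma|l_3|}$, and the subadditivity-type bound $(1+|\boldsymbol{k}'|^{2r}+|k_3|^{2r})^{1/2} \le C_r\big[(1+|\boldsymbol{j}'|^{2r}+|j_3|^{2r})^{1/2}+(1+|\boldsymbol{l}'|^{2r}+|l_3|^{2r})^{1/2}\big]$. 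Define the nonnegative sequences $a_{\boldsymbol{j}} := (1+|\boldsymbol{j}'|^{2r}+|j_3|^{2r})^{1/2} e^{\tau|\boldsymbol{j}'|}e^{\gamma|j_3|}|\hat f_{\boldsymbol{j}}|$ and $b_{\boldsymbol{l}}$ analogously for $g$, together with the ``undecorated'' sequences $\alpha_{\boldsymbol{j}} := e^{\tau|\boldsymbol{j}'|}e^{\gamma|j_3|}|\hat f_{\boldsymbol{j}}|$, $\beta_{\boldsymbol{l}}$ for $g$; then $\|f\|_{\tau,r,\gamma,r} = \|a\|_{\ell^2}$, etc.

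The second step is the convolution estimate. From the two displayed inequalities,
\[
(1+|\boldsymbol{k}'|^{2r}+|k_3|^{2r})^{1/2} e^{\tau|\boldsymbol{k}'|}e^{\gamma|k_3|}|\widehat{(fg)}_{\boldsymbol{k}}| \le C_r \sum_{\boldsymbol{j}+\boldsymbol{l}=\boldsymbol{k}} \big(a_{\boldsymbol{j}}\beta_{\boldsymbol{l}} + \alpha_{\boldsymbol{j}} b_{\boldsymbol{l}}\big),
\]
so that $\|fg\|_{\tau,r,\gamma,r} \le C_r\big(\|a * \beta\|_{\ell^2} + \|\alpha * b\|_{\ell^2}\big)$. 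By Young's inequality for convolutions on $\ell^1$–$\ell^2$, $\|a*\beta\|_{\ell^2} \le \|a\|_{\ell^2}\|\beta\|_{\ell^1}$ and $\|\alpha*b\|_{\ell^2} \le \|\alpha\|_{\ell^1}\|b\|_{\ell^2}$. It remains to bound $\|\beta\|_{\ell^1}$ by $\|b\|_{\ell^2}$ (and symmetrically $\|\alpha\|_{\ell^1}$ by $\|a\|_{\ell^2}$): by Cauchy–Schwarz,
\[
\|\beta\|_{\ell^1} = \sum_{\boldsymbol{l}} e^{\tau|\boldsymbol{l}'|}e^{\gamma|l_3|}|\hat g_{\boldsymbol{l}}| \le \Big(\sum_{\boldsymbol{l}} (1+|\boldsymbol{l}'|^{2r}+|l_3|^{2r})^{-1}\Big)^{1/2} \|b\|_{\ell^2},
\]
and the first factor is finite precisely because $r > \tfrac32$ makes $\sum_{\boldsymbol{l}'\in 2\pi\mathbb Z^2}\sum_{l_3\in 2\pi\mathbb Z}(1+|\boldsymbol{l}'|^{2r}+|l_3|^{2r})^{-1}$ converge (compare with $\int_{\mathbb R^3}(1+|\xi|^{2r/\cdot})^{-1}$-type integrals; since each of $|\boldsymbol{l}'|^{2r}$ and $|l_3|^{2r}$ already dominates a $3$-dimensionally summable tail when $r>3/2$). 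Combining the three bounds yields $\|fg\|_{\tau,r,\gamma,r} \le C_r \|f\|_{\tau,r,\gamma,r}\|g\|_{\tau,r,\gamma,r}$.

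For the semi-norm estimate one repeats the argument replacing $(1+|\boldsymbol{k}'|^{2r}+|k_3|^{2r})^{1/2}$ by $|\boldsymbol{k}|^r$ and using $|\boldsymbol{k}|^r \le C_r(|\boldsymbol{j}|^r + |\boldsymbol{l}|^r)$; the only new point is handling the zero mode, which is why the factors $|\hat f_0|$ and $|\hat g_0|$ appear: when $\boldsymbol{j}=0$ (resp. $\boldsymbol{l}=0$) the weight $|\boldsymbol{j}|^r$ vanishes, so one splits $\hat f = \hat f_0 + (\hat f - \hat f_0)$ and estimates the cross terms, getting $\|A^r e^{\tau A_h}e^{\gamma A_z}(fg)\| \le C_r(|\hat f_0| + \|A^r e^{\tau A_h}e^{\gamma A_z} f\|)(|\hat g_0| + \|A^r e^{\tau A_h}e^{\gamma A_z} g\|)$ after again invoking the $\ell^1$–$\ell^2$ Young inequality and the summability of $\sum_{\boldsymbol{l}\neq 0}|\boldsymbol{l}|^{-2r}$ for $r>\tfrac32$. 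The main (and really only) obstacle is verifying the summability of the weight reciprocal over $2\pi(\mathbb Z^2\times\mathbb Z)$ with the \emph{anisotropic} denominator $1+|\boldsymbol{l}'|^{2r}+|l_3|^{2r}$; this is where the hypothesis $r>\tfrac32$ is used, and it requires the elementary observation that $1+|\boldsymbol{l}'|^{2r}+|l_3|^{2r} \gtrsim (1+|\boldsymbol{l}|^2)^{r}$ (up to a dimensional constant, since the max of the three terms controls a power of $|\boldsymbol{l}|^2$), reducing it to the classical isotropic bound. Everything else is the standard Banach-algebra-via-convolution computation, identical in structure to \cite{foias1989gevrey}.
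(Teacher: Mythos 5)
Your proposal is correct and follows exactly the approach the paper intends: the paper offers no proof at all for this lemma, merely citing the analogous isotropic statement of Foias–Temam (Lemma~\ref{lemma-banach-algebra}), and your argument is the standard Fourier-convolution/Young-inequality proof of that result carried out with the anisotropic weight $e^{\tau|\boldsymbol{k}'|}e^{\gamma|k_3|}(1+|\boldsymbol{k}'|^{2r}+|k_3|^{2r})^{1/2}$. The two ingredients you isolate — submultiplicativity of the exponential weights and the summability of $(1+|\boldsymbol{l}'|^{2r}+|l_3|^{2r})^{-1}$, reduced to the isotropic $\sum(1+|\boldsymbol{l}|^2)^{-r}<\infty$ via $a^r+b^r\gtrsim(a+b)^r$ for $r\geq 1$ — are exactly the points that change relative to the isotropic version, and both are handled correctly, as is the zero-mode splitting $f=\hat f_0+f_1$, $g=\hat g_0+g_1$ that produces the $|\hat f_0|$, $|\hat g_0|$ terms in the semi-norm estimate.
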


\bibliographystyle{plain}
\bibliography{Reference}

\end{document}